\newtheorem{Lemma}{Lemma}[section]
\newtheorem{Proposition}[Lemma]{Proposition}
\theoremstyle{definition}
\newtheorem{Theorem}[Lemma]{Theorem}
\newtheorem{Definition}[Lemma]{Definition}
\newtheorem{Fact}[Lemma]{Fact}
\newtheorem{Remark}[Lemma]{Remark}
\newtheorem{Corollary}[Lemma]{Corollary}
\title{Existence of a conjugate point in the incompressible Euler flow on a three-dimensional ellipsoid}
\author{L. A. Lichtenfelz\thanks{
Department of Mathematics, 
Wake Forest University,
127 Manchester Hall Box 7388, Winston-Salem, NC 27109,
America,
E-mail address: lichtel@wfu.edu}
\and
T. Tauchi\thanks{
Institute of Mathematics for Industry, Kyushu University, Nishi-ku, Fukuoka, 819-0395, Japan, E-mail address: tauchi.taito.342@m.kyushu-u.ac.jp} 
\and T. Yoneda\thanks{
Graduate School of Economics, Hitotsubashi University, 2-1
Naka, Kunitachi, Tokyo 186-8601, Japan,
E-mail address: t.yoneda@r.hit-u.ac.jp}}
\date{}
\begin{document}
\maketitle
\begin{abstract}
The existence of 
a conjugate point on the volume-preserving diffeomorphism group
of a compact Riemannian manifold $M$
is related to the Lagrangian stability
of a solution of 
the incompressible Euler equation on $M$.
The Misio{\l}ek curvature 
is a reasonable criterion for the existence 
of a conjugate point on the volume-preserving diffeomorphism group
corresponding to a stationary solution of 
the incompressible Euler equation.
In this article,
we introduce a class of stationary solutions
on an arbitrary Riemannian manifold
whose behavior is nice with respect to 
the Misio{\l}ek curvature
and give a positivity result of the Misio{\l}ek curvature
for solutions belonging to this class.
Moreover,
we also show the 
existence of a conjugate point in the three-dimensional ellipsoid case
as its corollary.
\end{abstract}
{\bf Keywords}: Euler equation, diffeomorphism group, conjugate point, zonal flow.\\
{\bf MSC2020;} Primary 35Q35; Secondary 35Q31.

\section{Main result}
Let $(M,g)$ be a compact $n$-dimensional Riemannian manifold without boundary. 
Consider the incompressible Euler equation on $M$:
\begin{eqnarray}
	\frac{\partial u}{\partial t}+\nabla_{u}u
	&=&
	-\operatorname{grad}p,
	\nonumber\\
	\operatorname{div}u
	&=&
	0,
	\label{E-eq-intro}\\
	u|_{t=0}&=&u_{0}.
	\nonumber
\end{eqnarray}
Let $u$ be a stationary solution of this equation
and $v$ a divergence-free vector field on $M$.
Then,
the Misio{\l}ek curvature ${\mathfrak m}{\mathfrak c}_{u,v}$ is defined by
\begin{equation}\label{mc_criterion}
	{\mathfrak m}{\mathfrak c}_{u,v}
	:=
	-
	\int_{M}
	g([u,v],[u,v])
	\mu
	-
	\int_{M}
	g(u,[[u,v],v])
	\mu,
\end{equation}
where $\mu$ is the volume form on $M$ 
(see \cite[Lem.~B.6]{TY2} or Appendix \ref{Appendix-diffomorphism-and-MC}).
The importance of this functional  
is 
that ${\mathfrak m}{\mathfrak c}_{u,v}>0$ implies 
that the solution $u$
contains a conjugate point when viewed as a geodesic in the group ${\mathcal D}^{s}_{\mu}(M)$ of volume-preserving Sobolev $H^{s}$ diffeomorphisms of $M$ starting at the identity \cite[Fact.~1.1]{TY1}.

The question of whether geodesics in ${\mathcal D}^{s}_{\mu}(M)$ contain conjugate points goes back to Arnold \cite{A}, \cite{AK}. Examples have been found by multiple authors, e.g., \cite{Mstability}, \cite{MconjT2}, \cite{Shnirelman}, \cite{Preston-conj}, \cite{PW}, \cite{Benn-MC}, \cite{Benn-coad}, \cite{Benn-sympl} among others, using several different techniques.

Conjugate points are related to Lagrangian stability of the corresponding flow (\cite{Mstability}, \cite{Pstability}, \cite{TY3},  \cite{Benn-coad}, \cite{DMSY}, \cite{Benn-MC}). They can also be used to obtain detailed local information about the data-to-solution map of the Cauchy problem \eqref{E-eq-intro} in Lagrangian coordinates \cite{MLittauer},  \cite{L}.

Despite the aforementioned efforts, we still lack a good understanding of the nature and structure of the set of conjugate points. This motivates our search for new examples.

The criterion \eqref{mc_criterion} was first used in \cite{MconjT2} by G. Misio{\l}ek and recently attracted attention again \cite{DMSY,TY1,TY2,TY3}. A variant of this criterion was used in \cite{Benn-MC} to construct examples of conjugate points along non-stationary geodesics.

In particular,
the second and third authors considered two-dimensional ellipsoids,
and showed the positivity of the Misio{\l}ek curvature 
corresponding to almost every zonal flow $Z$ on them in \cite{TY1},
motivated by the existence of stable multiple zonal jet flow on Jupiter,
whose mechanism is not yet well understood.

The first aim of this article was to generalize the result of \cite{TY1} to the three-dimensional ellipsoid case.
However, the result of \cite{TY1} concerns zonal flows, which are only defined on two-dimensional spheres  or ellipsoids. Thus, in order to generalize the result of \cite{TY1}, we give a new definition of a zonal flow on an arbitrary Riemannian manifold (cf. Remark \ref{rmk_2} below).

Unexpectedly, this definition behaves nicely with respect to the Misio{\l}ek curvature
and  we  obtain the following criteria for the positivity of the Misio{\l}ek curvature on an arbitrary compact Riemannian manifold. See Definitions \ref{Def-zonal-flow-arbitrary}, \ref{Def-zonal-geodesic-arbitrary}, \ref{Def-zonal-positive},  and \ref{Def-zonal-S1} for the meaning of a non-geodesic positive $S^{1}$-zonal flow.

\begin{Theorem}
\label{Theorem-Main}
	Let $Z$ be a non-geodesic positive $S^{1}$-zonal flow on a compact Riemannian manifold $M$ with $\dim M\geq 3$.
	Then,
	there exists a divergence-free vector field $Y$ on $M$
	satisfying
	$$
	{\mathfrak m}{\mathfrak c}_{Z,Y}>0.
	$$
\end{Theorem}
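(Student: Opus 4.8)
\medskip\noindent\textbf{Proof plan.}
The idea is to use the $S^1$-symmetry twice. First, unwinding Definitions~\ref{Def-zonal-flow-arbitrary}--\ref{Def-zonal-S1}, such a $Z$ is of the form $Z=fK$, where $K$ is the divergence-free Killing field generating the isometric $S^1$-action and $f$ is constant along the $S^1$-orbits; stationarity of $Z$ together with $\nabla_KK=-\tfrac12\operatorname{grad}|K|^2$ forces $f^2=\phi\circ|K|^2$ for a one-variable function $\phi$, the ``non-geodesic'' hypothesis gives $\operatorname{grad}|K|^2\not\equiv 0$ (equivalently, $|K|^2$ is non-constant), and the ``positive'' hypothesis makes $\phi$ strictly increasing on the range of $|K|^2$. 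Second, I would look for the test field $Y$ among the \emph{$S^1$-invariant} divergence-free vector fields, i.e.\ those with $[K,Y]=0$, because the Misio{\l}ek curvature collapses dramatically along such $Y$.

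Indeed, with $Z=fK$ and $[K,Y]=0$ the Leibniz rule for the Lie bracket gives $[Z,Y]=[fK,Y]=-(Yf)K$ and then $[[Z,Y],Y]=\bigl(Y(Yf)\bigr)K$; substituting into \eqref{mc_criterion} and using $g(K,K)=|K|^2$,
\begin{equation*}
{\mathfrak m}{\mathfrak c}_{Z,Y}=-\int_M|K|^2\bigl((Yf)^2+f\,Y(Yf)\bigr)\,\mu=-\frac12\int_M|K|^2\,Y\bigl(Y(f^2)\bigr)\,\mu,
\end{equation*}
and, integrating by parts (legitimate because $\operatorname{div}Y=0$),
\begin{equation*}
{\mathfrak m}{\mathfrak c}_{Z,Y}=\frac12\int_M Y(|K|^2)\,Y(f^2)\,\mu=\frac12\int_M\phi'\bigl(|K|^2\bigr)\bigl(Y(|K|^2)\bigr)^2\,\mu\ \ge\ 0,
\end{equation*}
using $Y(f^2)=\phi'(|K|^2)\,Y(|K|^2)$ in the last step. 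Thus for $S^1$-invariant $Y$ positivity is automatic, and the whole problem reduces to producing a divergence-free $S^1$-invariant $Y$ with $Y(|K|^2)\not\equiv 0$ on the open set $\{\phi'(|K|^2)>0\}$.

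For this I would argue as follows. By the non-geodesic hypothesis $\operatorname{grad}|K|^2\ne 0$ on a nonempty open set, which automatically lies in $\{K\ne 0\}$ since $|K|^2$ vanishes to second order at the zeros of $K$; intersecting with the nonempty $S^1$-invariant set $\{\phi'(|K|^2)>0\}$ yields a nonempty $S^1$-invariant open set $U\subset\{K\ne 0\}$ on which $\operatorname{grad}|K|^2\ne 0$. Here the hypothesis $\dim M\ge 3$ enters: the local orbit space $N$ of $U$ has dimension $\dim M-1\ge 2$, so it carries an abundance of divergence-free vector fields transverse to the level sets of the function $\bar b$ induced by $|K|^2$. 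Concretely, let $\mu_N$ be the volume induced by the submersion metric, choose $\psi\in C^\infty_c(N)$ with $d\psi$ not proportional to $d\bar b$ at some point, and define $W$ on $N$ by $\iota_W\bigl(\sqrt{\bar b}\,\mu_N\bigr)=d\psi$; the horizontal lift $Y$ of $W$, extended by zero to $M$, is smooth, $S^1$-invariant, and divergence-free --- the weight $\sqrt{\bar b}$ being chosen precisely because $\mu$ equals $\sqrt{\bar b}\,d\theta\wedge\pi^{*}\mu_N$ locally --- and $Y(|K|^2)$, the pullback of $(d\bar b\wedge d\psi)/(\sqrt{\bar b}\,\mu_N)$, is not identically zero and is supported in $\{\phi'(|K|^2)>0\}$. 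Hence ${\mathfrak m}{\mathfrak c}_{Z,Y}>0$. (By contrast, when $\dim M=2$ every smooth $S^1$-invariant divergence-free field on a surface of revolution with poles is of the form $\alpha K$ with $\alpha$ $S^1$-invariant, hence annihilates $|K|^2$; this is exactly why the two-dimensional result of \cite{TY1} needs a genuinely different and harder argument.)

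The bracket computation of the second step is routine; I expect the main obstacle to be the last step --- guaranteeing that within the rigid class of $S^1$-invariant divergence-free fields, the only one along which the Misio{\l}ek curvature simplifies, there still lives a test field detecting the variation of $|K|^2$, which is what forces $\dim M\ge 3$ and what demands care near the fixed-point set of the action (circumvented above by localizing on $U$, where $K\ne 0$ and the quotient is smooth). A secondary point to confirm is that Definition~\ref{Def-zonal-positive} really supplies the sign of $\phi'$: this coherence of orientation between the level sets of $f^2$ and of $|K|^2$ is indispensable, since for the reversed orientation $Y(f^2)\,Y(|K|^2)\le 0$ for every $S^1$-invariant $Y$ and no such test field can work.
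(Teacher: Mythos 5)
Your proposal is correct and follows essentially the same route as the paper: your collapse of the Misio{\l}ek curvature along $[X,Y]=0$ to $\tfrac12\int Y(\|X\|^2)\,Y(f^2)\,\mu=\tfrac12\int F\,Y(\|X\|^2)^2\mu$ is exactly Lemmas \ref{Lemma-zonal-MC-formula}--\ref{Lemma-MC-commute-formula} and Corollary \ref{Cor-zonal-MC>0} (which rely on the collinearity $\operatorname{grad}(f^2)=F\operatorname{grad}(\|X\|^2)$ of Lemma \ref{Lemma-zonal-grad-collinear}, your $\phi'$ being the paper's $F$), and your production of an $S^1$-invariant divergence-free $Y$ supported in $U^{+}$ with $Y(\|X\|^2)\not\equiv 0$ via the local orbit space is the content of Lemmas \ref{Lemma-open-S1-emb} and \ref{Lemma-existence-Y-nonzero}, proved there through the principal orbit type theorem and the surjectivity of the evaluation map on compactly supported divergence-free fields of ${\mathbb R}^{k}$, $k\geq 2$. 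The one detail to adjust is your explicit formula $\iota_{W}(\sqrt{\bar b}\,\mu_{N})=d\psi$, which typechecks only when the orbit space is two-dimensional (i.e.\ $\dim M=3$); for $\dim M>3$ take $\iota_{W}(\sqrt{\bar b}\,\mu_{N})$ equal to a closed compactly supported $(\dim M-2)$-form such as $d\psi_{1}\wedge\cdots\wedge d\psi_{\dim M-2}$, after which your argument goes through unchanged.
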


\begin{Remark}\label{rmk_2}
Let $M$ be an arbitrary Riemannian manifold with $\dim M=3$.
	In \cite{LMP-Axi},
	the first author, G. Misio{\l}ek, and S. C. Preston considered the space of axisymmetric vector fields $u$ on $M$, which means that
	$u$ satisfies $[u,X]=0$ for some fixed Killing vector field $X$ on $M$ \cite[Sect. 3]{LMP-Axi}.
	If $Z=fX$ is a zonal flow in the sense of Definition \ref{Def-zonal-flow-arbitrary},
	we have in particular that 
	\begin{eqnarray*}
		[Z,X]=[fX,X]=f[X,X]-X(f)X=0
	\end{eqnarray*}
	by
	Lemma \ref{Lemma-zonal-div=0}.
	This implies that any zonal flow can be regarded as an axisymmetric vector field on $M$.
	By Theorem \ref{Theorem-Main}, 
	any non-geodesic positive $S^{1}$-zonal flow $Z$ on $M$
	has a conjugate point on ${\mathcal D}^{s}_{\mu}(M)$
	by the M-criterion (Fact \ref{Fact-M-criterion})
	if $M$ is compact.

\end{Remark}

As a corollary of theorem \ref{Theorem-Main}, we have the following generalization of the result in \cite{TY1} to the case of three-dimensional ellipsoids. Let
$$
M_{a}^{3}:=\{
(x,y,z,w)\in{\mathbb R}^{4}\mid 
x^{2}+y^{2}=a^{2}(1-z^{2}-w^{2})
\}
$$
for $a>0$.
Define the subsets of $M_{a}^{3}$
by
\begin{eqnarray*}
	N:=
	\{
(x,y,0,0)\in{\mathbb R}^{4}\mid 
x^{2}+y^{2}=a^{2}
\},
\quad
S:=\{
(0,0,z,w)\in{\mathbb R}^{4}\mid 
z^{2}+w^{2}=1
\}.
\end{eqnarray*}
\begin{Theorem}
	\label{Theorem-Main-2}
	Let
	$Z$ be a non-geodesic $S^{1}$-zonal flow on $M_{a}^{3}$
	with $\operatorname{supp}(Z)\subset M^{3}_{a}\backslash (N\cup S)$.
	Then,
	there exists a divergence-free vector field $Y$ on $M_{a}$
	such that 
	$$
	{\mathfrak m}{\mathfrak c}_{Z,Y}>0.
	$$
\end{Theorem}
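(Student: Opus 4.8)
\emph{Proof plan.} Since Theorem \ref{Theorem-Main-2} is advertised as a corollary of Theorem \ref{Theorem-Main}, the plan is simply to check that, under its hypotheses, $Z$ is a \emph{non-geodesic positive $S^{1}$-zonal flow on the compact manifold $M_{a}^{3}$}, and then invoke Theorem \ref{Theorem-Main} to produce $Y$. The only nontrivial item is the adjective \emph{positive}: everything else is either assumed ("non-geodesic $S^{1}$-zonal") or elementary.

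First I would fix the geometry. Rewriting the defining equation as $\frac{x^{2}+y^{2}}{a^{2}}+z^{2}+w^{2}=1$ exhibits $M_{a}^{3}$ as a smooth compact hypersurface of $\mathbb{R}^{4}$ (an ellipsoid with semi-axes $a,a,1,1$), which we endow with the induced metric $g$. The circle action rotating the $(x,y)$-plane is isometric, generated by the Killing field $X=x\partial_{y}-y\partial_{x}$, whose zero set on $M_{a}^{3}$ is exactly $S$ (and the analogous $(z,w)$-rotation vanishes along $N$). On $U:=M_{a}^{3}\setminus(N\cup S)$ this action is free, and $U$ is coordinatised by $(r,\phi,\psi)\in(0,1)\times S^{1}\times S^{1}$ through $x+iy=a\sqrt{1-r^{2}}\,e^{i\phi}$, $z+iw=r\,e^{i\psi}$, in which $X=\partial_{\phi}$ and
$$
g=\frac{1+(a^{2}-1)r^{2}}{1-r^{2}}\,dr^{2}+a^{2}(1-r^{2})\,d\phi^{2}+r^{2}\,d\psi^{2}.
$$
By Lemma \ref{Lemma-zonal-div=0} an $S^{1}$-zonal flow is $Z=fX$ with $X(f)=0$, so the hypothesis $\operatorname{supp}(Z)\subset U$ confines the data of $Z$ to the region $0<r<1$, away from where $\|X\|_{g}^{2}=a^{2}(1-r^{2})$ or $\operatorname{grad}\|X\|_{g}^{2}$ degenerates.

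Next I would verify positivity. The inequality in Definition \ref{Def-zonal-positive}, evaluated along the zonal geodesics of Definition \ref{Def-zonal-geodesic-arbitrary}, should reduce --- using the block-diagonal form of $g$ above --- to the definite sign of a scalar expression built from $\|X\|_{g}^{2}$, its first two $r$-derivatives, and the sectional curvatures of the $2$-planes containing $X$, all of which are explicit rational functions of $r$ and $a$ on $(0,1)$. I would compute this expression and check that its sign is the required one for every $r\in(0,1)$; the role of the support condition is precisely that it restricts attention to this open interval, excluding $r=0$ ($N$, where $\operatorname{grad}\|X\|_{g}^{2}=0$ and the zonal circles are geodesics of $M_{a}^{3}$) and $r=1$ ($S$, where $\|X\|_{g}^{2}=0$). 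Granting this, $Z$ is a non-geodesic positive $S^{1}$-zonal flow on the compact $3$-manifold $M_{a}^{3}$, and Theorem \ref{Theorem-Main} yields a divergence-free $Y$ on $M_{a}^{3}$ with ${\mathfrak m}{\mathfrak c}_{Z,Y}>0$.

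The hard part is this last computation: producing the Levi-Civita connection and curvature of $g$ in the $(r,\phi,\psi)$ chart and distilling from Definition \ref{Def-zonal-positive} a single inequality in $r$ and $a$ that visibly holds throughout $(0,1)$ --- so that, in contrast with the "almost every zonal flow" conclusion of \cite{TY1}, \emph{every} $S^{1}$-zonal flow supported away from $N\cup S$ qualifies. A minor additional task is reconciling the orientation and normalisation conventions of Definitions \ref{Def-zonal-flow-arbitrary}--\ref{Def-zonal-S1} with the explicit choice $X=\partial_{\phi}$, so that the hypothesis "non-geodesic" transfers verbatim to Theorem \ref{Theorem-Main}.
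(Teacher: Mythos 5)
Your overall strategy --- reduce to checking that $Z$ is a non-geodesic \emph{positive} $S^{1}$-zonal flow and then invoke Theorem \ref{Theorem-Main} --- is exactly the paper's, and your identification of the support condition as the source of positivity is on the right track. However, the way you propose to verify positivity rests on a misreading of Definition \ref{Def-zonal-positive}, and the computation you outline would not close the argument. ``Positive'' does not mean that some curvature-type scalar built from $\|X\|^{2}$ and the metric has a definite sign on all of $(0,1)$; it means that the function $F$ of Lemma \ref{Lemma-zonal-grad-collinear}, defined by $\operatorname{grad}(f^{2})=F\operatorname{grad}(\|X\|^{2})$, is positive \emph{somewhere}, i.e.\ $U^{+}=\{\operatorname{sgn}(Z)>0\}\neq\emptyset$. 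This $F$ depends on $f$, not only on the geometry, and for a compactly supported $f$ it cannot have a single sign on the whole interval. The paper's actual argument is: $\operatorname{supp}(Z)\subset M^{3}_{a}\backslash(N\cup S)$ forces $f^{2}$ to vanish near both ends of the $\chi$-interval and hence to attain an interior maximum, so $\partial_{\chi}(f^{2})$ takes both positive and negative values; meanwhile $\partial_{\chi}(\|X\|^{2})$ has a constant sign on $(0,\tfrac{\pi}{2})$ precisely because $a^{2}p^{2}\neq q^{2}$ (non-geodesicity); therefore $F$ takes both signs and in particular is positive on a nonempty open set. No sectional curvatures enter anywhere. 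Your plan to ``check that its sign is the required one for every $r\in(0,1)$'' would fail, since the sign of $F$ genuinely alternates; only its positivity on \emph{some} open set is needed or true.

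A second, smaller gap: you take $X=x\partial_{y}-y\partial_{x}$ only, but by the classification in Section \ref{Section-class-zonal-3D-ellip} the Killing field underlying an $S^{1}$-zonal flow on $M^{3}_{a}$ is a general combination $p\partial_{\xi}+q\partial_{\mu}$ of the two coordinate rotations with $q/p\in{\mathbb Q}\cup\{\infty\}$, and non-geodesicity is equivalent to $a^{2}p^{2}\neq q^{2}$. Moreover, it is Lemma \ref{Lemma-zonal-grad-collinear} (collinearity of $\operatorname{grad}(f^{2})$ with $\operatorname{grad}(\|X\|^{2})$) that forces $f$ to depend on $\chi$ alone --- a fact your coordinate reduction uses implicitly but does not justify, and which is needed to speak of an interior maximum of $f^{2}$ as a one-variable function.
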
 
\begin{Remark}
	Any zonal flow on the two-dimensional sphere or an ellipsoid 
	in the sense of \cite{TY1}
	is a non-geodesic  $S^{1}$-zonal flow in the sense of this article (see Corollary \ref{Cor-any-zonal-is-S1-geodesic-on-2D-ellipsoid} and Remark \ref{Remark-S2-zonal}).
	Thus, Theorem \ref{Theorem-Main-2}
	can be regard as a generalization of \cite[Thm.~1.2]{TY1}.
\end{Remark}

Theorem \ref{Theorem-Main-2} implies 
the existence of a conjugate point
on the volume-preserving diffeomorphism group
${\mathcal D}^{s}_{\mu}(M^{3}_{a})$
of $M^{3}_{a}$.
See Appendix \ref{Appendix-diffomorphism-and-MC}
for unexplained notations
and details.

\begin{Corollary}
\label{Cor-Main-3}
Let $s>\frac{7}{2}$,
$Z$ be a non-geodesic $S^{1}$-zonal flow on $M_{a}^{3}$
	with $\operatorname{supp}(Z)\subset M^{3}_{a}\backslash (N\cup S)$,
	and 
	$\eta(t)$ a geodesic on 
	${\mathcal D}^{s}_{\mu}(M^{3}_{a})$ corresponding $Z$.
	Then,
	there exists a point conjugate to the identity element 
	$e\in {\mathcal D}^{s}_{\mu}(M)$ along $\eta(t)$.
\end{Corollary}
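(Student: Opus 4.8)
The statement is obtained by combining Theorem \ref{Theorem-Main-2} with the M-criterion (Fact \ref{Fact-M-criterion}), so the plan is to connect these two facts and to keep careful track of the function-space regularity. First I would apply Theorem \ref{Theorem-Main-2} to the data of the corollary: since $Z$ is a non-geodesic $S^{1}$-zonal flow on $M_{a}^{3}$ with $\operatorname{supp}(Z)\subset M_{a}^{3}\backslash(N\cup S)$, that theorem furnishes a divergence-free vector field $Y$ on $M_{a}^{3}$ with ${\mathfrak m}{\mathfrak c}_{Z,Y}>0$. Two points should be recorded at this stage. First, an $S^{1}$-zonal flow is in particular a (smooth) stationary solution of \eqref{E-eq-intro} on the compact boundaryless $3$-manifold $M_{a}^{3}$; this is what makes the phrase ``the geodesic $\eta(t)$ on ${\mathcal D}^{s}_{\mu}(M_{a}^{3})$ corresponding to $Z$'' meaningful, $\eta(t)$ being the flow of the time-independent field $Z$, a smooth curve in ${\mathcal D}^{s}_{\mu}(M_{a}^{3})$ with $\eta(0)=e$ and $\dot\eta(t)\circ\eta(t)^{-1}=Z$ for all $t$ (completeness of the flow following from compactness of $M_{a}^{3}$). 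Second, the vector field $Y$ produced by the construction in the proof of Theorem \ref{Theorem-Main} is smooth, hence lies in $H^{s}(M_{a}^{3})$ for every $s$, so it is an admissible variation field in the Hilbert manifold ${\mathcal D}^{s}_{\mu}(M_{a}^{3})$.

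Next I would invoke the M-criterion in the form recalled in Appendix \ref{Appendix-diffomorphism-and-MC} (equivalently \cite[Fact.~1.1]{TY1}, going back to Misio{\l}ek \cite{MconjT2}). For $s>\tfrac{7}{2}=\tfrac{1}{2}\dim M_{a}^{3}+2$, the group ${\mathcal D}^{s}_{\mu}(M_{a}^{3})$ is a smooth Hilbert manifold with smooth geodesic spray for the right-invariant $L^{2}$ metric, and the Jacobi-field analysis underlying the criterion is available; the criterion then asserts precisely that ${\mathfrak m}{\mathfrak c}_{Z,Y}>0$ forces the index form of a suitable variation of $\eta$ built from $Y$ to become nonpositive on some interval $[0,T]$, whence, by the standard Morse-index argument, there is $t_{0}\in(0,T]$ and a nontrivial Jacobi field along $\eta$ vanishing at $0$ and at $t_{0}$, i.e.\ $\eta(t_{0})$ is conjugate to $e\in{\mathcal D}^{s}_{\mu}(M_{a}^{3})$ along $\eta$. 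Applying this with the $Y$ obtained above gives the desired conjugate point, which is the assertion of Corollary \ref{Cor-Main-3}.

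Since Theorem \ref{Theorem-Main-2} does the substantive work, the corollary is essentially a packaging statement, and the step I expect to require the most care is the regularity bookkeeping rather than any new analysis: (i) confirming that $s>7/2$ is exactly the range in which Fact \ref{Fact-M-criterion} / \cite[Fact.~1.1]{TY1} is valid for $\dim M=3$ — the one derivative beyond the threshold $s>\tfrac12\dim M+1$ that is already needed for ${\mathcal D}^{s}_{\mu}(M_{a}^{3})$ to be a smooth Hilbert manifold being consumed by the Jacobi-equation step inside the criterion; (ii) checking $Y\in H^{s}(M_{a}^{3})$, which is immediate from the smoothness of $Y$; and (iii) verifying that the abstract form of the criterion is stated for an arbitrary compact manifold without boundary, so that it applies verbatim to $M_{a}^{3}$ (which is diffeomorphic to $S^{3}$) and not merely to model cases. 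Once these are in place, the chain ``Theorem \ref{Theorem-Main-2} $\Rightarrow {\mathfrak m}{\mathfrak c}_{Z,Y}>0 \Rightarrow$ conjugate point along $\eta$'' closes the proof.
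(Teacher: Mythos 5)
Your proposal is correct and follows exactly the paper's own (one-line) argument: apply Theorem \ref{Theorem-Main-2} to obtain $Y$ with ${\mathfrak m}{\mathfrak c}_{Z,Y}>0$ and then invoke the M-criterion (Fact \ref{Fact-M-criterion}). The additional regularity bookkeeping you supply is consistent with the hypothesis $s>\tfrac{7}{2}=2+\tfrac{1}{2}\dim M_{a}^{3}$ required by that Fact, so nothing further is needed.
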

\begin{proof}
	This follows from Theorem \ref{Theorem-Main-2}
	by the M-criterion (Fact \ref{Fact-M-criterion}).
\end{proof}

\begin{Remark}
A different criterion for the existence of conjugate points was derived by S. Preston in \cite[Thm.~3.1 and (3.32)]{Preston-conj}. It differs from the $M$-criterion in that it is specific to the 3D case, and relies on finding nontrivial solutions to an ODE localized to some point $p \in M$. That criterion can also be used to prove the existence of the conjugate points provided here. Nevertheless, our examples had never been found before, to the best of our knowledge.
\end{Remark}

This paper is organized as follows.
In Section \ref{Section-generalization-zonal},
we propose a definition of a zonal flow on an
arbitrary Riemannian manifold 
and investigate its properties.
In Section \ref{Section-Misiolek-zonal},
we calculate the Misio{\l}ek curvature of 
a zonal flow
and 
prove a sufficient condition for the positivity
of the Misio{\l}ek curvature.
In Section \ref{Section-2D-ellipsoid},
we show that 
the definition of a zonal flow given in Section \ref{Section-generalization-zonal}
is equivalent to that of \cite{TY1}
in the two-dimensional ellipsoid case.
In Section \ref{Section-class-zonal-3D-ellip},
we classify zonal flows on 
three-dimensional ellipsoids.
In Section \ref{Section-prove-Theorem-Main-2},
we prove Theorem \ref{Theorem-Main-2}
as a corollary of Theorem \ref{Theorem-Main}.
In Appendixes \ref{Appendix-proof-Lemma-S2-Killing} and \ref{Appendix-proof-of-Lemma-Y-existence},
we prove some elementary lemmas used in this article.
In Appendix \ref{Appendix-diffomorphism-and-MC},
we summarize
the background materials
of the Misio{\l}ek curvature.

\section*{Acknowledgments}
The research of TT was partially supported by Grant-in-Aid for JSPS Fellows (20J00101), Japan Society for the Promotion of Science (JSPS). The research of TY was partially supported by Grant-in-Aid for Scientific Research B (17H02860, 18H01136, 18H01135 and 20H01819), Japan Society for the Promotion of Science (JSPS).

\section{A generalization of  zonal flow}
\label{Section-generalization-zonal}
In this section,
we propose a definition of a zonal flow
on an arbitrary Riemannian manifold
and 
investigate its properties.

\subsection{Killing vector fields}
In this section,
we recall the basic properties of the Killing vector fields.
Therefore,
all the materials in this section are well known.
However, we prove some results for convenience.
For example, see \cite[Sect.~3]{Riemannian} for more details.

Recall that 
a vector field $X$ on a Riemannian manifold $(M,g)$
is Killing
if and only if 
\begin{equation}
	g(\nabla_{V}X,W)
	=
	-
	g(\nabla_{W}X,V)
	\label{eq-Killing-Def-VW}
\end{equation}
for any vector fields $V,W$ on $M$,
where $\nabla$ is the Levi-Civita connection.
For the notational simplicity,
we set
\begin{equation}
	\|X\|^{2}:=g(X,X),
	\qquad
	\|X\|:=\sqrt{\|X\|^{2}}.
\end{equation}

\begin{Lemma}
\label{Lemma-Killing-nabla}
	Let $X$ be a Killing vector field on a Riemannian manifold $M$.
	Then,
	we have
	\begin{equation}
		2\nabla_{X}X=-\operatorname{grad}(\|X\|^{2}).
	\end{equation}
	In particular,
	$\nabla_{X}X=0$ if and only if
	$\|X\|$ is a constant function.
\end{Lemma}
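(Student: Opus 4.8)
The plan is to compute $2g(\nabla_X X, W)$ for an arbitrary vector field $W$ and identify the result with $-g(\operatorname{grad}(\|X\|^2), W)$. First I would expand the gradient: by definition of the gradient, $g(\operatorname{grad}(\|X\|^2), W) = W(\|X\|^2) = W(g(X,X))$. Applying metric compatibility of the Levi-Civita connection gives $W(g(X,X)) = 2g(\nabla_W X, X)$. So the claim reduces to showing $g(\nabla_X X, W) = -g(\nabla_W X, X)$ for all $W$, which is precisely the Killing condition \eqref{eq-Killing-Def-VW} with $V = X$. That settles the identity $2\nabla_X X = -\operatorname{grad}(\|X\|^2)$.

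For the "in particular" clause, I would argue as follows. If $\|X\|$ is constant, then $\|X\|^2$ is constant, so $\operatorname{grad}(\|X\|^2) = 0$, whence $\nabla_X X = 0$ by the formula just proved. Conversely, if $\nabla_X X = 0$, then $\operatorname{grad}(\|X\|^2) = 0$, so $\|X\|^2$ is locally constant; thus $\|X\|^2$ is constant on each connected component of $M$, hence $\|X\|$ is constant (on each component), which is the intended reading of the statement. Here I am implicitly using that a function with vanishing gradient is locally constant, a standard fact.

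There is no real obstacle here — the only subtlety worth flagging is the direction of the equivalence and the need to be a little careful about connectedness in the converse (a function with zero gradient is constant only on connected components), but since the paper works with a fixed manifold $M$ this is a harmless point. The computation is a two-line application of metric compatibility together with the defining antisymmetry \eqref{eq-Killing-Def-VW} of a Killing field, specialized by setting one of the two arguments equal to $X$ itself.
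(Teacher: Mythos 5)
Your proof is correct and follows the same route as the paper: reduce the identity to $2g(\nabla_X X,W)=-g(\operatorname{grad}(\|X\|^{2}),W)$, expand the right-hand side via metric compatibility to $2g(\nabla_W X,X)$, and invoke the Killing condition \eqref{eq-Killing-Def-VW} with $V=X$. Your extra care with the ``in particular'' clause (connected components) is a harmless refinement of what the paper leaves implicit.
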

\begin{proof}
	It is sufficient to prove 
\begin{equation}
2g(\nabla_{X}X,W)=-g(\operatorname{grad}(\|X\|^{2}),W)
\label{eq-Lemma-2nablaXX=gradX}
\end{equation}
for any vector field $W$ on $M$
by the nondegeneracy of the Riemannian metric $g$.
However,
	we have
	\begin{eqnarray}
		g(\operatorname{grad}(\|X\|^{2}),W)
		&=&
		W(\|X\|^{2})
	\nonumber\\
	&=&
	2g(\nabla_{W}X,X).
	\nonumber
	\end{eqnarray}
	Therefore,
	\eqref{eq-Lemma-2nablaXX=gradX} follows from
	\eqref{eq-Killing-Def-VW}.
\end{proof}

\begin{Lemma}
	\label{Lemma-Killing-XXX=0}
	Let $X$ be a Killing vector field on a Riemannian manifold $M$.
	Then,
	we have
	\begin{eqnarray}
		X(\|X\|^{2})=0.
		\label{eq-Lemma-XXX=0}
	\end{eqnarray}
\end{Lemma}
\begin{proof}
	By Lemma \ref{Lemma-Killing-nabla},
	we have
	\begin{eqnarray*}
		X(\|X\|^{2})&=&
		g(\operatorname{grad}(\|X\|^{2}),X)
		\\
		&=&
		-2
		g(\nabla_{X}X,X).
	\end{eqnarray*}
Thus,
\eqref{eq-Lemma-XXX=0} follows from
	\eqref{eq-Killing-Def-VW}.
\end{proof}

\begin{Lemma}
\label{Lemma-unique-Kiiling}
	Let $M$ be a connected Riemannian manifold
	and
	$X_{1}$, $X_{2}$ 
	Killing vector fields on $M$.
	Then,
	if there exists an open subset $U\subset M$
	on which $X_{1}=X_{2}$,
	then we have $X_{1}=X_{2}$ on $M$.
	In particular, the zero set 
	$\{x\in M\mid X=0\}$
	of any nonzero Killing vector field $X$ on $M$
	dose not have an interior point.
\end{Lemma}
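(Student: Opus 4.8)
The plan is to reduce the statement to a unique-continuation property of a single Killing field and then exploit the classical fact that a Killing field obeys a linear second-order ODE along geodesics.

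First I would observe that $X:=X_{1}-X_{2}$ is again Killing, since the defining condition \eqref{eq-Killing-Def-VW} is linear in the vector field, and that $X\equiv 0$ on $U$. It therefore suffices to prove the following: if a Killing field $X$ on the connected manifold $M$ vanishes on a nonempty open set $U$, then $X\equiv 0$ on $M$. Next I would recall the well-known identity expressing the second covariant derivative of a Killing field algebraically in terms of $X$ and the Riemann curvature tensor, namely $\nabla^{2}_{V,W}X = R(X,V)W$ (up to the sign convention for $R$); see \cite[Sect.~3]{Riemannian}. Consequently, along any geodesic $\gamma$ in $M$ the pair $(Y,Z):=\bigl(X\circ\gamma,\ (\nabla_{\dot\gamma}X)\circ\gamma\bigr)$ of vector fields along $\gamma$ satisfies the linear first-order system $\tfrac{D}{dt}Y=Z$, $\tfrac{D}{dt}Z=R(Y,\dot\gamma)\dot\gamma$, so by uniqueness for linear ODEs it is determined by its value at a single parameter value.

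Then I would run a connectedness argument. Set $A:=\{p\in M\mid X(p)=0\text{ and }(\nabla X)_{p}=0\}$. This set is closed by continuity and nonempty, since on $U$ the field $X$ vanishes identically and hence so does $\nabla X$, giving $U\subseteq A$. To see that $A$ is open, fix $p\in A$ and a normal neighborhood $V$ of $p$; for each $q\in V$ the radial geodesic $\gamma$ from $p$ to $q$ has $(Y,Z)=(0,0)$ at $t=0$ because $X(p)=0$ and $(\nabla X)_{p}=0$, hence $Y\equiv 0$ along $\gamma$ and in particular $X(q)=0$. Thus $X$ vanishes on all of $V$, so $\nabla X$ does too, and $V\subseteq A$. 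Since $M$ is connected, $A=M$, so $X\equiv 0$, i.e.\ $X_{1}=X_{2}$ on $M$. Finally, the "in particular" clause is immediate: if the zero set of a Killing field $X$ had an interior point, then $X$ would vanish on a nonempty open set and hence on all of $M$ by what was just proved; contrapositively, the zero set of a nonzero Killing field has no interior point.

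I expect the only non-routine ingredient to be the Bochner-type identity $\nabla^{2}X=R(X,\cdot)\cdot$ that converts the problem into a linear ODE; once it is in hand, the remainder is the textbook open–closed connectedness scheme. Note that completeness of $M$ is not required, since the openness of $A$ is established locally in a normal neighborhood of each of its points.
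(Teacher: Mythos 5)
Your proof is correct, but it takes a different route from the paper's: where the paper simply observes that $\nabla X_{1}=\nabla X_{2}$ on $U$ and then invokes Fact \ref{Fact-unique-Kiiling} (Nomizu's lemma, \cite[Lem.~3]{Nomizu60}) as a black box, you pass to the difference $X:=X_{1}-X_{2}$ (legitimate, since \eqref{eq-Killing-Def-VW} is linear) and then essentially \emph{reprove} that cited fact: the identity $\nabla^{2}_{V,W}X=R(X,V)W$ turns a Killing field restricted to a geodesic into a solution of a linear second-order ODE (a Jacobi field), so its $1$-jet at one point controls it along the geodesic, and your open--closed argument on $A=\{p\mid X(p)=0,\ (\nabla X)_{p}=0\}$ propagates the vanishing over the connected manifold. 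Both arguments rest on the same underlying principle --- a Killing field is determined by its $1$-jet at a single point --- so the mathematics is the same at bottom; what your version buys is self-containedness (no appeal to \cite{Nomizu60}) and an explicit confirmation that completeness of $M$ is not needed, at the cost of length. The handling of the ``in particular'' clause is the same in both proofs. One cosmetic point: since you prove the full unique-continuation statement yourself, you could state it as a standalone lemma replacing Fact \ref{Fact-unique-Kiiling} in the restricted form you actually need (vanishing $1$-jet at a point implies vanishing), which is slightly weaker than Nomizu's statement but suffices here.
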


Before the proof,
recall the following fact.
Let ${\mathfrak X}(M)$
be the space of vector fields on $M$.
For $V\in{\mathfrak X}(M)$,
we write $\nabla V$ 
for an operator 
\begin{eqnarray}
	\nabla V:{\mathfrak X}(M)&\to & {\mathfrak X}(M)
	\label{eq-Def-nablaX-ope}\\
W&\mapsto& \nabla V(W):=\nabla_{W}V.
\nonumber
\end{eqnarray}

\begin{Fact}[{\cite[Lem.~3]{Nomizu60}}]
	\label{Fact-unique-Kiiling}
	Let $M$ be a Riemannian manifold.
	Then,
	a Killing vector field
	defined on a connected open subset $U\subset M$
	is uniquely determined by the values of $X$
	and $\nabla X$
	at any single point of $U$.
\end{Fact}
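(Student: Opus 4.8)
The plan is to reduce the statement to a uniqueness theorem for a linear first-order ODE. First, by linearity of the Killing equation \eqref{eq-Killing-Def-VW}, the difference $X:=X_{1}-X_{2}$ of two Killing fields on $U$ that agree, together with their covariant derivatives, at a point $p\in U$ is again a Killing field with $X(p)=0$ and $\nabla X|_{p}=0$. Hence it suffices to prove: if $X$ is Killing on a connected open set $U$ and $X(p)=0$, $\nabla X|_{p}=0$ for some $p\in U$, then $X\equiv 0$ on $U$.

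The key ingredient is the classical second-order identity satisfied by a Killing field, namely that the covariant derivative of the endomorphism $\nabla X$ (in the notation \eqref{eq-Def-nablaX-ope}) is expressed linearly in terms of $X$ and the curvature tensor $R$: one has $\nabla_{V}(\nabla X)=-R(X,V)$ as an endomorphism-valued equation, obtained by differentiating \eqref{eq-Killing-Def-VW} and combining it with the Ricci identity and the first Bianchi identity. Note that by \eqref{eq-Killing-Def-VW} the endomorphism $\nabla X$ is $g$-skew-symmetric, so the pair $(X,\nabla X)$ is a section of the vector bundle $E:=TM\oplus\mathfrak{so}(TM,g)$, where $\mathfrak{so}(TM,g)$ denotes the bundle of $g$-skew endomorphisms of $TM$. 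Define a connection $\mathcal D$ on $E$ by $\mathcal D_{V}(S,A):=\bigl(\nabla_{V}S-A(V),\ \nabla_{V}A+R(S,V)\bigr)$. Then a section $(S,A)$ is $\mathcal D$-parallel precisely when $A=\nabla S$ (so in particular $S$ is Killing) and $\nabla_{V}(\nabla S)=-R(S,V)$; thus Killing fields on $U$ correspond exactly to $\mathcal D$-parallel sections of $E$ over $U$, and in particular $(X,\nabla X)$ is $\mathcal D$-parallel.

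To conclude, set $A:=\{x\in U\mid X(x)=0\text{ and }\nabla X|_{x}=0\}$. This set is closed in $U$ by continuity and nonempty since $p\in A$. It is also open: given $x\in A$, choose a normal ball $B\subset U$ centered at $x$; for any $y\in B$ the radial geodesic $\gamma$ from $x$ to $y$ stays in $B$, and $(X,\nabla X)$ restricted to $\gamma$ solves the linear ODE $\mathcal D_{\dot\gamma}(\cdot)=0$ along $\gamma$ with vanishing initial data at $x$, hence vanishes identically along $\gamma$ by uniqueness for linear ODEs; in particular $y\in A$. Since $U$ is connected (a connected open subset of a manifold is path-connected), $A=U$, so $X\equiv 0$ on $U$. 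The main obstacle is the derivation of the Killing identity $\nabla_{V}(\nabla X)=-R(X,V)$, which is a standard but somewhat delicate tensorial computation; once it is in hand, the rest is the routine clopen/ODE-uniqueness argument above. (This is exactly the content of \cite[Lem.~3]{Nomizu60}.)
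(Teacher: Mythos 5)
Your argument is correct, but note that the paper itself gives no proof of this statement: it is imported verbatim as a \emph{Fact} with a citation to Nomizu's Lemma~3, and what you have written is essentially the classical proof from that reference (reduce to a Killing field vanishing to first order at a point, encode $(X,\nabla X)$ as a parallel section of $TM\oplus\mathfrak{so}(TM,g)$ via the second-order identity $\nabla_{V}(\nabla X)=-R(X,V)$, and conclude by ODE uniqueness plus a clopen argument on the connected set $U$). So there is nothing to compare against within the paper; your reconstruction is sound, the only point requiring care being the sign conventions in the curvature identity, which in any case do not affect the linearity of the resulting ODE and hence not the uniqueness conclusion.
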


\begin{proof}[Proof of Lemma \ref{Lemma-unique-Kiiling}]
	Note that if $X_{1}=X_{2}$ on $U$,
	we have $\nabla X_{1}=\nabla X_{2}$ on $U$
	which implies
	$X_{1}=X_{2}$ on $M$
	by
	Fact \ref{Fact-unique-Kiiling}.
	Moreover,
	if 
	$\{x\in M\mid X=0\}$ 
	has an interior point,
	there exists an open subset $U\subset M$
	on which we have $X=0$.
	This completes the proof
	because the zero vector fields is Killing.
\end{proof}

\subsection{A definition of zonal flow}
In this section,
we propose a definition of a zonal flow 
on an arbitrary Riemannian manifold $M$.
Let $\mu$ be the volume form on $M$
and
\begin{eqnarray*}
P:{\mathfrak X}(M)&\to &{\mathfrak X}_{\mu}(M),
\\
V&\mapsto & P(V):=V-\operatorname{grad}f
\end{eqnarray*}
be the $L^{2}$ orthogonal projection,
where ${\mathfrak X}_{\mu}(M)$
is the space of divergence-free vector fields on $M$,
$f:=\Delta^{-1}\operatorname{div}(V)$.
Thus,
for $V\in{\mathfrak X}(M)$,
$P(V)=0$
if and only if
we have
\begin{equation}
	V=-\operatorname{grad} p
\end{equation}
for some function $p$ on $M$.
\begin{Definition}
\label{Def-zonal-flow-arbitrary}
Let $M$ be an arbitrary Riemannian manifold $M$
and
$Z$ a vector field on $M$.
Then we call $Z$ a \textit{zonal flow}
if it satisfies the following three conditions.
\begin{enumerate}[(1)]
	\item There exist a function $f$ and Killing vector field $X$ on $M$ such that $Z=fX$,
	\item $\operatorname{div}(Z)=0$,
	\item $P(\nabla_{Z}Z)=0$.
\end{enumerate}
\end{Definition}

Then,
the following is obvious.
\begin{Proposition}
\label{Prop-zonal-stationary}
	Let
	$M$ be a compact Riemannian manifold 
	and
	$Z$ be zonal flow on $M$.
	Then,
	$Z$ is a stationary solution of the incompressible 
	Euler equation \eqref{E-eq-intro} on $M$.
\end{Proposition}
\begin{proof}
	This is a consequence of the definition.
\end{proof}

We list up some elementary properties of zonal flows.

\begin{Lemma}
\label{Lemma-zonal-div=0}
	Let $Z$ be a vector field on a Riemannian manifold $M$.
	Suppose $Z=fX$ with a function $f$ and a Killing vector field $X$ on $M$.
	Then $\operatorname{div}(Z)=0$ if and only if $X(f)=0$.
\end{Lemma}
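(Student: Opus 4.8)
The plan is to compute $\operatorname{div}(fX)$ directly using the product rule for the divergence and then use that $X$ is Killing. First I would recall the general identity
\[
\operatorname{div}(fX) = f\operatorname{div}(X) + X(f),
\]
valid for any smooth function $f$ and vector field $X$ on a Riemannian manifold (this is just the Leibniz rule for the divergence, or equivalently for the Lie derivative of the volume form: $\mathcal{L}_{fX}\mu = f\,\mathcal{L}_X\mu + df\wedge \iota_X\mu$, whose contraction gives the stated formula). Then I would invoke the fact that a Killing vector field is divergence-free: since the flow of $X$ consists of isometries, it preserves the volume form, so $\operatorname{div}(X)=0$. Concretely this also follows by tracing \eqref{eq-Killing-Def-VW}: choosing a local orthonormal frame $\{E_i\}$, $\operatorname{div}(X)=\sum_i g(\nabla_{E_i}X,E_i) = -\sum_i g(\nabla_{E_i}X,E_i)$ by skew-symmetry, hence $\operatorname{div}(X)=0$.

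Combining these two facts gives $\operatorname{div}(Z) = \operatorname{div}(fX) = X(f)$, from which the equivalence $\operatorname{div}(Z)=0 \iff X(f)=0$ is immediate. So the proof is essentially a one-line computation once the two standard ingredients are in place.

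There is no real obstacle here; the only thing to be mildly careful about is justifying $\operatorname{div}(X)=0$ for a Killing field, but this is entirely standard and was already alluded to in the "Killing vector fields" subsection, so I would either cite it or give the two-line orthonormal-frame trace argument above. I would write the proof as: state the Leibniz identity, note $\operatorname{div}(X)=0$ since $X$ is Killing, conclude $\operatorname{div}(Z)=X(f)$, and read off the equivalence.
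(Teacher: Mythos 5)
Your proposal is correct and follows essentially the same route as the paper: apply the Leibniz rule $\operatorname{div}(fX)=X(f)+f\operatorname{div}(X)$ and use that any Killing vector field is divergence-free. The only difference is that you additionally spell out the standard orthonormal-frame trace argument for $\operatorname{div}(X)=0$, which the paper simply cites as a known fact.
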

\begin{proof}
	We have
	\begin{eqnarray}
		\operatorname{div}(fX)=
		X(f)+ f\operatorname{div}(X).
	\end{eqnarray}
	Because any Killing vector field is divergence-free,
	 we have the lemma.
\end{proof}

\begin{Lemma}
\label{Lemma-zonal-nabla-grad}
	Let $Z=fX$ with a nonzero function $f$ and a Killing vector field $X$ on a Riemannian manifold $M$.
	Suppose $\operatorname{div}(Z)=0$.
	Then, we have
	\begin{eqnarray}
		\nabla_{Z}Z
		=
		f^{2}\nabla_{X}X
		=
		-\frac{f^{2}}{2}\operatorname{grad}(\|X\|^{2}).
		\label{eq-Lemma-nablaZZ}
	\end{eqnarray}
\end{Lemma}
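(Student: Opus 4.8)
The plan is to expand $\nabla_Z Z$ using the tensorial/derivation properties of the Levi-Civita connection and then invoke the two previously established lemmas. First I would write $Z = fX$ and use $\mathbb{R}$-bilinearity together with the Leibniz rule in the first slot (function linearity) and the derivation property in the second slot:
\begin{eqnarray*}
\nabla_Z Z = \nabla_{fX}(fX) = f\,\nabla_X(fX) = f\big(X(f)\,X + f\,\nabla_X X\big) = f\,X(f)\,X + f^2\,\nabla_X X.
\end{eqnarray*}

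Next I would kill the first term: since $\operatorname{div}(Z)=0$ and $Z=fX$ with $X$ Killing, Lemma \ref{Lemma-zonal-div=0} gives $X(f)=0$, so $f\,X(f)\,X = 0$ and therefore $\nabla_Z Z = f^2\,\nabla_X X$. This establishes the first equality in \eqref{eq-Lemma-nablaZZ}.

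Finally, to get the second equality I would apply Lemma \ref{Lemma-Killing-nabla} to the Killing field $X$, which yields $2\nabla_X X = -\operatorname{grad}(\|X\|^2)$, hence $\nabla_X X = -\tfrac12\operatorname{grad}(\|X\|^2)$; multiplying by $f^2$ gives $\nabla_Z Z = -\tfrac{f^2}{2}\operatorname{grad}(\|X\|^2)$, as claimed. There is no real obstacle here — the only point requiring a moment's care is the bookkeeping of where the function $f$ may be pulled out of $\nabla$ (it is $C^\infty(M)$-linear in the subscript slot but only a derivation in the argument slot), and the use of the hypothesis $\operatorname{div}(Z)=0$ precisely to discard the $X(f)X$ term. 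The hypothesis that $f$ is nonzero is not actually needed for the identity itself, but is harmless to keep for consistency with the surrounding statements.
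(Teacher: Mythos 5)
Your proposal is correct and follows essentially the same route as the paper: expand $\nabla_{fX}(fX)$ via the Leibniz rule, discard the $fX(f)X$ term using Lemma \ref{Lemma-zonal-div=0} (which gives $X(f)=0$ from $\operatorname{div}(Z)=0$), and conclude with Lemma \ref{Lemma-Killing-nabla}. Your remark that the nonvanishing of $f$ is not actually used is also accurate.
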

\begin{proof}
The assumption $\operatorname{div}(Z)=0$
and Lemma \ref{Lemma-zonal-div=0}
imply
\begin{eqnarray*}
	\nabla_{Z}Z&=&f\nabla_{X}(fX)
	\\
	&=&
	f^{2}\nabla_{X}X.
\end{eqnarray*}
This implies \eqref{eq-Lemma-nablaZZ}
by Lemma \ref{Lemma-Killing-nabla}.
\end{proof}

\begin{Corollary}
\label{Cor-geodesic-equiv}
	Let $Z=fX$ with a nonzero function $f$ and a Killing vector field $X$ on a Riemannian manifold $M$.
	Suppose $\operatorname{div}(Z)=0$.
	Then,
	the following are equivalent:
	\begin{enumerate}[(i)]
		\item $\nabla_{Z}Z=0$,
		\item $\nabla_{X}X=0$,
		\item $\|X\|$ is a constant function.
	\end{enumerate}
\end{Corollary}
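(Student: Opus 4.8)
The plan is to deduce the three equivalences directly from Lemmas \ref{Lemma-Killing-nabla} and \ref{Lemma-zonal-nabla-grad}, with essentially no new computation. The equivalence of (ii) and (iii) is precisely the second assertion of Lemma \ref{Lemma-Killing-nabla}: since $2\nabla_{X}X=-\operatorname{grad}(\|X\|^{2})$ and $g$ is nondegenerate, $\nabla_{X}X$ vanishes identically if and only if $\operatorname{grad}(\|X\|^{2})$ does, i.e. if and only if $\|X\|$ is constant (on each connected component). So the only real content is the link between (i) and (ii).

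Here Lemma \ref{Lemma-zonal-nabla-grad} does the work: under the standing hypothesis $\operatorname{div}(Z)=0$ it gives $\nabla_{Z}Z=f^{2}\nabla_{X}X$. Thus (ii) $\Rightarrow$ (i) is immediate, and conversely $\nabla_{Z}Z=0$ forces $f^{2}\nabla_{X}X=0$, hence $\nabla_{X}X=0$ at every point where $f\neq 0$. Since $f$ is nonzero, this already yields $\nabla_{X}X=0$ on all of $M$ when $f$ is nowhere vanishing; more generally one only knows it on the nonempty open set $\{f\neq 0\}$ and must then extend the conclusion to $M$.

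I expect the one genuinely delicate step to be exactly this extension: a priori $\nabla_{X}X$, equivalently $\operatorname{grad}(\|X\|^{2})$, could be nonzero on the interior of $\{f=0\}$. Ruling this out amounts to a unique-continuation property of Killing fields --- that a Killing vector field with $\|X\|$ locally constant on a nonempty open subset of a connected manifold has $\|X\|$ globally constant --- which is in the spirit of the rigidity recorded in Fact \ref{Fact-unique-Kiiling} and Lemma \ref{Lemma-unique-Kiiling}. (In the application to $M^{3}_{a}$ the point is moot: there $\operatorname{grad}(\|X\|^{2})$ vanishes only on $N\cup S$, away from $\operatorname{supp}(Z)$, so $f^{2}\nabla_{X}X=0$ already forces $f\equiv 0$ unless $\|X\|$ is constant.) Granting this, (i), (ii) and (iii) are equivalent, which completes the proof.
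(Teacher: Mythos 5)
Your reduction is exactly the paper's: the published proof is one line, observing that $\operatorname{grad}(\|X\|^{2})=0$ iff $\|X\|$ is constant and then citing Lemma \ref{Lemma-zonal-nabla-grad}, which is precisely your route via $\nabla_{Z}Z=f^{2}\nabla_{X}X=-\tfrac{f^{2}}{2}\operatorname{grad}(\|X\|^{2})$ together with Lemma \ref{Lemma-Killing-nabla}. For (ii)$\Leftrightarrow$(iii) and (ii)$\Rightarrow$(i) you and the paper agree and there is nothing to add.

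The step you single out as delicate --- passing from $\nabla_{X}X=0$ on the open set $\{f\neq 0\}$ to $\nabla_{X}X=0$ on all of $M$ --- is a genuine issue, and the paper's one-line proof silently skips it; you deserve credit for locating it. However, the unique-continuation property you propose to ``grant'' does not follow from Fact \ref{Fact-unique-Kiiling} or Lemma \ref{Lemma-unique-Kiiling}: those say a Killing field is determined by its $1$-jet at a point, not that local constancy of the derived function $\|X\|^{2}$ propagates. In fact the property is false for general smooth (non-analytic) metrics: take a compact surface of revolution whose profile radius $c_{1}(r)$ is constant on a band and non-constant elsewhere, let $X=\partial_{\theta}$ (Killing, with $\|X\|^{2}=c_{1}^{2}$ constant exactly on the band), and let $f=f(r)$ be supported in the band; then $Z=fX$ is a nonzero field with $\operatorname{div}Z=0$ and $\nabla_{Z}Z=0$, yet (ii) and (iii) fail. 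So the implication (i)$\Rightarrow$(ii) genuinely needs an extra hypothesis ($f$ nowhere vanishing, or $\{\operatorname{grad}(\|X\|^{2})=0\}$ having empty interior, or real-analyticity), and your sketched patch does not close the gap in the stated generality --- though, as your parenthetical correctly notes, in every application in the paper (the ellipsoids) $\operatorname{grad}(\|X\|^{2})$ vanishes only on a nowhere-dense set, so the implication that is actually used does hold there.
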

\begin{proof}
Note that
$\operatorname{grad}(\|X\|^{2})=0$
if and only if $\|X\|$ is a constant function.
Thus,
this corollary follows from 
Lemma \ref{Lemma-zonal-nabla-grad}.
\end{proof}

According to this corollary,
we take the following definition.
\begin{Definition}
\label{Def-zonal-geodesic-arbitrary}
Let $M$ be a Riemannian manifold and
	$Z$ a zonal flow
	on $M$.
	Then,
	we call $Z$ a \textit{geodesic zonal flow}
	if $\nabla_{Z}Z=0$.
	We also say that 
	a zonal flow $Z$ is non-geodesic
	if $Z$ is not a geodesic zonal flow.
\end{Definition}

\begin{Lemma}
\label{Lemma-geodesic-zonal-fZ}
	Let $X$ be a Killing vector field on $M$
	such that
	$\nabla_{X}X=0$.
	Then,
	$Z=fX$ is a geodesic zonal flow on $M$
	for any function $f$ satisfying $X(f)=0$.
\end{Lemma}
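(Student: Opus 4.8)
The plan is to verify directly that $Z=fX$ fulfills the three requirements of Definition \ref{Def-zonal-flow-arbitrary} together with the geodesic condition of Definition \ref{Def-zonal-geodesic-arbitrary}. Condition (1) of Definition \ref{Def-zonal-flow-arbitrary} holds by construction, since $f$ is a function and $X$ is Killing. Condition (2), namely $\operatorname{div}(Z)=0$, follows immediately from Lemma \ref{Lemma-zonal-div=0} and the hypothesis $X(f)=0$.

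The main step is to compute $\nabla_{Z}Z$. Expanding by the Leibniz rule for the Levi-Civita connection,
$$\nabla_{Z}Z=\nabla_{fX}(fX)=f\,X(f)\,X+f^{2}\nabla_{X}X.$$
Both terms vanish: the first because $X(f)=0$, the second because $\nabla_{X}X=0$ by hypothesis. (Alternatively, when $f\not\equiv 0$ one may invoke Lemma \ref{Lemma-zonal-nabla-grad} to get $\nabla_{Z}Z=f^{2}\nabla_{X}X=0$, the case $f\equiv 0$ being trivial since then $Z=0$; I would prefer the one-line direct computation, which handles all $f$ uniformly.) Hence $\nabla_{Z}Z=0$. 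This gives at once condition (3) of Definition \ref{Def-zonal-flow-arbitrary}, since $P(0)=0$, so $Z$ is a zonal flow; and it is precisely the defining property of a geodesic zonal flow in Definition \ref{Def-zonal-geodesic-arbitrary}. Assembling these facts, $Z$ is a geodesic zonal flow on $M$.

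I do not anticipate any genuine obstacle here: the statement is essentially a bookkeeping consequence of the definitions and the Killing-field lemmas proved above. The only point meriting a word of care is the choice between citing Lemma \ref{Lemma-zonal-nabla-grad} (which is stated only for nonzero $f$) and carrying out the elementary direct expansion of $\nabla_{fX}(fX)$; the direct route is cleaner and avoids treating the degenerate case $f\equiv 0$ separately.
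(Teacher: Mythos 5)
Your proof is correct and follows essentially the same route as the paper: divergence-freeness via Lemma \ref{Lemma-zonal-div=0} and then $\nabla_{Z}Z=0$ from the hypotheses, which settles both the pressure condition and the geodesic property. Your direct expansion of $\nabla_{fX}(fX)$ in place of citing Lemma \ref{Lemma-zonal-nabla-grad} is a minor cosmetic refinement (it sidesteps the nonzero-$f$ hypothesis of that lemma), not a different argument.
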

\begin{proof}
	The assumption $X(f)=0$ and Lemma \ref{Lemma-zonal-div=0}
	imply
	$\operatorname{div}(Z)=0$.
	The assumption of $\nabla_{X}X=0$ implies
	$\nabla_{Z}Z=0$
	by Lemma \ref{Lemma-zonal-nabla-grad}.	
	This completes the proof.
\end{proof}

\subsection{Further properties of zonal flows}
In this section,
we state further properties of zonal flows.
The first is the uniqueness of 
a representation of $Z=fX$
with a function $f$ and a Killing vector field $X$.

\begin{Lemma}
\label{Lemma-zonal-rep-unique}
	Let $Z$ be a nonzero zonal flow on a connected Riemannian manifold $M$.
	Represent $Z=fX$ with a function $f$ and Killing vector field $X$ on $M$.
	Then, $f$ and $X$ are unique up to constant multiple.
	In other words,
	if we have $Z=f_{1}X_{1}=f_{2}X_{2}$
	with functions $f_{1},f_{2}$ and Killing vector fields $X_{1},X_{2}$ on $M$,
	then there exists a constant $C\in{\mathbb R}$
	such that 
	$f_{1}=Cf_{2}$
	and
	$CX_{1}=X_{2}$.
\end{Lemma}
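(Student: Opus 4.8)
The plan is to exploit the pointwise identity $f_1 X_1 = f_2 X_2$ together with the rigidity of Killing vector fields (Lemma \ref{Lemma-unique-Kiiling} / Fact \ref{Fact-unique-Kiiling}). The key observation is that on the open set where $Z \neq 0$, both $f_1, f_2$ are nonvanishing, so on that set $X_1 = (f_2/f_1) X_2$; I want to show the ratio $\varphi := f_2/f_1$ is locally constant, which forces $X_1$ and $X_2$ to be proportional Killing fields, and then the uniqueness part of Fact \ref{Fact-unique-Kiiling} (knowing the value and covariant derivative at one point) upgrades local proportionality to global proportionality.

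\medskip

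First I would fix a point $p$ where $Z(p) \neq 0$ and work on a connected neighborhood $U$ on which $f_1, f_2$ are nowhere zero. Write $\varphi = f_2/f_1$, a smooth function on $U$, so that $X_1 = \varphi X_2$ on $U$. Differentiating, for any vector field $W$ I get $\nabla_W X_1 = W(\varphi) X_2 + \varphi \nabla_W X_2$. Now I apply the Killing condition \eqref{eq-Killing-Def-VW} to both $X_1$ and $X_2$ and compare. Taking $g(\nabla_W X_1, V) + g(\nabla_V X_1, W) = 0$ and substituting the expression above, the $\varphi \nabla_\bullet X_2$ terms cancel by the Killing identity for $X_2$, leaving $W(\varphi)\, g(X_2, V) + V(\varphi)\, g(X_2, W) = 0$ for all $V, W$. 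Setting $V = W$ gives $W(\varphi)\, g(X_2, W) = 0$ for all $W$; polarizing (or testing with $W$ ranging over a basis) shows $W(\varphi) = 0$ whenever $g(X_2, W)$ can be made nonzero, and by choosing $V$ independent of the direction of $X_2$ one concludes $W(\varphi) = 0$ for all $W$ at each point of $U$ — hence $\varphi$ is constant on $U$, say $\varphi \equiv C \neq 0$.

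\medskip

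With $X_1 = C X_2$ on the open set $U$, Lemma \ref{Lemma-unique-Kiiling} (applied to the Killing fields $X_1$ and $C X_2$) gives $X_1 = C X_2$ on all of $M$. Then $f_1 X_1 = f_2 X_2 = f_2 (X_1/C)$, so $(f_1 - f_2/C) X_1 = 0$; since $X_1$ is a nonzero Killing field, its zero set has empty interior by Lemma \ref{Lemma-unique-Kiiling}, so $f_1 = f_2/C$ on a dense open set and hence everywhere by continuity. Thus $f_1 = C^{-1} f_2$ and $C X_1 \cdot C^{-1}\cdots$ — more cleanly, writing the constant as $C$ in the form of the statement we get $f_1 = C f_2$ and $C X_1 = X_2$ after renaming $C \mapsto C^{-1}$.

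\medskip

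\textbf{The main obstacle} I anticipate is the middle step: extracting $W(\varphi) = 0$ for \emph{all} $W$ from the relation $W(\varphi)\, g(X_2, W) = 0$. At a point where $X_2 \neq 0$ this needs a small argument — given any tangent vector $W$, decompose it relative to $X_2^\flat$, or use the bilinear relation $W(\varphi) g(X_2,V) + V(\varphi) g(X_2,W) = 0$ directly: pick $V$ with $g(X_2, V) \neq 0$ and $W$ arbitrary, then also swap roles, to solve a $2\times 2$ system forcing $W(\varphi) = 0$. Care is also needed to confirm $X_2 \neq 0$ on a dense set (again Lemma \ref{Lemma-unique-Kiiling}, since $Z \neq 0$ forces $X_2 \neq 0$ wherever $Z \neq 0$), which is exactly where connectedness of $M$ enters. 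Everything else is routine bookkeeping with the Leibniz rule and the identity theorem for Killing fields.
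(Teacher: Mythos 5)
Your proposal is correct and follows essentially the same route as the paper: derive the symmetric relation $W(\varphi)\,g(X_{2},V)+V(\varphi)\,g(X_{2},W)=0$ from the Killing equations applied to $X_{1}=\varphi X_{2}$, conclude $d\varphi=0$ on the dense set where $X_{2}\neq 0$ and hence everywhere, and globalize by the rigidity of Killing fields (Lemma \ref{Lemma-unique-Kiiling}). The only cosmetic difference is in how the cross term is eliminated: the paper first gets $X_{2}(\varphi)=0$ from $\operatorname{div}(X_{1})=0$ and then sets $W=X_{2}$, whereas you use the diagonal substitution $V=W$ together with a short polarization argument; both are valid.
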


\begin{proof}
By Lemma \ref{Lemma-unique-Kiiling},
it is sufficient to prove that
there exists a nonempty open subset $U\subset M$
on which we have $X_{1}=CX_{2}$.
By the nonzero assumption of $Z$,
$$
U:=\{x\in M\mid f_{1}(x)\neq 0\}=\{x\in M\mid f_{2}(x)\neq 0\}
$$
is nonempty.
Setting $h:=\frac{f_{1}}{f_{2}}$,
we have
\begin{equation}
X_{1}=hX_{2}
\label{eq-X1=hX2}
\end{equation}
on $U$.
We will show $h$ is constant on $U$. 
Because $X_{1}$ is Killing,
we have
\begin{equation}
	0=\operatorname{div}(X_{1})=\operatorname{div}(hX_{2}),
\end{equation}
which implies
\begin{equation}
	X_{2}(h)=0
	\label{eq-Lemma-X2h=0}
\end{equation}
by Lemma \ref{Lemma-zonal-div=0}.

On the other hand,
because $X_{1}$ is Killing,
we have
\begin{eqnarray}
	g(\nabla_{V}X_{1},W)=-g(\nabla_{V}X_{1},W)
	\label{eq-gVX1W=-gWX1V}
\end{eqnarray}	 
for any vector fields $V,W$ on $M$.
Substituting  
\eqref{eq-X1=hX2}
to 
\eqref{eq-gVX1W=-gWX1V},
we have
\begin{eqnarray}
	V(h)g(X_{2},W)
	+
	h
	g(\nabla_{V}X_{2},W)
	=
	-W(h)g(X_{2},V)
	-
	hg(\nabla_{W}X_{2},V)
	\label{eq-VX2WhgnablaX2W=}
\end{eqnarray}
on $U$.
Because $X_{2}$ is also Killing,
we have
\begin{eqnarray}
	g(\nabla_{V}X_{2},W)=-g(\nabla_{W}X_{2},V).
	\label{eq-gVX2W=-gWX2V}
\end{eqnarray}	 
Substituting 
\eqref{eq-gVX2W=-gWX2V}
to
\eqref{eq-VX2WhgnablaX2W=},
we have
\begin{eqnarray}
	V(h)g(X_{2},W)
	=
	-W(h)g(X_{2},V).
\end{eqnarray}
Taking $W=X_{2}$,
we have
\begin{eqnarray}
	V(h)\|X_{2}\|^{2}
	&=&
	-X_{2}(h)g(X_{2},V).
\end{eqnarray}
Therefore,
\eqref{eq-Lemma-X2h=0} implies
\begin{eqnarray}
	V(h)\|X_{2}\|^{2}
	=
	0
	\label{eq-V(h)X_{2}^{2}=0}
\end{eqnarray}
for any vector field $V$ on $U$.
We note that the zero set of $X_{2}$
\begin{equation}
	\{x\in M\mid \|X_{2}\|=0\}
\end{equation}
does not have interior point
by Lemma \ref{Lemma-unique-Kiiling}.
Therefore,
\eqref{eq-V(h)X_{2}^{2}=0} implies
\begin{equation}
	V(h)=0
\end{equation}
for any vector field $V$ on $U$.
Thus,
$h$ is constant on $U$.
This completes the proof.
\end{proof}

The second is a necessary condition
on a function $f$ for $Z=fX$ to be a zonal flow.

\begin{Lemma}
\label{Lemma-zonal-grad-collinear}
	Let 
	$Z=fX$ be a zonal flow on a Riemannian manifold $M$
	and
	$U_{0}:=\{x\in M\mid \operatorname{grad}(\|X\|^{2})\neq 0\}$.
	Then,
	$\operatorname{grad}(f^{2})$ and 
	$\operatorname{grad}(\|X\|^{2})$
	are collinear on $U_{0}$.
	More precisely,
	there exists a function $F$
	up to positive constant multiple 
	on $U_{0}$
	satisfying
	\begin{eqnarray}
		\operatorname{grad}(f^{2})
		&=&
		F\operatorname{grad}(\|X\|^{2}).
		\label{eq-Lemma-gradf^2=Fgrad}
	\end{eqnarray}
	Moreover,
	we have
	\begin{eqnarray}
		X(F)=0.
		\label{eq-Lemma-XF=0}
	\end{eqnarray}
\end{Lemma}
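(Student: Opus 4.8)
The plan is to extract the collinearity of $\operatorname{grad}(f^{2})$ and $\operatorname{grad}(\|X\|^{2})$ directly from the stationarity condition $P(\nabla_{Z}Z)=0$. By Lemma \ref{Lemma-zonal-nabla-grad} we have $\nabla_{Z}Z=-\tfrac{f^{2}}{2}\operatorname{grad}(\|X\|^{2})$, so condition (3) of Definition \ref{Def-zonal-flow-arbitrary} says that $f^{2}\operatorname{grad}(\|X\|^{2})=\operatorname{grad} p$ for some function $p$ on $M$ (absorbing the factor $-\tfrac12$ into $p$). The first step is therefore to take the exterior derivative of the $1$-form dual to $f^{2}\operatorname{grad}(\|X\|^{2})$: since it is exact, its differential vanishes, which gives
\begin{equation*}
\operatorname{grad}(f^{2})\wedge \operatorname{grad}(\|X\|^{2})=0
\end{equation*}
after identifying vectors with $1$-forms via $g$. (Concretely: $d(f^{2}\,d(\|X\|^{2})/2)=\tfrac12\,d(f^{2})\wedge d(\|X\|^{2})$.) On the open set $U_{0}$ where $\operatorname{grad}(\|X\|^{2})\neq 0$, this wedge being zero means exactly that the two gradients are proportional, so there is a scalar function $F$ on $U_{0}$ with $\operatorname{grad}(f^{2})=F\operatorname{grad}(\|X\|^{2})$; the function $F$ is uniquely determined pointwise on $U_{0}$ (not merely up to constant, although the statement only claims up to positive constant — I would recover $F$ as the ratio $|\operatorname{grad}(f^{2})|/|\operatorname{grad}(\|X\|^{2})|$ with an appropriate sign, or more cleanly as $g(\operatorname{grad}(f^{2}),\operatorname{grad}(\|X\|^{2}))/\|\operatorname{grad}(\|X\|^{2})\|^{2}$).

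For the second assertion $X(F)=0$, the idea is to apply $X$ to the defining relation $\operatorname{grad}(f^{2})=F\operatorname{grad}(\|X\|^{2})$ and use the Killing property of $X$. The key inputs are: (a) $X(\|X\|^{2})=0$ by Lemma \ref{Lemma-Killing-XXX=0}, and its consequence that the flow of $X$ preserves $\|X\|^{2}$, hence also preserves $\operatorname{grad}(\|X\|^{2})$ (since a Killing field commutes with the gradient operator: $\mathcal{L}_{X}\operatorname{grad} h=\operatorname{grad}(X(h))$ for any $h$); (b) $X(f)=0$ by Lemma \ref{Lemma-zonal-div=0} together with $\operatorname{div}(Z)=0$, hence $X(f^{2})=0$ and likewise $\mathcal{L}_{X}\operatorname{grad}(f^{2})=\operatorname{grad}(X(f^{2}))=0$. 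Applying $\mathcal{L}_{X}$ to both sides of $\operatorname{grad}(f^{2})=F\operatorname{grad}(\|X\|^{2})$ then gives $0=X(F)\operatorname{grad}(\|X\|^{2})+F\cdot 0$, and since $\operatorname{grad}(\|X\|^{2})\neq 0$ on $U_{0}$ we conclude $X(F)=0$ there.

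I expect the only genuinely delicate point to be bookkeeping about regularity and well-definedness of $F$ on all of $U_{0}$: a priori $\operatorname{grad}(f^{2})$ could vanish at some points of $U_{0}$, in which case $F$ must be $0$ there, and one should check that the ratio formula still produces a smooth (or at least continuous) $F$ on the connected components of $U_{0}$ — this is where the phrase ``up to positive constant multiple'' in the statement presumably enters, allowing a choice of sign on each component. The differential-forms computation and the Lie-derivative manipulation are both short and essentially formal once Lemmas \ref{Lemma-Killing-XXX=0}, \ref{Lemma-zonal-div=0}, and \ref{Lemma-zonal-nabla-grad} are in hand; the main conceptual content is simply recognizing that $P(\nabla_Z Z)=0$ is equivalent to $f^{2}\operatorname{grad}(\|X\|^{2})$ being a gradient, which immediately forces the curl-type obstruction $d(f^2)\wedge d(\|X\|^2)=0$.
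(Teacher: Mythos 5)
Your proposal is correct, and it is worth separating its two halves. For the collinearity statement your argument is essentially the paper's in different clothing: the paper writes $\tfrac{f^2}{2}\operatorname{grad}(\|X\|^2)=\operatorname{grad}p$ and exploits the symmetry of $\operatorname{Hess}_p$ to antisymmetrize $V(f^2)\,g(\operatorname{grad}(\|X\|^2),W)$, which is exactly the identity $d(f^2)\wedge d(\|X\|^2)=0$ that you obtain from $d\circ d=0$ applied to the dual $1$-form; the differential-forms phrasing is arguably cleaner and your explicit formula $F=g(\operatorname{grad}(f^2),\operatorname{grad}(\|X\|^2))/\|\operatorname{grad}(\|X\|^2)\|^2$ settles the smoothness of $F$ across the zero set of $\operatorname{grad}(f^2)$ more transparently than the paper does. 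For $X(F)=0$ your route is genuinely different: the paper applies Hessian symmetry a second time, now to $\operatorname{Hess}_{f^2}(X,W)$, and kills the extra term via $X(\|X\|^2)=0$ (Lemma \ref{Lemma-Killing-XXX=0}), whereas you apply $\mathcal{L}_X$ to the identity $\operatorname{grad}(f^2)=F\operatorname{grad}(\|X\|^2)$ and use that a Killing field satisfies $\mathcal{L}_X\operatorname{grad}h=\operatorname{grad}(X(h))$; both are valid and of comparable length, with your version making the geometric mechanism (the flow of $X$ preserves both $f$ and $\|X\|^2$, hence their gradients) more visible at the cost of invoking the commutation of $\mathcal{L}_X$ with $\operatorname{grad}$, which the paper never states. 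One small interpretive correction: the phrase ``up to positive constant multiple'' in the statement does not refer to a sign choice on components of $U_0$ (as you speculate) --- $F$ is pointwise determined once $f$ and $X$ are fixed --- but to the residual ambiguity $(f,X)\mapsto(f/C,CX)$ of Lemma \ref{Lemma-zonal-rep-unique}, under which $F$ rescales by the positive constant $C^{-4}$.
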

\begin{proof}
The uniqueness assertion
follows from 
\eqref{eq-Lemma-gradf^2=Fgrad}
because 
$f$ and $X$ are unique up to constant multiple
by Lemma \ref{Lemma-zonal-rep-unique}.
Because $Z$ is a zonal flow,
there 
exists a function $p$ on $M$
satisfying $\nabla_{Z}Z=-\operatorname{grad}p$.
This equation is equivalent to
\begin{equation}
\label{eq-Lemma-ffgrad=gradp}
	\frac{f^{2}}{2}\operatorname{grad}(\|X\|^{2})
	=
	\operatorname{grad}p
\end{equation}
by Lemma \ref{Lemma-zonal-nabla-grad}.
We write $\operatorname{Hess}_{p}$
	for the Hessian of $p$,
	namely,
	\begin{equation}
	\operatorname{Hess}_{p}(V,W)
	:=
		g(\nabla_{V}\operatorname{grad}(p),W)
	\end{equation}
	for any vector field $V,W$ on $M$.
	By \eqref{eq-Lemma-ffgrad=gradp},
	we have
	\begin{eqnarray}
		2\operatorname{Hess}_{p}(V,W)
		&=&
		g(\nabla_{V}(f^{2}\operatorname{grad}(\|X\|^{2})),W)
		\nonumber
		\\
		&=&
		V(f^{2})
		g(\operatorname{grad}(\|X\|^{2}),W)
		+
		f^{2}
		g(
		\nabla_{V}
		\operatorname{grad}(\|X\|^{2}),W)
		\nonumber
		\\
		&=&
		V(f^{2})
		g(\operatorname{grad}(\|X\|^{2}),W)
		+
		f^{2}
		\operatorname{Hess}_{\|X\|^{2}}(V,W).
		\label{eq-Lemma-Hess=gVf}
	\end{eqnarray}
	Recall that the Hessian is always symmetric,
namely,
we have
\begin{equation}
	\operatorname{Hess}_{p}(V,W)=\operatorname{Hess}_{p}(W,V),
	\quad
	\operatorname{Hess}_{\|X\|^{2}}(V,W)=\operatorname{Hess}_{\|X\|^{2}}(W,V).
\end{equation}
Thus,
\eqref{eq-Lemma-Hess=gVf}
implies
\begin{equation}
	V(f^{2})
		g(\operatorname{grad}(\|X\|^{2}),W)
		=
		W(f^{2})
		g(\operatorname{grad}(\|X\|^{2}),V).
\end{equation}
Therefore,
we have
\begin{equation}
		g(\operatorname{grad}(f^{2}),V)
		g(\operatorname{grad}(\|X\|^{2}),W)
		=
		g(\operatorname{grad}(f^{2}),W)
		g(\operatorname{grad}(\|X\|^{2}),V)
\end{equation}
for any vector field $V,W$ on $M$.
This equation 
easily
implies that 
the orthogonal complements
of 
$\operatorname{grad}(f^{2})$
and
$\operatorname{grad}(\|X\|^{2})$
are equal
on 
$U_{0}\cap \{x\in M \mid \operatorname{grad}(f^{2})\neq 0\}$,
where $U_{0}:=\{x\in M\mid \operatorname{grad}(\|X\|^{2})\neq 0\}$.
This completes the proof of \eqref{eq-Lemma-gradf^2=Fgrad}.

By \eqref{eq-Lemma-gradf^2=Fgrad},
we have
\begin{eqnarray*}
	\operatorname{Hess}_{f^{2}}(X,W)
	&=&
	g(\nabla_{X}\operatorname{grad}(f^{2}),W)
	\\
	&=&
	g(\nabla_{X}F\operatorname{grad}(\|X\|^{2}),W)
	\\
	&=&
	X(F)
	g(\operatorname{grad}(\|X\|^{2}),W)
	+
	F
	g(\nabla_{X}\operatorname{grad}(\|X\|^{2}),W)
	\\
	&=&
	X(F)W(\|X\|^{2})
	+
	F
	\operatorname{Hess}_{\|X\|^{2}}(X,W)
\end{eqnarray*}
for any vector field $W$ on $U_{0}=\{x\in M\mid \operatorname{grad}(\|X\|^{2})\neq 0\}$.
On the other hand,
\begin{eqnarray*}
	\operatorname{Hess}_{f^{2}}(X,W)
	&=&
	\operatorname{Hess}_{f^{2}}(W,X)
	\\
	&=&
	W(F)X(\|X\|^{2})
	+
	F
	\operatorname{Hess}_{\|X\|^{2}}(W,X)
	\\
	&=&
	F\operatorname{Hess}_{\|X\|^{2}}(X,W).
\end{eqnarray*}
by Lemma \ref{Lemma-Killing-XXX=0}.
These imply
\begin{eqnarray}
	X(F)W(\|X\|^{2})=0
	\label{eq-Lemma-XFWXX=0}
\end{eqnarray}
for any vector field $W$ on $U_{0}$.
Because $\|X\|^{2}$ is non-constant on $U_{0}$,
\eqref{eq-Lemma-XFWXX=0} implies
\eqref{eq-Lemma-XF=0}.
\end{proof}

It turns out that the signature of $F$ is important 
in Section \ref{Section-sufficient-MC>0}.
Therefore we make the following definition.
\begin{Definition}
\label{Def-zonal-signature}
	Let $Z$ be a zonal flow on a Riemannian manifold $M$.
	Then,
	we define $\operatorname{sgn}(Z):M\to \{-1,0,1\}$ by
	\begin{equation}
		\operatorname{sgn}(Z):=
		\begin{cases}
			\operatorname{sgn}(F) & \text{on }U_{0}:=\{
			x\in M\mid \operatorname{grad}(\|X\|^{2})\neq 0
			\}\\
			0& \text{on }M\backslash U_{0},
		\end{cases}
	\end{equation}
	where $F$ is a function defined in Lemma \ref{Lemma-zonal-grad-collinear}
	and $\operatorname{sgn}:{\mathbb R}\to\{-1,0,1\}$.
\end{Definition}

\begin{Definition}
\label{Def-zonal-positive}
	Let $Z$ be a zonal flow on a Riemannian manifold $M$.
	Then,
	we say that $Z$ is positive
	if $U^{+}:=\{x\in M\mid \operatorname{sgn}(Z)>0\}$
	is nonempty.
\end{Definition}

This definition is intrinsic for a zonal flow.
Namely, we have the following.
\begin{Lemma}
\label{Lemma-gZZZ}
	Let 
	$Z=fX$ be a zonal flow on a Riemannian manifold $M$.
	Then,
	we have
	\begin{equation}
		g(\operatorname{grad}(\|Z\|^{2})
		+
		2\nabla_{Z}Z,
		2\nabla_{Z}Z)
		=
		-f^{2}\|X\|^{2}
		g(
		\operatorname{grad}(f^{2}),
		\operatorname{grad}(\|X\|^{2})).
		\label{eq-Lemma-gZZZ-1}
	\end{equation}
	Moreover,
	if $F$ is a function on
	$U_{0}:=\{x\in M\mid \operatorname{grad}(\|X\|^{2})\neq 0\}$
	satisfying
	\eqref{eq-Lemma-gradf^2=Fgrad},
	then we have
	\begin{equation}
		g(\operatorname{grad}(\|Z\|^{2})
		+
		2\nabla_{Z}Z,
		2\nabla_{Z}Z)
		=
		-Ff^{2}\|X\|^{2}
		\|\operatorname{grad}(\|X\|^{2})\|^{2}
		\label{eq-Lemma-gZZZ-2}
	\end{equation}
	on $U_{0}$.
	In particular,
	we have
	\begin{equation}
		\operatorname{sgn}(Z)
		=
		-
		\operatorname{sgn}(
		g(\operatorname{grad}(\|Z\|^{2})
		+
		2\nabla_{Z}Z,
		2\nabla_{Z}Z)
		)
	\end{equation}
	on $M$.
\end{Lemma}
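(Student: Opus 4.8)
The identity \eqref{eq-Lemma-gZZZ-1} will follow from a short Leibniz-rule computation. Since $\|Z\|^{2}=g(fX,fX)=f^{2}\|X\|^{2}$, the plan is to first write
\begin{equation*}
	\operatorname{grad}(\|Z\|^{2})=\|X\|^{2}\operatorname{grad}(f^{2})+f^{2}\operatorname{grad}(\|X\|^{2}).
\end{equation*}
On the other hand, because $Z$ is a zonal flow we have $\operatorname{div}(Z)=0$, so Lemma \ref{Lemma-zonal-nabla-grad} gives $2\nabla_{Z}Z=-f^{2}\operatorname{grad}(\|X\|^{2})$. Adding these two expressions, the terms $f^{2}\operatorname{grad}(\|X\|^{2})$ cancel, leaving $\operatorname{grad}(\|Z\|^{2})+2\nabla_{Z}Z=\|X\|^{2}\operatorname{grad}(f^{2})$. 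Taking the $g$-inner product of this with $2\nabla_{Z}Z=-f^{2}\operatorname{grad}(\|X\|^{2})$ and using bilinearity then yields \eqref{eq-Lemma-gZZZ-1}.

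For \eqref{eq-Lemma-gZZZ-2}, I would restrict to $U_{0}$ and substitute the collinearity relation $\operatorname{grad}(f^{2})=F\operatorname{grad}(\|X\|^{2})$ from Lemma \ref{Lemma-zonal-grad-collinear} into the right-hand side of \eqref{eq-Lemma-gZZZ-1}; pulling the scalar $F$ out of $g$ and writing $g(\operatorname{grad}(\|X\|^{2}),\operatorname{grad}(\|X\|^{2}))=\|\operatorname{grad}(\|X\|^{2})\|^{2}$ gives the stated formula.

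It remains to deduce the sign identity, which I would verify pointwise, separating the cases $M\setminus U_{0}$ and $U_{0}$. On $M\setminus U_{0}$ one has $\operatorname{grad}(\|X\|^{2})=0$, so by \eqref{eq-Lemma-gZZZ-1} the quantity $g(\operatorname{grad}(\|Z\|^{2})+2\nabla_{Z}Z,2\nabla_{Z}Z)$ vanishes, and $\operatorname{sgn}(Z)=0$ there by Definition \ref{Def-zonal-signature}, so both sides agree. On $U_{0}$, I would first note that the zero set of $X$ is disjoint from $U_{0}$: at a point where $X=0$ the nonnegative function $\|X\|^{2}$ attains its minimum value $0$, hence $\operatorname{grad}(\|X\|^{2})=0$ there. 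Therefore $\|X\|^{2}>0$ on $U_{0}$, and of course $\|\operatorname{grad}(\|X\|^{2})\|^{2}>0$ on $U_{0}$. Moreover, wherever $f=0$ we have $\operatorname{grad}(f^{2})=2f\,\operatorname{grad}f=0$, so the collinearity relation forces $F=0$ at such points; consequently $\operatorname{sgn}(F)\operatorname{sgn}(f^{2})=\operatorname{sgn}(F)$ everywhere on $U_{0}$. Combining this with \eqref{eq-Lemma-gZZZ-2}, the sign of $-Ff^{2}\|X\|^{2}\|\operatorname{grad}(\|X\|^{2})\|^{2}$ equals $-\operatorname{sgn}(F)$ on $U_{0}$, which is $-\operatorname{sgn}(Z)$ by Definition \ref{Def-zonal-signature}; this settles both pieces.

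The only step requiring genuine care — and the point I expect to be the main (and only) obstacle — is this pointwise sign statement at the degenerate loci $\{f=0\}$ and $\{\|X\|=0\}$: the argument above shows they cause no trouble precisely because $\{\|X\|=0\}\subset M\setminus U_{0}$ and because $F$ vanishes wherever $f$ does. Everything else is routine bookkeeping with the Leibniz rule together with Lemmas \ref{Lemma-zonal-nabla-grad} and \ref{Lemma-zonal-grad-collinear}.
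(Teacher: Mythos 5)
Your proposal is correct and follows essentially the same route as the paper: compute $\operatorname{grad}(\|Z\|^{2})$ by the Leibniz rule, cancel against $2\nabla_{Z}Z=-f^{2}\operatorname{grad}(\|X\|^{2})$ from Lemma \ref{Lemma-zonal-nabla-grad}, and substitute the collinearity relation of Lemma \ref{Lemma-zonal-grad-collinear}. Your pointwise verification of the sign identity at the degenerate loci $\{f=0\}$ and $\{\|X\|=0\}$ is actually more detailed than the paper, which leaves that ``in particular'' step implicit, and your observations there (that $\{\|X\|=0\}\subset M\setminus U_{0}$ and that $F$ vanishes where $f$ does) are exactly what is needed to justify it.
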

\begin{proof}
By $Z=fX$
we have
	\begin{eqnarray*}
		\|Z\|^{2}
		&=&
		f^{2}\|X\|^{2},
		\\
		\operatorname{grad}(\|Z\|^{2})
		&=&
		\|X\|^{2}\operatorname{grad}(f^{2})
		+
		f^{2}\operatorname{grad}(\|X\|^{2}).
	\end{eqnarray*}
Moreover,
\eqref{eq-Lemma-nablaZZ} implies
\begin{eqnarray*}
	2\nabla_{Z}Z
	&=&
	-f^{2}\operatorname{grad}(\|X\|^{2}).
\end{eqnarray*}
These imply \eqref{eq-Lemma-gZZZ-1}.
Substituting \eqref{eq-Lemma-gradf^2=Fgrad},
we have \eqref{eq-Lemma-gZZZ-2}.
\end{proof}

\subsection{$S^{1}$-zonal flow}
In this section,
we introduce some class of zonal flows,
which is used in Section \ref{Section-sufficient-MC>0}
in order to
make a sufficient condition for the positivity of the
Misio{\l}ek curvature.
\begin{Definition}
\label{Def-zonal-S1}
	Let $Z=fX$ be a zonal flow on a Riemannian manifold $M$.
	Then,
	we say that $Z$ is a $S^{1}$-zonal flow
	if $X$ is induced by an $S^{1}$-action  on $M$.
\end{Definition}

Recall that $\operatorname{sgn}(Z)$
is defined in Definition \ref{Def-zonal-signature}.

\begin{Lemma}
\label{Lemma-U+-S1-inv-open}
	Let $Z$ be a positive $S^{1}$-zonal flow
	on a Riemannian manifold $M$.
	Then,
	$U^{+}:=\{x\in M \mid \operatorname{sgn}(Z)>0\}$ is 
	a $S^{1}$-invariant open subset of $M$.
\end{Lemma}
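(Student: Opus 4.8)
The plan is to show that $U^{+}$ is both open and $S^{1}$-invariant, treating these two properties separately.

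For \emph{openness}, I would argue that $\operatorname{sgn}(Z)$ is locally constant on the set $U_0 = \{x \in M \mid \operatorname{grad}(\|X\|^2) \neq 0\}$, which is itself open. By Lemma \ref{Lemma-gZZZ}, on $U_0$ we have $\operatorname{sgn}(Z) = -\operatorname{sgn}\big(g(\operatorname{grad}(\|Z\|^2) + 2\nabla_Z Z,\, 2\nabla_Z Z)\big)$, and the right-hand side is the sign of a continuous function (a smooth function built from $Z$, $\|Z\|^2$, and the connection). Hence $U^{+}$, being the preimage of $(0,\infty)$ under a continuous function restricted to the open set $U_0$, is open in $M$. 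Alternatively, one can use the characterization $\operatorname{sgn}(Z) = \operatorname{sgn}(F)$ on $U_0$ together with the fact that $F = g(\operatorname{grad}(f^2), \operatorname{grad}(\|X\|^2)) / \|\operatorname{grad}(\|X\|^2)\|^2$ is continuous on $U_0$; either way the point is that $U^{+}$ is the locus where a continuous function is strictly positive.

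For \emph{$S^1$-invariance}, the key observation is that the $S^1$-action generated by $X$ preserves the data from which $\operatorname{sgn}(Z)$ is defined. Let $\phi_t$ denote the flow of $X$. Since $X$ is Killing, each $\phi_t$ is an isometry of $(M,g)$, so $\phi_t$ preserves $\|X\|^2$ (indeed $X(\|X\|^2) = 0$ by Lemma \ref{Lemma-Killing-XXX=0}, so $\|X\|^2$ is constant along orbits) and also preserves $\operatorname{grad}(\|X\|^2)$, hence preserves $U_0$. Moreover $X(f) = 0$ by Lemma \ref{Lemma-zonal-div=0} (since $\operatorname{div}(Z) = 0$), so $f$, and therefore $f^2$ and $\operatorname{grad}(f^2)$, are also $\phi_t$-invariant. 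Consequently the function $g(\operatorname{grad}(f^2), \operatorname{grad}(\|X\|^2))$ defining $F$ is invariant under the isometries $\phi_t$, so $\operatorname{sgn}(Z) = \operatorname{sgn}(F)$ is constant along each $S^1$-orbit; in particular $U^{+} = \{\operatorname{sgn}(Z) > 0\}$ is a union of orbits, i.e.\ $S^1$-invariant. (One can phrase this even more cleanly via Lemma \ref{Lemma-gZZZ}: $Z = fX$ and $\nabla_Z Z$ are $\phi_t$-related to themselves because $\phi_t$ is an isometry fixing $f$ and $X$, so the quantity $g(\operatorname{grad}(\|Z\|^2) + 2\nabla_Z Z, 2\nabla_Z Z)$ is $\phi_t$-invariant, and its sign is $-\operatorname{sgn}(Z)$.)

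I do not expect a serious obstacle here; the statement is essentially bookkeeping about which ingredients are invariant under the Killing flow. The one place to be slightly careful is matching up the two regions in the definition of $\operatorname{sgn}(Z)$: on $M \setminus U_0$ we have $\operatorname{sgn}(Z) = 0$ by definition, so $U^{+} \subset U_0$ automatically, and the openness/invariance arguments above only need to be run on $U_0$. The mild subtlety is that $F$ is only defined up to a positive constant multiple (as noted in Lemma \ref{Lemma-zonal-grad-collinear}), but since positive rescaling does not change the sign, $\operatorname{sgn}(Z)$ and hence $U^{+}$ are well defined, and both the continuity and the invariance arguments are insensitive to this ambiguity.
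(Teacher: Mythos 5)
Your proof is correct and matches the paper's argument in substance: the paper likewise deduces $S^{1}$-invariance from the constancy of $F$ along orbits, though it simply cites $X(F)=0$ from Lemma \ref{Lemma-zonal-grad-collinear} rather than re-deriving it from the isometry flow of $X$, and it dismisses openness as obvious where you supply the (correct) continuity argument for $F$ on $U_{0}$.
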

\begin{proof}
	Let $Z=fX$ with a function $f$ and a Killing vector field on $M$.
	Take the function $F$ on $U_{0}:=\{x\in M \mid \operatorname{grad}(\|X\|^{2})\neq 0\}$
	defined in Lemma \ref{Lemma-zonal-grad-collinear}.
	Then,
	we have
	$$
	X(F)=0
	$$
	by Lemma \ref{Lemma-zonal-grad-collinear}.
	Thus,
	$F$ is constant on any $S^{1}$-orbit
	because $X$ is induced by the $S^{1}$-action.
	This completes the proof
	because it is obvious $U^{+}$ is open.
\end{proof}

We recall the principal orbit type theorem
for compact Lie groups.
\begin{Fact}[{\cite[(1.1.4) and Lem.~1.1.5]{FU-prin}}]
\label{Fact-prin-orbit-type-thm}
Let a compact Lie group $G$ act on a connected compact manifold $M$.
Then,
there exist
a homogeneous $G$-space $G/H$
and
a dense open $G$-invariant subset $M_{pr}$ of $M$
such that
for any $x\in M_{pr}$,
there exists an $G$-equivariant open embedding
\begin{equation}
	\phi_{x} : G/H\times {\mathbb R}^{\dim M-\dim G/H}
	\to M
\end{equation}
satisfying
\begin{equation}
	\phi(G/H\times \{0\})=G\cdot x.
\end{equation}
Here, $G$ acts on 
$G/H\times {\mathbb R}^{\dim M-\dim G/H}$
via the first factor.
\end{Fact}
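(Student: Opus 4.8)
The plan is to derive this from the differentiable slice theorem by an induction on $\dim M$; the statement is classical --- it is precisely \cite[(1.1.4) and Lem.~1.1.5]{FU-prin} --- so what follows is only a sketch of the usual route. First I would record the slice theorem in the form needed: for a smooth action of a compact Lie group $G$ on $M$ and any $x\in M$, the isotropy group $G_x$ acts linearly on the normal space $V_x:=T_xM/T_x(G\cdot x)$ (the slice representation), and there is a $G$-equivariant diffeomorphism from the associated bundle $G\times_{G_x}V_x$ onto an open $G$-invariant neighborhood of $G\cdot x$ sending $[e,0]$ to $x$; moreover the isotropy at $[g,v]$ is $g(G_x)_vg^{-1}$, so every orbit meeting this neighborhood has isotropy conjugate to a subgroup of $G_x$. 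Call $G\cdot x$ \emph{principal} when $G_x$ acts trivially on $V_x$; then the slice neighborhood is $G$-equivariantly $(G/G_x)\times V_x$ with $G$ acting through the first factor, and all of its orbits have type $(G_x)$.

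Next I would run the induction on $\dim M$ --- passing near an orbit to the $G_x$-action on the unit sphere $S(V_x)$, and at a fixed point of a connected $G$ to the $G$-action on $S(T_xM)$, so that the dimension strictly drops, with finite $G$ as the base case --- to prove that the $G_x$-action on the connected space $V_x$ has a \emph{unique} minimal isotropy type, realized on a dense subset of $V_x$. Two consequences follow. First, inside a slice neighborhood $G\times_{G_x}V_x$, a point $[g,v]$ lies on an orbit that is principal \emph{in $M$} exactly when $v$ has minimal $G_x$-isotropy: at such a $v$ the induced action on its normal space in $V_x$ is trivial by the inductive uniqueness, so the slice representation of $M$ at $[e,v]$ is trivial; such points are dense in the slice neighborhood, and they all have one type, namely the $G$-conjugacy class of the principal isotropy of $G_x$ on $V_x$. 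Second, since overlapping slice neighborhoods therefore share principal orbits, this type is constant on overlaps, hence --- $M$ being connected --- the same throughout; denote it $(H)$, so that $G/H$ is the homogeneous space of the statement and $k:=\dim M-\dim(G/H)$ the codimension there.

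Then I would set $M_{pr}:=\{x\in M:(G_x)=(H)\}$, the union of all orbits that are principal in $M$. It is $G$-invariant by construction; open, because every principal point has a slice neighborhood $\cong(G/H)\times V_x$ consisting entirely of points of type $(H)$; and dense, because by the previous paragraph every slice neighborhood already contains principal orbits. Finally, for $x\in M_{pr}$ the slice theorem identifies an open $G$-invariant neighborhood of $G\cdot x$ with $G\times_{G_x}V_x$, and since $G_x$ acts trivially on $V_x\cong\mathbb{R}^{k}$ this is $G$-equivariantly $(G/H)\times\mathbb{R}^{k}$ with $G$ acting through the first factor, the zero section corresponding to $G\cdot x$; the resulting composite is the required $G$-equivariant open embedding $\phi_x$.

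I expect the main obstacle to be the induction itself: phrasing it so that it simultaneously treats manifold actions and linear actions of a given dimension, verifying the base cases (finite $G$, and the fixed-point case handled via $S(T_xM)$), and --- inside it --- establishing the uniqueness and density of the minimal isotropy type of a slice representation, which is where connectedness of $V_x\cong\mathbb{R}^m$, respectively of $S^{m-1}$ for $m\geq2$ (the case $m=1$ being done by hand), feeds back to close the loop. Everything else is routine bookkeeping with the slice theorem, and for the purposes of this paper it suffices to invoke \cite{FU-prin}.
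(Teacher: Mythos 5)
The paper itself offers no proof of this Fact: it is imported verbatim from the literature (Uchida's \cite{FU-prin}; see also Bredon's book on compact transformation groups, Theorem IV.3.1), so there is no internal argument to compare yours against. Your sketch is the standard slice-theorem proof of the principal orbit type theorem and is sound in outline: the tube theorem, the definition of a principal orbit via triviality of the slice representation (equivalently, minimality of the isotropy type, since a closed subgroup of a compact Lie group cannot be conjugate to a proper closed subgroup of itself), the induction on $\dim M$ reducing to the $G_x$-action on $S(V_x)$, and the openness/density/connectedness argument for $M_{pr}$ are exactly the classical steps. Two small points of hygiene: the base case of the induction should be $\dim M=0$ (so $M$ is a finite set and connectedness forces a single orbit), not ``finite $G$''; and to land on the fixed model $G/H$ rather than $G/G_x$ you should compose with the equivariant identification $G/G_x\cong G/H$ coming from a conjugation realizing $(G_x)=(H)$. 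Neither affects correctness, and for the purposes of this paper the citation is all that is needed.
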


\begin{Corollary}
\label{Cor-prin-orbit-type-thm}
	Let $S^{1}$ act nontrivially on a connected compact manifold $M$.
Then,
there exist
a one-dimensional homogeneous $S^{1}$-space $N$
and
a dense open $S^{1}$-invariant subset $M_{pr}$ of $M$
such that
for any $x\in M_{pr}$,
there exists an $S^{1}$-equivariant open embedding
\begin{equation}
	\phi_{x} : N\times {\mathbb R}^{\dim M-\dim G/H}
	\to M
\end{equation}
satisfying
\begin{equation}
	\phi(N\times \{0\})=S^{1}\cdot x.
\end{equation}
\end{Corollary}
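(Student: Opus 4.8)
The plan is to obtain this statement as an immediate specialization of Fact \ref{Fact-prin-orbit-type-thm} to the group $G = S^{1}$. Applying that theorem with $G = S^{1}$ to the connected compact manifold $M$ produces a homogeneous $S^{1}$-space $S^{1}/H$ and a dense open $S^{1}$-invariant subset $M_{pr} \subset M$ together with, for each $x \in M_{pr}$, an $S^{1}$-equivariant open embedding $\phi_{x} \colon S^{1}/H \times {\mathbb R}^{\dim M - \dim S^{1}/H} \to M$ with $\phi_{x}(S^{1}/H \times \{0\}) = S^{1}\cdot x$. Setting $N := S^{1}/H$, the only point that needs to be checked is that $N$ is one-dimensional, i.e. that $\dim S^{1}/H = 1$.

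To see this, recall that the closed subgroups of $S^{1}$ are $S^{1}$ itself and the finite cyclic subgroups; in particular every proper closed subgroup of $S^{1}$ is finite, so $\dim S^{1}/H = 1$ unless $H = S^{1}$. Suppose $H = S^{1}$. Then for every $x \in M_{pr}$ the principal orbit $S^{1}\cdot x$ is a single point, i.e. $x$ lies in the fixed-point set $M^{S^{1}}$. Since $M^{S^{1}}$ is closed and $M_{pr}$ is dense in $M$, this forces $M^{S^{1}} = M$, contradicting the hypothesis that $S^{1}$ acts nontrivially. Hence $H$ is a proper, finite, cyclic subgroup of $S^{1}$, the space $N = S^{1}/H$ is a compact one-dimensional homogeneous $S^{1}$-space (equivariantly diffeomorphic to $S^{1}$ acting through the $|H|$-fold cover), and $\dim M - \dim S^{1}/H = \dim M - 1$. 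The embeddings $\phi_{x}$ and the relation $\phi_{x}(N \times \{0\}) = S^{1}\cdot x$ are then exactly those supplied by Fact \ref{Fact-prin-orbit-type-thm}.

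There is no real obstacle here: the argument is a direct transcription of the principal orbit type theorem, and the only substantive observation is the elementary remark that a dense set of $S^{1}$-fixed points forces the whole action to be trivial. This rules out the degenerate case $H = S^{1}$ and thereby pins down $N$ as a one-dimensional homogeneous $S^{1}$-space, which is all the corollary adds to Fact \ref{Fact-prin-orbit-type-thm}.
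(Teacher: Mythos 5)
Your proof is correct and follows essentially the same route as the paper: both specialize Fact \ref{Fact-prin-orbit-type-thm} to $G=S^{1}$ and then use the nontriviality of the action together with the density of $M_{pr}$ to force $\dim N=1$. The only (minor) difference is in how the degenerate case is excluded — the paper exhibits an open set of points with one-dimensional orbits and intersects it with $M_{pr}$, whereas you observe that $H=S^{1}$ would make the closed fixed-point set contain the dense set $M_{pr}$, hence equal $M$; both arguments are sound.
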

\begin{proof}
All assertion follows from Fact \ref{Fact-prin-orbit-type-thm}
except for the one-dimensionality of $N$.
By the nontriviality of the action,
	there exists a point $x_{0}\in M$
	such that the $S^{1}$-orbit of $x_{0}$
	is one-dimensional.
	By the continuity of the action,
	there exists an open neighborhood $W$ of $x_{0}$
	such that for any $x\in W$,
	the $S^{1}$-orbit of $x$
	is one-dimensional.
	This implies $N$ is one-dimensional
	because $M_{pr}\cap W$ is nonempty by the density of $M_{pr}$.
\end{proof}

The aim of this section
is to prove the following lemma,
which is used in Section \ref{Section-sufficient-MC>0}
in order to establish 
a sufficient
condition for the positivity of the
Micio{\l}ek curvature.

\begin{Lemma}
\label{Lemma-open-S1-emb}
	Let $Z$ be a positive $S^{1}$-zonal flow on a Riemannian manifold $M$.
	Then,
	there exist a one-dimensional homogeneous $S^{1}$-space $N$ and 
	a $S^{1}$-equivariant open embedding
	\begin{equation}
		\phi:N \times {\mathbb R}^{\dim M-1}
		\to
		M
	\end{equation}
	satisfying
	\begin{equation}
		\phi(N\times {\mathbb R}^{\dim M-1})\subset 
		U^{+}:=\{x\in M\mid \operatorname{sgn}(Z)>0\} .
	\end{equation}
\end{Lemma}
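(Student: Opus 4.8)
The plan is to combine the principal orbit type theorem (in the form of Corollary \ref{Cor-prin-orbit-type-thm}) with the positivity of $Z$ and the $S^{1}$-invariance of $U^{+}$ (Lemma \ref{Lemma-U+-S1-inv-open}). First I would record that by definition of a positive $S^{1}$-zonal flow, $U^{+}$ is nonempty; pick a point $q\in U^{+}$. Since $Z=fX$ and $\operatorname{sgn}(Z)(q)=+1$, we have $q\in U_{0}=\{x\in M\mid\operatorname{grad}(\|X\|^{2})\neq 0\}$, so in particular $X$ does not vanish on the $S^{1}$-orbit through $q$; hence the $S^{1}$-action is nontrivial and the orbit $S^{1}\cdot q$ is one-dimensional. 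This lets me invoke Corollary \ref{Cor-prin-orbit-type-thm} to get a one-dimensional homogeneous $S^{1}$-space $N$ and a dense open $S^{1}$-invariant subset $M_{pr}\subset M$ with the stated equivariant tube charts $\phi_{x}:N\times{\mathbb R}^{\dim M-1}\to M$ for every $x\in M_{pr}$.

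The next step is to locate a principal-orbit point inside $U^{+}$. Since $U^{+}$ is open (trivially, as $\operatorname{sgn}(Z)$ is built from $F$ which is continuous where defined, and $U^{+}$ is the sign-positive locus) and $M_{pr}$ is dense, the intersection $U^{+}\cap M_{pr}$ is nonempty; choose $x_{0}$ in it. Apply the tube chart $\phi:=\phi_{x_{0}}:N\times{\mathbb R}^{\dim M-1}\to M$, which is an $S^{1}$-equivariant open embedding with $\phi(N\times\{0\})=S^{1}\cdot x_{0}$. The remaining issue is that the image $\phi(N\times{\mathbb R}^{\dim M-1})$ need not lie inside $U^{+}$ — only the core orbit does. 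To fix this I would shrink: $\phi^{-1}(U^{+})$ is an open $S^{1}$-invariant neighborhood of $N\times\{0\}$ in $N\times{\mathbb R}^{\dim M-1}$ (invariance because $U^{+}$ is $S^{1}$-invariant and $\phi$ is equivariant). By compactness of $N$ and a tube-lemma argument, $\phi^{-1}(U^{+})$ contains $N\times B$ for some open ball $B\subset{\mathbb R}^{\dim M-1}$ around $0$; then compose $\phi$ with an equivariant diffeomorphism ${\mathbb R}^{\dim M-1}\xrightarrow{\sim}B$ (acting as the identity on the $N$ factor) to obtain the desired $S^{1}$-equivariant open embedding $\phi:N\times{\mathbb R}^{\dim M-1}\to M$ with image contained in $U^{+}$.

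The main obstacle is precisely this last shrinking step: one must ensure that the neighborhood of the core orbit can be taken of the product form $N\times B$, which is exactly where compactness of $N$ (a one-dimensional homogeneous $S^{1}$-space, hence a circle) enters via the tube lemma, and one must also check that the rescaling map can be chosen $S^{1}$-equivariantly — but since $S^{1}$ acts only on the $N$ factor in these coordinates, any radial diffeomorphism of the Euclidean factor is automatically equivariant, so this is routine once the tube-lemma step is in place. A minor point to address along the way is that Corollary \ref{Cor-prin-orbit-type-thm} is stated for compact $M$; if $M$ is merely a Riemannian manifold here, one should either add compactness to the hypotheses (consistent with the applications in Theorems \ref{Theorem-Main} and \ref{Theorem-Main-2}) or note that the conclusion is local near the orbit and only uses the slice theorem, which holds for proper actions.
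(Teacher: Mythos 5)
Your proposal is correct and follows essentially the same route as the paper's proof: invoke Corollary \ref{Cor-prin-orbit-type-thm} to get the principal-orbit tube charts, use openness and $S^{1}$-invariance of $U^{+}$ (Lemma \ref{Lemma-U+-S1-inv-open}) together with density of $M_{pr}$ to find a principal orbit inside $U^{+}$, and then shrink the transverse factor of the tube and reparametrize it by a diffeomorphism ${\mathbb R}^{\dim M-1}\simeq W$. Your extra care about the tube-lemma step and your remark that Corollary \ref{Cor-prin-orbit-type-thm} formally requires $M$ compact (which the paper's statement of the lemma omits but its applications satisfy) are both reasonable refinements of the same argument.
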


\begin{proof}
The assertion of this lemma is local,
we can assume $M$ is connected.
Moreover,
the positivity of $Z$ implies the $S^{1}$-action is nontrivial.
Thus,
by Corollary \ref{Cor-prin-orbit-type-thm},
there exist 
a homogeneous $S^{1}$-space $N$
and
a dense open subset $M_{pr}$ of $M$
	such that 
	for any $x\in M_{pr}$,
	there exists a $S^{1}$-equivariant open embedding
	\begin{eqnarray}
		\phi_{x}:N \times {\mathbb R}^{\dim M-1}
		&\to &
		M
		\label{eq-Lemma-phi-emb}
	\end{eqnarray}
	satisfying
\begin{equation}
	\phi(N\times \{0\})=S^{1}\cdot x.
\end{equation}
	
	On the other hand,
	$U^{+}=\{x\in M \mid \operatorname{sgn}(Z)>0\}$
	is nonempty because $Z$ is positive (see Definition \ref{Def-zonal-positive}).
	Because $M_{pr}$ 
	is dense by Corollary \ref{Cor-prin-orbit-type-thm}
	and $U^{+}$ is open by Lemma \ref{Lemma-U+-S1-inv-open},
	$U^{+}\cap M_{pr}$ is nonempty.
	Take $x_{0}\in U^{+}\cap M_{pr}$
	and consider the $S^{1}$-equivariant open embedding
	\begin{equation}
		\phi_{x_{0}}:N \times {\mathbb R}^{\dim M-1}
		\to 
		M.
	\end{equation}
	satisfying 
	\begin{equation}
	\phi(N\times \{0\})=S^{1}\cdot x_{0}
\end{equation}
	We note that
	$U^{+}$ is $S^{1}$-invariant open subset
	by Lemma \ref{Lemma-U+-S1-inv-open}.
	Thus,
	$x_{0}\in U^{+}$ implies
	\begin{equation}
		\phi_{x_{0}}(N \times \{0\})
		=
		S^{1}\cdot x_{0}\subset U^{+}
	\end{equation}
	Therefore,
	there exists 
    a sufficiently small open subset $W\subset {\mathbb R}^{\dim M-1}$,	
	we have
	\begin{equation}
		\phi_{x_{0}}(N\times W)
		\subset U^{+}
	\end{equation}
	because
	$U^{+}$ is $S^{1}$-invariant open subset
	and $\phi_{x_{0}}$ is $S^{1}$-equivariant.
	Taking some diffeomorphism $\psi:{\mathbb R}^{\dim M-1} \simeq W$
	and $\phi:=\phi_{x_{0}}\circ(\operatorname{id}_{N}\times \psi)$,
	we have the lemma.
\end{proof}

\section{Misio{\l}ek curvature of zonal flow}
\label{Section-Misiolek-zonal}
In this section,
we calculate the Misio{\l}ek curvature
for a zonal flow on an arbitrary compact Riemannian manifold.
Moreover,
we establish a sufficient condition for the
positivity of the Misio{\l}ek curvature.

\subsection{A formula of Misio{\l}ek curvature of zonal flow}
In this section,
we calculate of the Misio{\l}ek curvature
for a zonal flow
on an arbitrary compact Riemannian manifold $M$.

Set
\begin{eqnarray}
	\langle V,W\rangle &:=&
	\int_{M}g(V,W)\mu
	\\
	|V|^{2}
	&:=&
	\langle V,V\rangle
\end{eqnarray}
for any vector fields $V,W$ on $M$.
Note that we have
\begin{eqnarray}
	\langle
	fV,W
	\rangle
	=
	\langle
	V,fW
	\rangle
	\label{eq-fVW=VfW}
\end{eqnarray}
for any function $f$ on $M$.

Recall that the Misio{\l}ek curvature ${\mathfrak m}{\mathfrak c}$
is defined by
\begin{equation}
	{\mathfrak m}{\mathfrak c}_{Z,Y}
	:=
	-|[Z,Y]|^{2}
	-
	\langle
	Z,
	[[Z,Y],Y]
	\rangle
	\label{eq-def-MC-ZY}
	\end{equation}
for a stationary solution $Z$ of the Euler equation \eqref{E-eq-intro}
and a divergence-free vector field $Y$.
We note that
any zonal flow on $M$ 
is a stationary solution of \eqref{E-eq-intro}
by Proposition \ref{Prop-zonal-stationary}
(see Definition \ref{Def-zonal-flow-arbitrary} for the definition of a zonal flow).
\begin{Lemma}
\label{Lemma-zonal-MC-formula}
	Let $M$ be a compact Riemannian manifold 
	and $Z=fX$ be a zonal flow
	with a function $f$
	and a Killing vector field $X$ on $M$.
	Then,
	we have
	\begin{eqnarray}
		&&{\mathfrak m}{\mathfrak c}_{Z,Y}
		\label{eq-Lem-MC-fXY}
		\\
		&=&
		-
		|f[X,Y]|^{2}
		-
		\langle
		f^{2}X,
		[[X,Y],Y]
		\rangle
		+
		\langle
		2Y(f^{2})X,
		[X,Y]
		\rangle
		-
		\int_{M}
		Y^{2}(f^{2})
		\|X\|^{2}
		\mu.
		\nonumber
		\end{eqnarray}
\end{Lemma}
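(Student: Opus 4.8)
The plan is to start from the definition \eqref{eq-def-MC-ZY} with $Z = fX$ and systematically expand each of the two terms, using the fact that $\langle fV, W\rangle = \langle V, fW\rangle$ and that $f$ is a scalar that can be moved freely inside $g(\cdot,\cdot)$, together with the Leibniz rule for the Lie bracket $[fX, Y] = f[X,Y] - Y(f)X$. First I would compute $[Z,Y] = [fX, Y] = f[X,Y] - Y(f)X$, so that
\begin{equation*}
	-|[Z,Y]|^{2} = -|f[X,Y]|^{2} + 2\langle f[X,Y], Y(f)X\rangle - \int_M (Y(f))^2 \|X\|^2\,\mu.
\end{equation*}
Then the main work is the second term $-\langle Z, [[Z,Y],Y]\rangle = -\langle fX, [[fX,Y],Y]\rangle$: I would expand $[[fX,Y],Y] = [f[X,Y] - Y(f)X, Y]$ using Leibniz twice, collect the terms, and pair against $fX$ using \eqref{eq-fVW=VfW} to pull the scalar $f$ onto $X$, producing $f^2 X$ paired with various brackets.

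The key identities to keep track of are the derivative-of-scalar terms: expanding $[f[X,Y],Y]$ gives $f[[X,Y],Y] - Y(f)[X,Y]$, and expanding $[Y(f)X, Y]$ gives $Y(f)[X,Y] - Y(Y(f))X = Y(f)[X,Y] - Y^2(f)X$. Substituting and pairing with $fX$, the two $Y(f)[X,Y]$ contributions combine (one from each expansion, with a relative sign that makes them add, yielding a $2Y(f)$ coefficient), and the $Y^2(f)$ term pairs with $fX$ to give $\int_M f\,Y^2(f)\,\|X\|^2\,\mu$. At this stage I would have the formula expressed in terms of $Y(f)$ and $Y^2(f)$ rather than $Y(f^2)$ and $Y^2(f^2)$. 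The final step is the bookkeeping identity relating these: since $Y(f^2) = 2fY(f)$ and $Y^2(f^2) = 2(Y(f))^2 + 2fY^2(f)$, one can substitute to rewrite $-\int_M (Y(f))^2\|X\|^2\mu + \int_M fY^2(f)\|X\|^2\mu$ and the cross terms into the stated combination $+\langle 2Y(f^2)X, [X,Y]\rangle - \int_M Y^2(f^2)\|X\|^2\mu$; one checks that $-(Y(f))^2 + fY^2(f) = -\tfrac12 Y^2(f^2) + \cdots$ so the coefficients match up after collecting the $\langle f[X,Y], Y(f)X\rangle$-type terms from the first part as well.

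The main obstacle I expect is purely organizational rather than conceptual: there are several terms of the form (scalar derivative) $\times$ (bracket or vector field) that must be tracked through two nested Leibniz expansions, and the signs must be reconciled so that the $Y(f)[X,Y]$ contributions from the first term $-|[Z,Y]|^2$ and from the second term $-\langle Z,[[Z,Y],Y]\rangle$ combine into exactly the single coefficient $2Y(f^2)$ claimed in \eqref{eq-Lem-MC-fXY}. A secondary subtlety is that the compactness of $M$ is used implicitly: no integration by parts is literally needed here (every manipulation is pointwise plus the symmetry \eqref{eq-fVW=VfW}), but compactness guarantees all the integrals are finite and that $Z$ is indeed a stationary solution via Proposition \ref{Prop-zonal-stationary}, so that \eqref{eq-def-MC-ZY} applies. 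Once the two expansions are laid out side by side, collecting like terms and invoking $Y(f^2) = 2fY(f)$, $Y^2(f^2) = 2fY^2(f) + 2(Y(f))^2$ yields \eqref{eq-Lem-MC-fXY} directly.
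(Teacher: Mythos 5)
Your strategy coincides with the paper's own proof: expand $[Z,Y]=f[X,Y]-Y(f)X$ and $[[Z,Y],Y]=f[[X,Y],Y]-2Y(f)[X,Y]+Y^{2}(f)X$ by the Leibniz rule, substitute into \eqref{eq-def-MC-ZY}, move the scalar $f$ across the pairing via \eqref{eq-fVW=VfW}, and finally pass from $Y(f),Y^{2}(f)$ to $Y(f^{2}),Y^{2}(f^{2})$. The cross terms do combine exactly as you say: each of $-|[Z,Y]|^{2}$ and $-\langle Z,[[Z,Y],Y]\rangle$ contributes $+\langle Y(f^{2})X,[X,Y]\rangle$, giving the $\langle 2Y(f^{2})X,[X,Y]\rangle$ term.

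The gap is in your final bookkeeping step, and it is twofold. First, a sign: the $Y^{2}(f)X$ piece of $[[Z,Y],Y]$ enters ${\mathfrak m}{\mathfrak c}_{Z,Y}$ through $-\langle Z,[[Z,Y],Y]\rangle$, so the two leftover integrals are $-\int_{M}\bigl((Y(f))^{2}+fY^{2}(f)\bigr)\|X\|^{2}\mu$, not the combination $-\int_{M}(Y(f))^{2}\|X\|^{2}\mu+\int_{M}fY^{2}(f)\|X\|^{2}\mu$ you wrote. Second, once the sign is fixed, the identity you correctly record, $Y^{2}(f^{2})=2(Y(f))^{2}+2fY^{2}(f)$, gives $(Y(f))^{2}+fY^{2}(f)=\tfrac12 Y^{2}(f^{2})$ exactly, so the computation yields $-\tfrac12\int_{M}Y^{2}(f^{2})\|X\|^{2}\mu$ rather than the $-\int_{M}Y^{2}(f^{2})\|X\|^{2}\mu$ asserted in \eqref{eq-Lem-MC-fXY}; your claim that ``the coefficients match up'' cannot be carried out as written. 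You should be aware that the paper's own proof arrives at the stated formula only by invoking the identity ``$Y(f)^{2}+fY(f)=Y^{2}(f^{2})$'', which (even reading $fY(f)$ as $fY^{2}(f)$) is off by the same factor of two, so the discrepancy originates in the statement of the lemma rather than in your method. Since only the sign of this last term is used in Lemma \ref{Lemma-MC-commute-formula} and Corollary \ref{Cor-zonal-MC>0}, the downstream positivity results are unaffected, but a correct write-up should either carry the factor $\tfrac12$ or flag the inconsistency.
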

\begin{proof}
We have
\begin{eqnarray*}
	[Z,Y]
	&=&
	[fX,Y]
	\\
	&=&
	f[X,Y]
	-
	Y(f)
	X,
	\\
	~[[Z,Y],Y]
	&=&
	[f[X,Y]
	-
	Y(f)
	X,
	Y]
	\\
	&=&
	f[[X,Y],Y]
	-
	2Y(f)[X,Y]
	+
	Y^{2}(f)X.
\end{eqnarray*}
Therefore,
we have
\begin{eqnarray*}
	|[Z,Y]|^{2}
	&=&
	|f[X,Y]|^{2}
	-
	2
	\langle
	f[X,Y],
	Y(f)X
	\rangle
	+
	|Y(f)X|^{2}
	\end{eqnarray*}
	and
	\begin{eqnarray*}
		\langle
	Z,
	[[Z,Y],Y]
	\rangle
	&=&
	\langle
	fX,
	f[[X,Y],Y]
	\rangle
	-
	\langle
	fX,
	2Y(f)[X,Y]
	\rangle
	+
	\langle
	fX,
	Y^{2}(f)X
	\rangle
	\end{eqnarray*}
Therefore,
we have
\begin{eqnarray*}
	{\mathfrak m}{\mathfrak c}_{Z,Y}
	&=&
	-
	|f[X,Y]|^{2}
	+
	2
	\langle
	fY(f)[X,Y],
	X
	\rangle
	-
	\langle 
	Y(f)^{2}X,
	X
	\rangle
	\\
	&&
	-
	\langle
	f^{2}X,
	[[X,Y],Y]
	\rangle
	+
	2
	\langle
	fY(f)X,
	[X,Y]
	\rangle
	-
	\langle
	fY^{2}(f)X,
	X
	\rangle
\end{eqnarray*}
by
\eqref{eq-fVW=VfW}
and
\eqref{eq-def-MC-ZY}.
We note that
\begin{eqnarray}
	2fY(f)&=&Y(f^{2}),
	\\
	Y(f)^{2}+fY(f),
	&=&
	Y^{2}(f^{2}).
\end{eqnarray}
Then,
\begin{eqnarray*}
	{\mathfrak m}{\mathfrak c}_{Z,Y}
	&=&
	-
	|f[X,Y]|^{2}
	+
	2
	\langle
	Y(f^{2})[X,Y],
	X
	\rangle
	\\
	&&\quad
	-
	\langle
	f^{2}X,
	[[X,Y],Y]
	\rangle
	-
	\langle
	Y^{2}(f^{2})X,
	X
	\rangle.
\end{eqnarray*}
We note
\begin{eqnarray}
	\langle
	Y^{2}(f^{2})X,
	X
	\rangle
	&=&
	\int_{M}
	g(Y^{2}(f^{2})X,X)\mu
	\\
	&=&
	\int_{M}
	Y^{2}(f^{2})
	\|X\|^{2}\mu.
\end{eqnarray}
This completes the proof of 
\eqref{eq-Lem-MC-fXY}.
%Moreover, 
%\eqref{eq-IntY^2=IntYY}
%	follows from the Stokes theorem 
%	because $Y$ is divergence free.
\end{proof}

Before stating the next lemma,
we recall that 
if $Z=fX$ is a zonal flow,
then,
there exists a function $F$ 
unique up to positive constant multiple
on 
$U_{0}:=\{x\in M\mid \operatorname{grad}(\|X\|^{2})\neq 0\}$
satisfying
\begin{equation}
\operatorname{grad}(f^{2})=F\operatorname{grad}(\|X\|^{2})
\label{eq-totyu-gradff=Fgrad}
\end{equation}
on $U_{0}$
by Lemma \ref{Lemma-zonal-grad-collinear}.
\begin{Lemma}
\label{Lemma-MC-commute-formula}
Let $Z=fX$ be a zonal flow on a compact Riemannian manifold $M$
and $F$ be a function on $U_{0}$
satisfying \eqref{eq-totyu-gradff=Fgrad}.
Then,
for any divergence-free vector field $Y$ on $M$
and
$[X,Y]=0$,
we have
\begin{eqnarray*}
	{\mathfrak m}{\mathfrak c}_{Z,Y}
	=
	\int_{U_{0}}FY(\|X\|^{2})^{2}\mu.
\end{eqnarray*}
\end{Lemma}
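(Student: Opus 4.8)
The plan is to start from the formula of Lemma~\ref{Lemma-zonal-MC-formula} and exploit the hypothesis $[X,Y]=0$ to collapse it. Since $[X,Y]=0$ we also have $[[X,Y],Y]=0$, so three of the four terms in \eqref{eq-Lem-MC-fXY} vanish and only
\[
	{\mathfrak m}{\mathfrak c}_{Z,Y}
	=
	-\int_{M}Y^{2}(f^{2})\,\|X\|^{2}\,\mu
\]
survives.

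Next I would integrate by parts, using that $Y$ is divergence-free and $M$ is a closed manifold: for any smooth functions $a,b$ on $M$ one has $\int_{M}Y(ab)\,\mu=\int_{M}\operatorname{div}(abY)\,\mu=0$, hence $\int_{M}Y(a)\,b\,\mu=-\int_{M}a\,Y(b)\,\mu$. Taking $a=Y(f^{2})$ and $b=\|X\|^{2}$ turns the display above into
\[
	{\mathfrak m}{\mathfrak c}_{Z,Y}
	=
	\int_{M}Y(f^{2})\,Y(\|X\|^{2})\,\mu .
\]

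The final step is to recognize this integrand on $U_{0}$ and show it vanishes off $U_{0}$. On $M\backslash U_{0}=\{x\in M\mid\operatorname{grad}(\|X\|^{2})=0\}$ we have $Y(\|X\|^{2})=g(\operatorname{grad}(\|X\|^{2}),Y)=0$ as a function, so the integral reduces to one over $U_{0}$; and on $U_{0}$, Lemma~\ref{Lemma-zonal-grad-collinear} gives $\operatorname{grad}(f^{2})=F\operatorname{grad}(\|X\|^{2})$, hence $Y(f^{2})=g(\operatorname{grad}(f^{2}),Y)=F\,Y(\|X\|^{2})$. Substituting yields
\[
	{\mathfrak m}{\mathfrak c}_{Z,Y}
	=
	\int_{U_{0}}F\,Y(\|X\|^{2})^{2}\,\mu ,
\]
which is the assertion.

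I do not expect any real obstacle here; the only subtle point is that $\operatorname{grad}(\|X\|^{2})$ vanishes \emph{identically} on the closed set $M\backslash U_{0}$, so that $Y(\|X\|^{2})$ vanishes there pointwise (not merely almost everywhere) and the reduction to an integral over $U_{0}$ is legitimate even though $F$ is only defined on $U_{0}$. The integration by parts is routine, requiring only that $abY$ be $C^{1}$ and that $M$ have no boundary, both of which are in force.
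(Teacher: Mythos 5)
Your proposal is correct and follows essentially the same route as the paper's proof: collapse the formula of Lemma \ref{Lemma-zonal-MC-formula} using $[X,Y]=0$, integrate by parts via $\operatorname{div}(Y)=0$ on the closed manifold, restrict to $U_{0}$ because $Y(\|X\|^{2})=g(\operatorname{grad}(\|X\|^{2}),Y)$ vanishes identically off $U_{0}$, and substitute \eqref{eq-totyu-gradff=Fgrad}. No gaps.
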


\begin{proof}
	By Lemma \ref{Lemma-zonal-MC-formula}
	and the assumption $[X,Y]=0$,
	we have
	\begin{eqnarray}
		{\mathfrak m}{\mathfrak c}_{Z,Y}=-\int_{M}Y^{2}(f^{2})\|X\|^{2}\mu.
	\end{eqnarray}
	Applying the Stokes theorem,
	we have
	\begin{eqnarray}
		{\mathfrak m}{\mathfrak c}_{Z,Y}
		&=&
		\int_{M}Y(f^{2})Y(\|X\|^{2})\mu
		\\
		&=&
		\int_{M}
		g(\operatorname{grad}(f^{2}),Y)Y(\|X\|^{2})\mu
		\\
		&=&
		\int_{U_{0}}
		g(\operatorname{grad}(f^{2}),Y)Y(\|X\|^{2})\mu
	\end{eqnarray}
	because
	\begin{eqnarray*}
		Y(\|X\|^{2})=
		g(\operatorname{grad}(\|X\|^{2}),Y)
		=0
	\end{eqnarray*}
	on $M\backslash U_{0}=\{x\in M\mid \operatorname{grad}(\|X\|^{2})=0\}$.
	Then,
	\eqref{eq-totyu-gradff=Fgrad}
	implies
	\begin{eqnarray}
		{\mathfrak m}{\mathfrak c}_{Z,Y}
		&=&
		\int_{U_{0}}
		Fg(\operatorname{grad}(\|X\|^{2}),Y)Y(\|X\|^{2})\mu
		\\
		&=&
		\int_{U_{0}}
		FY(\|X\|^{2})^{2}\mu.
	\end{eqnarray}
	This completes the proof.
\end{proof}

Recall that 
$\operatorname{sgn}(Z)$ is the signature of $F$
for a zonal flow $Z$,
see Definition \ref{Def-zonal-signature}
and Lemma \ref{Lemma-gZZZ}.

\begin{Corollary}
\label{Cor-zonal-MC>0}
	Let $Z$ be a zonal flow on a compact Riemannian manifold $M$
	and $Y$ a vector field on $M$.
	Suppose that
	$Y$ satisfies the following.
	\begin{enumerate}[(a)]
	\item $\operatorname{div}(Y)=0$,
	\item $[X,Y]=0$.
		\item $\operatorname{supp}(Y)\subset U^{+}:=\{x\in M\mid \operatorname{sgn}(Z)>0\}$.
		\item $Y(\|X\|^{2})\neq 0$ on $U^{+}$.
	\end{enumerate}
Then,
we have
\begin{eqnarray*}
	{\mathfrak m}{\mathfrak c}_{Z,Y}
	>0.
\end{eqnarray*}
\end{Corollary}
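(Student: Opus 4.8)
The statement to prove is Corollary \ref{Cor-zonal-MC>0}, which asserts that under hypotheses (a)--(d) the Misio\l ek curvature ${\mathfrak m}{\mathfrak c}_{Z,Y}$ is strictly positive. My plan is to simply feed these hypotheses into Lemma \ref{Lemma-MC-commute-formula} and then argue that the resulting integral has a strictly positive integrand on a set of positive measure. Concretely, hypotheses (a) and (b) are exactly the assumptions "$\operatorname{div}(Y)=0$ and $[X,Y]=0$'' needed to invoke Lemma \ref{Lemma-MC-commute-formula}, so we immediately get
\begin{equation*}
{\mathfrak m}{\mathfrak c}_{Z,Y}
=
\int_{U_{0}}F\,Y(\|X\|^{2})^{2}\,\mu ,
\end{equation*}
where $F$ is the function from Lemma \ref{Lemma-zonal-grad-collinear} defined on $U_{0}=\{x\in M\mid \operatorname{grad}(\|X\|^{2})\neq 0\}$.

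First I would restrict the domain of integration. By hypothesis (c), $\operatorname{supp}(Y)\subset U^{+}$, and moreover $Y(\|X\|^{2})=g(\operatorname{grad}(\|X\|^{2}),Y)$ vanishes wherever $Y$ vanishes; also, on $M\setminus U_{0}$ we have $\operatorname{grad}(\|X\|^{2})=0$, hence $Y(\|X\|^{2})=0$ there too. Therefore the integrand is supported in $U^{+}\cap U_{0}$; but by Definition \ref{Def-zonal-positive} and Definition \ref{Def-zonal-signature}, $U^{+}=\{x\in M\mid \operatorname{sgn}(Z)>0\}\subset U_{0}$ (since $\operatorname{sgn}(Z)=0$ off $U_{0}$), so $U^{+}\cap U_{0}=U^{+}$ and we may rewrite
\begin{equation*}
{\mathfrak m}{\mathfrak c}_{Z,Y}
=
\int_{U^{+}}F\,Y(\|X\|^{2})^{2}\,\mu .
\end{equation*}

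Next I would check the sign of the integrand pointwise on $U^{+}$. On $U^{+}$ we have $\operatorname{sgn}(Z)=\operatorname{sgn}(F)>0$ by Definition \ref{Def-zonal-signature}, so $F>0$ there; and by hypothesis (d), $Y(\|X\|^{2})\neq 0$ on $U^{+}$, so $Y(\|X\|^{2})^{2}>0$ there. Hence the integrand $F\,Y(\|X\|^{2})^{2}$ is strictly positive at every point of $U^{+}$. It then remains only to observe that $U^{+}$ has positive volume: since $Z$ is positive, $U^{+}$ is nonempty, and (by Lemma \ref{Lemma-U+-S1-inv-open} in the $S^{1}$ case, or more simply because $U^{+}$ is open by the remark in the proof of Lemma \ref{Lemma-U+-S1-inv-open} / Definition \ref{Def-zonal-positive}) it is a nonempty open subset of $M$, hence $\mu(U^{+})>0$. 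A strictly positive measurable function integrated over a set of positive measure gives a strictly positive integral, so ${\mathfrak m}{\mathfrak c}_{Z,Y}>0$.

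**Main obstacle.** There is essentially no hard step here: the corollary is a direct assembly of Lemma \ref{Lemma-MC-commute-formula} with the sign bookkeeping encoded in Definitions \ref{Def-zonal-signature}--\ref{Def-zonal-positive}. The only point requiring a little care is confirming that $U^{+}$ has positive $\mu$-measure rather than merely being nonempty; this is handled by noting $U^{+}$ is open (which follows from $F$ being continuous on $U_{0}$, so $\{F>0\}$ is open, intersected with the open set $U_{0}$). If one wanted to be fully self-contained one might also remark that such a $Y$ satisfying (a)--(d) actually exists — but that is the content of the subsequent construction (e.g.\ Lemma \ref{Lemma-open-S1-emb} and Appendix \ref{Appendix-proof-of-Lemma-Y-existence}), not of this corollary, whose statement takes $Y$ as given.
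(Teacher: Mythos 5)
Your proof is correct and takes essentially the same route as the paper: invoke Lemma \ref{Lemma-MC-commute-formula} via hypotheses (a)--(b), cut the integral down to $U^{+}$ using (c), and conclude from $F>0$ on $U^{+}$ together with (d). The only difference is that you make explicit the (implicit in the paper) observation that $U^{+}$ is a nonempty open set and hence has positive measure.
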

\begin{proof}
	Let $F$ be the function for $Z$
	satisfying \eqref{eq-totyu-gradff=Fgrad}
	on $U_{0}:=\{x\in M\mid \operatorname{grad}(\|X\|^{2})\neq 0\}$.
	Then,
	by the assumption on $Y$,
	Lemma \ref{Lemma-MC-commute-formula} implies
	\begin{eqnarray*}
		{\mathfrak m}{\mathfrak c}_{Z,Y}
	&=&
	\int_{U_{0}}FY(\|X\|^{2})^{2}\mu
	\\
	&=&
	\int_{U^{+}}FY(\|X\|^{2})^{2}\mu.
	\end{eqnarray*}
	Then,
	because
	$F$ is positive on $U^{+}$ by definition
	and 
	$Y(\|X\|^{2})\neq 0$ by the assumption,
	we have the corollary.
\end{proof}

\subsection{A sufficient condition for ${\mathfrak m}{\mathfrak c}>0$}
\label{Section-sufficient-MC>0}
In this section,
we give a sufficient condition for ${\mathfrak m}{\mathfrak c}_{Z,Y}>0$.
Namely,
we prove Theorem \ref{Theorem-Main}.

\begin{proof}[Proof of Theorem \ref{Theorem-Main}]
By Corollary \ref{Cor-zonal-MC>0},
it is sufficient to show that
there exists a vector field $Y$ on $M$,
which satisfies the following conditions:
\begin{enumerate}[(a)]
\item $\operatorname{div}(Y)=0$,
	\item $[X,Y]=0$,
		\item $\operatorname{supp}(Y)\subset U^{+}:=\{x\in M\mid \operatorname{sgn}(Z)>0\}$,
		\item $Y(\|X\|^{2})\neq 0$ on $U^{+}$.
	\end{enumerate}

On the other hand,
by Lemma \ref{Lemma-open-S1-emb},
there exist a one-dimensional homogeneous $S^{1}$-space $N$ and 
	a $S^{1}$-equivariant open embedding
	\begin{equation}
		\phi:N \times {\mathbb R}^{\dim M-1}
		\to
		M
	\end{equation}
	satisfying
	\begin{equation}
		\phi(N\times {\mathbb R}^{\dim M-1})\subset 
		U^{+}:=\{x\in M\mid \operatorname{sgn}(Z)>0\}.
	\end{equation}
By this embedding, we regard 
\begin{equation}
	N\times {\mathbb R}^{\dim M-1} \subset U^{+}.
\end{equation}
Thus,
this proposition 
follows from 
Lemma \ref{Lemma-existence-Y-nonzero}
given below
by substituting $h=\|X\|^{2}$
and $\omega=\mu$.
We note that 
Lemma \ref{Lemma-existence-Y-nonzero} is applicable 
because $X(\|X\|^{2})=0$ by Lemma \ref{Lemma-Killing-XXX=0} 
and
$\|X\|^{2}$ is non-constant by the non-geodesic assumption of $Z$
(see also Corollary \ref{Cor-geodesic-equiv}).
\end{proof}

\begin{Lemma}
\label{Lemma-existence-Y-nonzero}
	Let $N$ be a a one-dimensional homogeneous $S^{1}$-space,
	$\omega$ be a $S^{1}$-invariant volume form on $N\times {\mathbb R}^{k}$,
	$X$ a vector field on $N\times {\mathbb R}^{k}$
	induced by the $S^{1}$-action,
	and $h$ a non-constant function on $N\times {\mathbb R}^{k}$
	satisfying $X(h)=0$.
	Then, if $k\geq 2$, there exists
	a compactly supported divergence-free vector field $Y$ on $N\times {\mathbb R}^{k}$ satisfying $[X,Y]=0$
	such that $Y(h)\neq 0$.
\end{Lemma}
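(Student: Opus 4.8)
The plan is to work entirely in the coordinates provided by the $S^1$-action. Since $N$ is a one-dimensional homogeneous $S^1$-space, it is diffeomorphic to $S^1$ (with $S^1$ acting by rotation), so we may identify $N\times{\mathbb R}^k$ with $S^1\times{\mathbb R}^k$ with coordinates $(\theta,x_1,\dots,x_k)$, where $X=\partial_\theta$ generates the action. The condition $X(h)=0$ says $h=h(x_1,\dots,x_k)$ depends only on the ${\mathbb R}^k$ factor, and $h$ is non-constant there. First I would reduce the $S^1$-invariant volume form: averaging shows $\omega$ is $S^1$-invariant, but I actually want a cleaner normalization. The key point is that I will look for $Y$ of the form $Y=Y(x_1,\dots,x_k)$, i.e. a vector field independent of $\theta$ with no $\partial_\theta$-component; then $[X,Y]=[\partial_\theta,Y]=0$ automatically, and $Y(h)$ depends only on $x$. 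So the problem collapses to a problem purely on ${\mathbb R}^k$: find a compactly supported vector field $W$ on ${\mathbb R}^k$ with $Y(h)=W(h)\not\equiv 0$ and with $Y$ divergence-free with respect to $\omega$.

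The next step is to handle the divergence-free condition. Write $\omega = \rho(\theta,x)\,d\theta\wedge dx_1\wedge\cdots\wedge dx_k$ with $\rho>0$; $S^1$-invariance of $\omega$ forces $\rho=\rho(x)$ independent of $\theta$. For a $\theta$-independent vector field $Y=\sum_{i=1}^k a_i(x)\partial_{x_i}$, the divergence with respect to $\omega$ is $\operatorname{div}_\omega Y = \rho^{-1}\sum_i \partial_{x_i}(\rho\, a_i)$, so $Y$ is divergence-free iff the vector field $\rho Y$ on ${\mathbb R}^k$ is divergence-free in the ordinary (Euclidean) sense. Thus it suffices to produce a compactly supported vector field $V=\rho Y$ on ${\mathbb R}^k$ with $\operatorname{div}_{\mathrm{eucl}} V = 0$ and $V(h)\not\equiv 0$, and then set $Y=\rho^{-1}V$.

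The construction on ${\mathbb R}^k$ is where $k\geq 2$ is used. Since $h$ is non-constant, pick a point $p$ with $dh_p\neq 0$; near $p$ choose coordinates (or just work in the original $x$-coordinates after a rotation) so that $\partial_{x_1}h(p)\neq 0$, hence $\partial_{x_1}h\neq 0$ on a small ball $B$ around $p$. On $B$ I take $V = \psi(x)\,\big(\partial_{x_1}g\,\partial_{x_2} - \partial_{x_2}g\,\partial_{x_1}\big)$ for suitable compactly supported functions — more simply, let $V = \operatorname{curl}$-type field: set $V = \partial_{x_2}\Phi\,\partial_{x_1} - \partial_{x_1}\Phi\,\partial_{x_2}$ for a bump function $\Phi$ compactly supported in $B$; such a $V$ is automatically divergence-free (it is a two-dimensional Hamiltonian/rotational field in the $x_1x_2$-plane, extended trivially in the remaining coordinates) and compactly supported. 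Then $V(h) = \partial_{x_2}\Phi\,\partial_{x_1}h - \partial_{x_1}\Phi\,\partial_{x_2}h$, and I must choose $\Phi$ so this is not identically zero on $B$. If $\partial_{x_1}h$ and $\partial_{x_2}h$ are not proportional on $B$ this is clearly arrangeable; if they are proportional, use that $\partial_{x_1}h\neq 0$ and choose $\Phi$ depending only on $x_2$ near a point where we still need a transverse direction — the cleanest fix is to instead rotate coordinates in the $x_1x_2$-plane, or simply choose $\Phi(x)=\alpha(x_1)\beta(x_2)$ and compute $V(h)=\alpha\beta'\partial_{x_1}h-\alpha'\beta\,\partial_{x_2}h$; evaluating at a point where $\partial_{x_1}h\neq 0$ and choosing $\alpha,\beta'$ nonzero there while $\alpha'=0$ there (possible since we have freedom in the bump profiles) makes $V(h)\neq 0$ at that point.

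The main obstacle is precisely this last step: ensuring $V(h)\not\equiv 0$ while keeping $V$ exactly divergence-free and compactly supported. Divergence-free compactly supported fields on ${\mathbb R}^k$ form a large space (any $k=2$ Hamiltonian field, or more generally $V=*d\xi$ for a compactly supported $(k-2)$-form $\xi$), so genericity makes $V(h)\not\equiv 0$ expected; the work is to pin down one explicit choice and verify it cannot be forced to zero by the constraint. I expect this to be routine but requires care in the degenerate case where $dh$ has a special form along the chosen directions — resolved by the coordinate freedom since $dh_p\neq 0$ means at least one partial is nonzero and we may rotate so that two of the coordinate directions are "generic" for $h$ near $p$.
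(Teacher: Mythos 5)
Your proposal is correct and follows essentially the same route as the paper: both reduce to ${\mathbb R}^{k}$ using the $S^{1}$-invariance of $\omega$ (the paper via a $G$-equivariant isomorphism of vector-field spaces, you via explicit coordinates on $S^{1}\times{\mathbb R}^{k}$), both absorb the general volume form through the identity $\operatorname{div}_{\rho\mu_{0}}(Y)=\rho^{-1}\operatorname{div}_{\mu_{0}}(\rho Y)$, and both produce the field from compactly supported rotational/Hamiltonian fields in two of the Euclidean coordinates at a point where $dh\neq 0$. The only cosmetic difference is that the paper phrases the last step as surjectivity of the evaluation map plus a contradiction argument, whereas you construct the field directly; just make sure your bump $\Phi$ is compactly supported in all $k$ variables (multiplying by a cutoff in $x_{3},\dots,x_{k}$ preserves divergence-freeness since the field has no components in those directions).
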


\begin{proof}
Although this is probably obvious,
	we prove this lemma in Appendix \ref{Appendix-proof-of-Lemma-Y-existence}
	for the completeness.
\end{proof}

\section{2D ellipsoid case}
\label{Section-2D-ellipsoid}
In this section,
we show that Definition \ref{Def-zonal-flow-arbitrary}
is equivalent to the definition of \cite[(4.3)]{TY1}
in the case that $M$ is a two-dimensional ellipsoid.

\subsection{Setting}
In this section,
we calculate some formulae on two-dimensional
ellipsoid.

Let
$$
M_{a}^{2}
:=
\{
(x,y,z)\in{\mathbb R}^{3}
\mid
x^{2}+y^{2}
=
a^{2}(1-z^{2})
\}.
$$
Take the spherical coordinate (\cite[Sect.~4]{TY1})
\begin{equation*}
\begin{array}{cccccc}
\phi
&
:
&
(-d,d)\times (-\pi,\pi)
&
\to 
&
M^{2}_{a}
&
\\
&&
(r,\theta)
&
\mapsto
&
(c_{1}(r)\cos\theta,
c_{1}(r)\sin\theta,
c_{2}(r)
).
\end{array}
\end{equation*}
Note that
$c(r)$ satisfies 
$\dot{c}_{1}(r)^{2}+
\dot{c}_{2}(r)^{2}=1$
for any $r\in (-d,d)$,
namely,
$c(r)$ is parameterized by arc length
and $d$ is some suitable positive constant.
Moreover,
$c_{1}(r)$ is a non-constant function.

Let $g$ be a Riemannian metric on $M^{2}_{a}$ induced by ${\mathbb R}^{3}$.
Then,
we have
\begin{eqnarray}
	g=(g_{ij})=(g(\partial_{i},\partial_{j}))=
	\begin{pmatrix}
		1&0\\
		0&c_{1}^{2}
	\end{pmatrix},
	\label{eq-gij-2D-ellipsoid}
\end{eqnarray}
where we make a coorespondence
\begin{equation}
	1 \mapsto r,\qquad
	2\mapsto \theta,\end{equation}
Then, 
the inverse matrix of $g$ is
\begin{equation}
	g^{-1}=(g^{ij})=
	\begin{pmatrix}
		1&0\\
		0&c_{1}^{-2}
	\end{pmatrix}.
\end{equation}
and
\begin{eqnarray}
	\operatorname{grad}p&=&
	\partial_{r}p\partial_{r}+c_{1}^{-2}\partial_{\theta}p\partial_{\theta}
	\label{eq-grad-on-2D-ellipsoid}
\end{eqnarray}	
for any function $p$ on $M^{2}_{a}$.
Recall that the Christoffel symbols are given by
\begin{equation}
	\Gamma^{k}_{ij}
	=
	\frac{1}{2}
	\left(
	\sum_{a=1}^{2}g^{ka}
	\left(
	\partial_{i}g_{ja}
	+
	\partial_{j}g_{ia}
	-
	\partial_{a}g_{ij}
	\right)
	\right).
\end{equation}
Thus, we have
\begin{equation}
\Gamma^{1}_{22}=-c_{1}\partial_r c_{1}, \quad
\Gamma^{2}_{12}=
\Gamma^{2}_{21}=\frac{\partial_r c_{1}}{c_{1}}.
\label{eq-Christoffel-2D-ellipsoid}
\end{equation}
Thus,
we have
\begin{eqnarray}
(\nabla_{\partial_{i}}\partial_{j})
&=&
	\begin{pmatrix}
		\nabla_{\partial_{1}}\partial_{1}&
		\nabla_{\partial_{1}}\partial_{2}&
\\
		\nabla_{\partial_{2}}\partial_{1}&
		\nabla_{\partial_{2}}\partial_{2}&
	\end{pmatrix}
	\label{eq-nabla-ij-on-2D-ellipsoid}
	\\
	&=&
	\begin{pmatrix}
		0&
		\frac{\partial_r c_{1}}{c_{1}}\partial_{2}
		\\
		\frac{\partial_r c_{1}}{c_{1}}\partial_{2}&
		-c_{1}\partial_r c_{1}
		\partial_{1}
	\end{pmatrix}.
	\nonumber
\end{eqnarray}

\subsection{Killing vector fields on $M^{2}_{a}$}
In this section,
we classify Killing vector fields on $M^{2}_{a}$.

\begin{Lemma}
\label{Lemma-Killing-on-2D-ellipsoid}
	The space of Killing vector fields on $M^{2}_{a}$
	is spanned by 
	\begin{equation}
		\partial_{\theta}.
	\end{equation}
\end{Lemma}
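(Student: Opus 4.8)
The plan is to show that the space of Killing vector fields on the surface of revolution $M_a^2$ is exactly one-dimensional, spanned by the rotational field $\partial_\theta$. There are two things to verify: that $\partial_\theta$ is Killing, and that no other independent Killing field exists.

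For the first part, I would compute directly using the metric \eqref{eq-gij-2D-ellipsoid} that $\mathcal{L}_{\partial_\theta} g = 0$, i.e. that $\partial_\theta$ generates the isometric $S^1$-action $(r,\theta)\mapsto(r,\theta+s)$. Since $g = dr^2 + c_1(r)^2 d\theta^2$ has coefficients independent of $\theta$, this is immediate: $\partial_\theta g_{ij} = 0$ for all $i,j$, so the Killing equation \eqref{eq-Killing-Def-VW} holds (or one checks $g(\nabla_V \partial_\theta, W) + g(\nabla_W \partial_\theta, V) = 0$ using the Christoffel symbols \eqref{eq-Christoffel-2D-ellipsoid}).

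For the second part — which I expect to be the main obstacle, at least notationally — write a general vector field as $X = A(r,\theta)\partial_r + B(r,\theta)\partial_\theta$ and expand the Killing equations $g(\nabla_{\partial_i} X, \partial_j) + g(\nabla_{\partial_j} X, \partial_i) = 0$ for $(i,j) \in \{(1,1),(1,2),(2,2)\}$ using \eqref{eq-nabla-ij-on-2D-ellipsoid}. This yields a first-order PDE system: the $(1,1)$ equation gives $\partial_r A = 0$; the $(2,2)$ equation gives $\partial_\theta B + (\partial_r c_1 / c_1) A = 0$ (up to the factor $c_1^2$); and the $(1,2)$ equation couples $\partial_\theta A$ and $\partial_r B$. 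One then differentiates and uses that $c_1$ is a genuinely non-constant function of $r$ — this is the key hypothesis that rules out the extra isometries present on the round sphere (where $c_1 = \cos$ and two extra rotational Killing fields exist; the ellipsoid $a \neq 1$ breaks this). Careful bookkeeping of the ODEs should force $A \equiv 0$ and $B$ to be a constant, so $X$ is a scalar multiple of $\partial_\theta$.

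An alternative cleaner route, if one wants to avoid the PDE grind: invoke that the isometry group of $M_a^2$ (for $a\neq 1$) is compact and, by classification of surfaces of revolution, at most one-dimensional unless the profile curve has extra symmetry; since the dimension of the space of Killing fields equals $\dim \mathrm{Isom}(M_a^2)$, and the $S^1$-rotation already accounts for one dimension, a dimension count combined with the observation that any additional Killing field would have to generate an isometry not fixing the rotation axis (contradicting that $c_1$ is non-constant and hence the "poles" $r = \pm d$ are the unique points where the $S^1$-orbit degenerates) finishes it. I would present the direct PDE computation as the main argument, since it is self-contained and uses only the formulas already assembled in this section, and I expect the only real work to be the elementary but slightly tedious elimination in the ODE system.
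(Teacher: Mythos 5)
Your approach is the same as the paper's: the paper's entire proof is to write the Killing equation $g(\nabla_{\partial_i}X,\partial_j)=-g(\nabla_{\partial_j}X,\partial_i)$ in the coordinate frame and assert the result follows "by easy but lengthy computation," which is exactly the PDE elimination you outline (and your system $\partial_r A=0$, $c_1^2\partial_\theta B+c_1\dot c_1 A=0$, $c_1^2\partial_r B+\partial_\theta A=0$ is the correct one). One caution on the step you defer to "careful bookkeeping": the hypothesis that actually kills $A$ is \emph{not} that $c_1$ is non-constant --- $c_1=\cos$ on the round sphere is non-constant and yet extra Killing fields exist. Cross-differentiating your second and third equations gives $A''(\theta)=\bigl(c_1\ddot c_1-\dot c_1^{\,2}\bigr)A(\theta)$, so nonzero $A$ forces $c_1\ddot c_1-\dot c_1^{\,2}$ to be constant (equivalently, constant Gaussian curvature); it is the failure of this for the ellipsoid with $a\neq 1$ (a condition the paper leaves implicit in this lemma) that forces $A\equiv 0$, after which $B$ is constant. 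With that correction the argument closes as you describe.
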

\begin{proof}
By \eqref{eq-Killing-Def-VW},
a vector field $X$ on $M^{2}_{a}$
is Killing if and only if
\begin{equation}
	g(\nabla_{\partial_{i}}X,\partial_{j})
	=
	-
	g(\nabla_{\partial_{j}}X,\partial_{i})
\end{equation}
holds for any $i,j\in\{1,2\}$.
From this,
we have the lemma 
by easy but lengthy computation.
\end{proof}
\begin{Lemma}
\label{Lemma-Killing-S1-on-2D-ellipsoid}
	Any Killing vector field on $M^{2}_{a}$ 
	is induced by a $S^{1}$-action.
\end{Lemma}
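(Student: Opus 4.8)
The plan is to combine Lemma \ref{Lemma-Killing-on-2D-ellipsoid} with an explicit description of the rotation action of the circle on $M^{2}_{a}$. By that lemma every Killing vector field on $M^{2}_{a}$ is a constant multiple $c\,\partial_{\theta}$ of $\partial_{\theta}$, so it suffices to produce, for each $c\in\mathbb{R}$, a smooth $S^{1}$-action on $M^{2}_{a}$ whose fundamental vector field is $c\,\partial_{\theta}$.

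First I would handle $\partial_{\theta}$ itself. The circle acts on $\mathbb{R}^{3}$ by rotation in the $(x,y)$-plane, $(x,y,z)\mapsto(x\cos\alpha-y\sin\alpha,\,x\sin\alpha+y\cos\alpha,\,z)$, and since the defining equation $x^{2}+y^{2}=a^{2}(1-z^{2})$ of $M^{2}_{a}$ involves only the rotation-invariant quantity $x^{2}+y^{2}$, this restricts to a smooth $S^{1}$-action on $M^{2}_{a}$. Reading it off in the spherical coordinate $\phi$ of Section \ref{Section-2D-ellipsoid} it becomes $(r,\theta)\mapsto(r,\theta+\alpha)$, so differentiating at $\alpha=0$ identifies its fundamental vector field with $\partial_{\theta}$.

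Next I would pass to a general multiple $c\,\partial_{\theta}$. If $c=0$ the field is zero, which is induced by the trivial $S^{1}$-action, so assume $c\neq 0$. The flow of $c\,\partial_{\theta}$ is $(r,\theta)\mapsto(r,\theta+ct)$ in the coordinate $\phi$, hence is complete and periodic of period $2\pi/|c|$; it therefore descends to a smooth action of the circle group $\mathbb{R}/(2\pi/|c|)\mathbb{Z}\cong S^{1}$ with fundamental vector field $c\,\partial_{\theta}$. Equivalently, one simply reparametrizes the rotation action above.

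There is no serious obstacle here; the only point that needs care is to read ``induced by an $S^{1}$-action'' up to reparametrization of the circle --- equivalently, as ``the flow is complete and periodic'' --- since a nonzero scalar multiple of the generator of the standard $S^{1}=\mathbb{R}/2\pi\mathbb{Z}$ rotation need not close up at parameter $2\pi$. With that convention in place, everything reduces to Lemma \ref{Lemma-Killing-on-2D-ellipsoid} together with the one-line computation of the fundamental vector field of the rotation in the $(r,\theta)$ coordinate.
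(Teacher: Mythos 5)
Your proof is correct and follows essentially the same route as the paper: reduce to $\partial_{\theta}$ via Lemma \ref{Lemma-Killing-on-2D-ellipsoid} and exhibit the rotation action $(r,\theta)\mapsto(r,\theta+\alpha)$ whose fundamental vector field is $\partial_{\theta}$. Your extra care about nonzero scalar multiples $c\,\partial_{\theta}$ (reparametrizing the circle so the flow closes up with period $2\pi/|c|$) is a reasonable refinement of a point the paper passes over silently, but it does not change the argument.
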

\begin{proof}
	This is obvious Lemma \ref{Lemma-Killing-on-2D-ellipsoid}.
	In fact,
	$\partial_{\theta}$ is induced by
	a $S^{1}$-action
	\begin{equation}
		e^{it}\cdot (r,\theta):=(r,\theta+t),
	\end{equation}
	where $e^{it}\in \{z\in{\mathbb C}\mid |z|=1\}\simeq S^{1}$.
\end{proof}
\begin{Lemma}
\label{Lemma-Killing-non-geodesic-on-2D-ellipsoid}
	Any Killing vector field on $M^{2}_{a}$ is non-geodesic.
\end{Lemma}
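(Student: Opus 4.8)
The plan is to reduce the claim to the classification of Killing fields already established, combined with Lemma~\ref{Lemma-Killing-nabla}. First I would invoke Lemma~\ref{Lemma-Killing-on-2D-ellipsoid}: every Killing vector field $X$ on $M^2_a$ is of the form $X = c\,\partial_\theta$ for some constant $c\in\mathbb{R}$. The degenerate case $c=0$ gives the zero field, which is trivially $\nabla_X X = 0$ but is not a genuine flow; accordingly I would read the statement as referring to nonzero Killing fields, i.e.\ $c\neq 0$ (this is only a matter of convention and costs nothing).

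Next, using the explicit metric \eqref{eq-gij-2D-ellipsoid}, I would compute $\|X\|^2 = g(c\,\partial_\theta, c\,\partial_\theta) = c^2 g_{22} = c^2 c_1(r)^2$. Since $c_1$ is a non-constant function of $r$ — this is part of the standing hypotheses on the spherical parametrization $\phi$ — the function $\|X\|^2 = c^2 c_1(r)^2$ is non-constant on $M^2_a$ whenever $c\neq 0$.

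Finally I would apply Lemma~\ref{Lemma-Killing-nabla}, which yields $2\nabla_X X = -\operatorname{grad}(\|X\|^2)$; as $\|X\|^2$ is non-constant, $\operatorname{grad}(\|X\|^2)\neq 0$ and hence $\nabla_X X\neq 0$. Equivalently, by Corollary~\ref{Cor-geodesic-equiv}, any zonal flow $Z = fX$ built from such an $X$ satisfies $\nabla_Z Z\neq 0$, so $X$ (and any such $Z$) is non-geodesic in the sense of Definition~\ref{Def-zonal-geodesic-arbitrary}.

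There is essentially no genuine obstacle in this lemma: once Lemma~\ref{Lemma-Killing-on-2D-ellipsoid} pins down the Killing fields up to scalar, the statement is an immediate consequence of Lemma~\ref{Lemma-Killing-nabla} (or Corollary~\ref{Cor-geodesic-equiv}), the only substantive input being the non-constancy of $c_1$, which is assumed in the setup. The one-line computation $\|X\|^2 = c^2 c_1(r)^2$ is the entire content.
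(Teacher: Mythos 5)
Your proof is correct and follows essentially the same route as the paper: the paper also computes $\|X\|^{2}=c_{1}^{2}$ from \eqref{eq-gij-2D-ellipsoid}, notes that $c_{1}$ is non-constant, and concludes via Corollary~\ref{Cor-geodesic-equiv}. Your extra care about the scalar $c$ and the trivial case $c=0$ is harmless and matches the intended reading.
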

\begin{proof}
	We have
	$$
	\|X\|^{2}=c_{1}^{2}
	$$
	by \eqref{eq-gij-2D-ellipsoid}.
	We note that $c_{1}$ is non-constant.
	Thus,
	Corollary \ref{Cor-geodesic-equiv} implies the lemma.
\end{proof}

\subsection{Zonal flows on $M^{2}_{a}$}
In this section,
we classify zonal flows on $M^{2}_{a}$.

\begin{Lemma}
\label{Lemma-classify-zonal-on-2D-ellipsoid}
Let $Z$ be a zonal flow in the sense of Definition \ref{Def-zonal-flow-arbitrary} on $M^{2}_{a}$.
Then,
there exists a one-variable function $f=f(r)$
such that
\begin{equation}
	Z=f(r)\partial_{\theta}.
\end{equation}	
\end{Lemma}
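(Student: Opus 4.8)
The plan is to combine the classification of Killing fields on $M^2_a$ (Lemma \ref{Lemma-Killing-on-2D-ellipsoid}) with the divergence-free condition and the structure imposed on $f$ by Lemma \ref{Lemma-zonal-grad-collinear}. First, by Definition \ref{Def-zonal-flow-arbitrary}, a zonal flow $Z$ admits a representation $Z = f X$ with $f$ a function and $X$ a Killing vector field on $M^2_a$. By Lemma \ref{Lemma-Killing-on-2D-ellipsoid}, every Killing field is a constant multiple of $\partial_\theta$, so $X = c\,\partial_\theta$ for some $c\in\mathbb R$; absorbing $c$ into $f$ (using the uniqueness up to constant in Lemma \ref{Lemma-zonal-rep-unique}, or simply redefining $f$), we may write $Z = f\,\partial_\theta$ with $f$ a function on $M^2_a$, i.e.\ $f = f(r,\theta)$ in the coordinates $(r,\theta)$.

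Next I would show $f$ does not depend on $\theta$. Since $Z$ is divergence-free, Lemma \ref{Lemma-zonal-div=0} applied with $X = \partial_\theta$ gives $\partial_\theta f = X(f) = 0$, which is exactly the statement that $f = f(r)$ is a one-variable function. This is the key point and it is short. One should also check consistency with condition (3) of Definition \ref{Def-zonal-flow-arbitrary}: by Lemma \ref{Lemma-zonal-nabla-grad} we have $\nabla_Z Z = -\tfrac{f^2}{2}\operatorname{grad}(\|\partial_\theta\|^2) = -\tfrac{f^2}{2}\operatorname{grad}(c_1^2)$ using \eqref{eq-gij-2D-ellipsoid}, and since $c_1 = c_1(r)$, this is $-f^2 c_1 \dot c_1\,\partial_r$ by \eqref{eq-grad-on-2D-ellipsoid}; as $f = f(r)$, this field is a gradient (namely $-\operatorname{grad} p$ with $p$ an antiderivative in $r$ of $f(r)^2 c_1(r)\dot c_1(r)$), so (3) holds automatically. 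Thus any $Z = f(r)\partial_\theta$ with $f = f(r)$ is genuinely a zonal flow, confirming the description is not vacuous.

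The main obstacle is essentially bookkeeping rather than a real difficulty: one must be careful that "function on $M^2_a$" means a function pulled back through the chart $\phi$, and that the chart covers $M^2_a$ up to a set of measure zero (the meridian $\theta = \pm\pi$ and possibly the poles), so that the condition $\partial_\theta f = 0$ on the chart domain does determine $f$ globally as a function of $r$ alone by continuity. No lengthy computation is needed beyond what is already recorded in \eqref{eq-gij-2D-ellipsoid}–\eqref{eq-grad-on-2D-ellipsoid} and Lemmas \ref{Lemma-zonal-div=0}, \ref{Lemma-zonal-nabla-grad}, \ref{Lemma-Killing-on-2D-ellipsoid}.
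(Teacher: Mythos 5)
Your proposal is correct and follows essentially the same route as the paper: identify $X$ as a multiple of $\partial_{\theta}$ via Lemma \ref{Lemma-Killing-on-2D-ellipsoid}, deduce $\partial_{\theta}f=0$ from $\operatorname{div}(Z)=0$ via Lemma \ref{Lemma-zonal-div=0}, and verify that condition (3) of Definition \ref{Def-zonal-flow-arbitrary} is automatic by exhibiting a primitive $p$ with $\nabla_{Z}Z=-\operatorname{grad}p$. The only cosmetic difference is that you compute $\nabla_{Z}Z$ through Lemma \ref{Lemma-zonal-nabla-grad} and $\operatorname{grad}(c_{1}^{2})$ rather than directly from the Christoffel symbols in \eqref{eq-nabla-ij-on-2D-ellipsoid}, which yields the same expression.
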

\begin{proof}
	By Lemma \ref{Lemma-Killing-on-2D-ellipsoid}
	and Definition \ref{Def-zonal-flow-arbitrary},
	there exists a function $f(r,\theta)$ on $M^{2}_{a}$
	such that
	\begin{equation}
		Z=f(r,\theta)\partial_{\theta}.
	\end{equation}
	However,
	$\operatorname{div}(Z)=0$ and Lemma \ref{Lemma-zonal-div=0} imply
	\begin{equation}
		\partial_{\theta}f=0.
	\end{equation}
	Thus,
	$f=f(r)$ only depends on the variable $r$.
	Moreover,
	we have
	\begin{equation}
		\nabla_{Z}Z=f^{2}\nabla_{\partial_{\theta}}\partial_{\theta}
		=
		-
		f^{2}c_{1}\partial_r c_{1}
		\partial_{2}
	\end{equation}
	by \eqref{eq-nabla-ij-on-2D-ellipsoid}.
	Let $p$ be a primitive function of $f^{2}c_{1}\partial_r c_{1}$,
	namely we define $p$ by 
	$$
	\partial_{r}p
	=
	f^{2}c_{1}\partial_r c_{1}.
	$$
	Then,
	we have
	\begin{equation}
		\nabla_{Z}Z=-\operatorname{grad}p
	\end{equation}
	by \eqref{eq-grad-on-2D-ellipsoid}.
	This completes the lemma.
\end{proof}
\begin{Remark}
	Lemma \ref{Lemma-classify-zonal-on-2D-ellipsoid}
	implies that
	Definition \ref{Def-zonal-flow-arbitrary}
	is 
	equivalent to 
	\cite[(4.3)]{TY1}.
\end{Remark}

\begin{Corollary}
\label{Cor-any-zonal-is-S1-geodesic-on-2D-ellipsoid}
	Any zonal flow on $M^{2}_{a}$
	is a non-geodesic $S^{1}$-zonal flow.
\end{Corollary}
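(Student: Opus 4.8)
The plan is to verify the two defining conditions of a ``non-geodesic $S^{1}$-zonal flow'' (Definitions \ref{Def-zonal-S1} and \ref{Def-zonal-geodesic-arbitrary}) for an arbitrary zonal flow $Z$ on $M^{2}_{a}$, using the classification of zonal flows obtained in Lemma \ref{Lemma-classify-zonal-on-2D-ellipsoid} together with the structural facts about $\partial_{\theta}$ established just above.

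First I would invoke Lemma \ref{Lemma-classify-zonal-on-2D-ellipsoid} to write any zonal flow on $M^{2}_{a}$ as $Z=f(r)\partial_{\theta}$ for a one-variable function $f$, so that the Killing field in the representation $Z=fX$ is (up to scalar) $X=\partial_{\theta}$. The $S^{1}$-zonal property is then immediate from Lemma \ref{Lemma-Killing-S1-on-2D-ellipsoid}: the Killing field $\partial_{\theta}$ is induced by the $S^{1}$-action $e^{it}\cdot(r,\theta)=(r,\theta+t)$, which is exactly the condition in Definition \ref{Def-zonal-S1}. For the non-geodesic property, I would apply Corollary \ref{Cor-geodesic-equiv}: since $\|X\|^{2}=g(\partial_{\theta},\partial_{\theta})=c_{1}(r)^{2}$ by \eqref{eq-gij-2D-ellipsoid}, and $c_{1}$ is non-constant by hypothesis on the spherical coordinates, $\|X\|$ is a non-constant function, hence $\nabla_{X}X\neq 0$, hence $\nabla_{Z}Z\neq 0$, i.e. $Z$ is not a geodesic zonal flow in the sense of Definition \ref{Def-zonal-geodesic-arbitrary}. (This is precisely the content of Lemma \ref{Lemma-Killing-non-geodesic-on-2D-ellipsoid}, so one could also simply cite it.)

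Combining these two observations gives that $Z$ is a non-geodesic $S^{1}$-zonal flow, which is the claim. A minor point to handle is the degenerate case $Z=0$ (i.e. $f\equiv 0$): the notion of zonal flow in Definition \ref{Def-zonal-flow-arbitrary} does not exclude it a priori, but the relevant lemmas (e.g. Lemma \ref{Lemma-zonal-nabla-grad}, Corollary \ref{Cor-geodesic-equiv}) are phrased for nonzero $f$; I would either restrict attention to nonzero zonal flows from the outset or note that the statement is vacuous (or trivially handled) when $Z=0$.

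I do not anticipate any real obstacle here: the corollary is essentially a bookkeeping consequence of the classification lemma and the three preceding lemmas on $\partial_{\theta}$. The only thing requiring a word of care is making sure the representation $Z=f(r)\partial_{\theta}$ is compatible with the (scalar-ambiguous) representation $Z=fX$ used in the definitions — but this is guaranteed by the uniqueness statement in Lemma \ref{Lemma-zonal-rep-unique}, so the choice $X=\partial_{\theta}$ is legitimate and the sign/normalization of $f$ is irrelevant to both the $S^{1}$-property and the non-geodesic property.
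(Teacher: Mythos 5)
Your proposal is correct and follows essentially the same route as the paper, which simply cites Lemma \ref{Lemma-Killing-S1-on-2D-ellipsoid} and Lemma \ref{Lemma-Killing-non-geodesic-on-2D-ellipsoid}; your additional remarks on the zero flow and the uniqueness of the representation $Z=fX$ are sensible but not part of the paper's (terser) argument.
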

\begin{proof}
	This follows from
	Lemmas \ref{Lemma-Killing-S1-on-2D-ellipsoid}
	and
	\ref{Lemma-Killing-non-geodesic-on-2D-ellipsoid}.
\end{proof}

\begin{Remark}
\label{Remark-S2-zonal}
Definition \ref{Def-zonal-flow-arbitrary}
is essentially equivalent to that of \cite{TY1}
in the case $M=S^{2}$.
In precisely,
we have the following.
Let
$$
S^{2}:=\{(x,y,z)\in{\mathbb R}^{3}\mid x^{2}+y^{2}+z^{2}=1\}
$$
\begin{Lemma}
	Any zonal flow in the sense of Definition \ref{Def-zonal-flow-arbitrary} on $S^{2}$ can be transformed by an isometry to the form
	\begin{equation}
		Z=f(z)(y\partial_{x}-x\partial_{y}),
	\end{equation}
	where $f(z)$ is a one-variable function.
\end{Lemma}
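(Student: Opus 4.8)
The plan is to classify zonal flows on $S^2$ using the classification of its Killing vector fields, then apply an isometry to put the ambient Killing field into a normal form. First I would recall that the Lie algebra of Killing vector fields on the round sphere $S^2$ is $\mathfrak{so}(3)$, spanned by the three infinitesimal rotations $y\partial_x - x\partial_y$, $z\partial_y - y\partial_z$, $x\partial_z - z\partial_x$; moreover every nonzero element of this algebra is, up to a rotation (an isometry of $S^2$) and a scalar, equal to $K := y\partial_x - x\partial_y$, since the adjoint action of $SO(3)$ on $\mathfrak{so}(3)\setminus\{0\}$ has orbits exactly the spheres of fixed norm. So by Definition \ref{Def-zonal-flow-arbitrary}(1) and Lemma \ref{Lemma-zonal-rep-unique}, after composing with a suitable isometry we may write $Z = fK$ with $f$ a function on $S^2$ and $K = y\partial_x - x\partial_y$.

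The next step is to pin down $f$. By Lemma \ref{Lemma-zonal-div=0}, the divergence-free condition $\operatorname{div}(Z)=0$ from Definition \ref{Def-zonal-flow-arbitrary}(2) is equivalent to $K(f)=0$. Now $K = y\partial_x - x\partial_y$ generates the rotation about the $z$-axis, whose orbits are the circles of constant $z$ (latitude circles); a function on $S^2$ annihilated by $K$ is therefore invariant under this rotation, hence depends only on $z$. Thus $f = f(z)$ for a one-variable function $f$, giving exactly the claimed form $Z = f(z)(y\partial_x - x\partial_y)$.

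It remains to check Definition \ref{Def-zonal-flow-arbitrary}(3), namely $P(\nabla_Z Z) = 0$, i.e., that $\nabla_Z Z$ is a gradient; this is the analogue of the verification carried out in the proof of Lemma \ref{Lemma-classify-zonal-on-2D-ellipsoid} for $M^2_a$. By Lemma \ref{Lemma-zonal-nabla-grad}, $\nabla_Z Z = -\tfrac{f^2}{2}\operatorname{grad}(\|K\|^2)$. A direct computation gives $\|K\|^2 = x^2 + y^2 = 1 - z^2$ on $S^2$, which is a function of $z$ alone; hence $\operatorname{grad}(\|K\|^2)$ and therefore $\nabla_Z Z = -\tfrac{f(z)^2}{2}\operatorname{grad}(1-z^2)$ is $f^2$ times a gradient of a function of $z$. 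Since $f = f(z)$, the factor $f(z)^2$ can be absorbed: letting $p = p(z)$ be a primitive with $p'(z)\,|\nabla z|^2$-weighting chosen so that $\operatorname{grad}(p) = \tfrac{f(z)^2}{2}\operatorname{grad}(1-z^2)$ (possible because both sides are multiples of $\operatorname{grad}(z)$, and a function depending only on $z$ whose gradient is a prescribed $z$-dependent multiple of $\operatorname{grad}(z)$ always exists by integrating in $z$), we get $\nabla_Z Z = -\operatorname{grad}(p)$, so condition (3) holds. The main obstacle, such as it is, is organizing the normal-form reduction cleanly — showing that the generic orbit structure of the $SO(3)$-action on its own Lie algebra lets us arrange $X = K$ by an isometry — and then confirming the elementary but slightly fiddly point that $\|K\|^2$ really is a function of $z$ only so that the gradient condition reduces to a one-dimensional integration; once those are in place the lemma follows, and one can also invoke Remark \ref{Remark-S2-zonal} together with Lemma \ref{Lemma-classify-zonal-on-2D-ellipsoid} for the ellipsoid analogue since $S^2 = M^2_1$.
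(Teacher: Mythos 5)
Your proof is correct and follows essentially the same route as the paper: reduce the Killing field to $y\partial_{x}-x\partial_{y}$ by an isometry (the paper does this in Appendix \ref{Appendix-proof-Lemma-S2-Killing} via conjugacy of maximal abelian subalgebras in $\mathfrak{so}(3)$, you via the adjoint orbits of $SO(3)$ on $\mathfrak{so}(3)$ --- the same fact in concrete form), then use Lemma \ref{Lemma-zonal-div=0} to force $f=f(z)$ and verify the gradient condition by a one-variable integration exactly as in the proof of Lemma \ref{Lemma-classify-zonal-on-2D-ellipsoid}. No gaps.
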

The proof is the same for Lemma \ref{Lemma-classify-zonal-on-2D-ellipsoid}
using the following lemma instead of Lemma \ref{Lemma-Killing-on-2D-ellipsoid}.
\begin{Lemma}
\label{Lemma-S2-Killing-trans}
	Any Killing flow on $S^{2}$ can be transformed by an isometry
	to $\partial_{\theta}$.
\end{Lemma}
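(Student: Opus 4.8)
The plan is to identify the Lie algebra of Killing fields of the round $S^{2}$ with $\mathfrak{so}(3)$ and to exploit the transitivity of the conjugation action of the isometry group on it. First I would record that the isometry group of $S^{2}$ with the metric induced from $\mathbb{R}^{3}$ is $O(3)$, acting on $S^{2}$ by restriction of its linear action on $\mathbb{R}^{3}$; consequently every Killing field $X$ on $S^{2}$ is of the form $X_{p}=Ap$ for a unique skew-symmetric matrix $A\in\mathfrak{so}(3)$ (here $p\in S^{2}\subset\mathbb{R}^{3}$, and $Ap$ is indeed tangent to $S^{2}$ at $p$ since $\langle Ap,p\rangle=0$ by skew-symmetry; conversely each such $A$ yields a Killing field, and the dimension count $3=\dim\mathfrak{so}(3)$ shows there are no others). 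Under this identification the coordinate field $\partial_{\theta}$ corresponds to the standard generator $J$ of rotations about the $z$-axis (a one-line computation in spherical coordinates).

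Next, given a nonzero Killing field $X\leftrightarrow A$, I would put $A$ into normal form. A nonzero skew-symmetric $3\times 3$ matrix has rank $2$, so $\ker A=\mathbb{R}v$ for a unit vector $v$; choosing $Q\in O(3)$ with $Qv=e_{3}$, the conjugate $QAQ^{-1}$ is skew-symmetric with kernel $\mathbb{R}e_{3}$, hence equals $\lambda J$ for some $\lambda\neq 0$ (and one may further compose with the reflection $(x,y,z)\mapsto(x,-y,z)$, which sends $J$ to $-J$, to arrange $\lambda>0$). I would then check the elementary pushforward identity: for the isometry $\Phi(p):=Qp$ one has $(\Phi_{*}X)_{q}=QA(Q^{-1}q)=(QAQ^{-1})q=\lambda(Jq)$, so $\Phi_{*}X=\lambda\,\partial_{\theta}$. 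Since the lemma is only used, via the analogue of Lemma \ref{Lemma-classify-zonal-on-2D-ellipsoid}, up to the constant multiple permitted by Lemma \ref{Lemma-zonal-rep-unique}, absorbing $\lambda$ finishes the proof.

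The only real work is routine bookkeeping: verifying $Ap\in T_{p}S^{2}$, verifying that a linearly implemented vector field pushes forward under a linear isometry to the field implemented by the conjugated matrix, and identifying $J$ with $\partial_{\theta}$ in spherical coordinates — none of which presents an obstacle. An entirely equivalent alternative would be to solve the Killing equations \eqref{eq-Killing-Def-VW} directly in spherical coordinates, exactly as in the proof of Lemma \ref{Lemma-Killing-on-2D-ellipsoid}, obtaining a three-parameter family, and then normalize by an isometry; I prefer the $\mathfrak{so}(3)$ picture because it makes the normalization transparent and avoids the lengthy computation.
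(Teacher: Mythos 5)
Your proof is correct, and it reaches the same destination as the paper's by a more explicit route. The paper also identifies the Killing fields of $S^{2}$ with $\mathfrak{so}(3)$ via $S^{2}\simeq SO(3)/SO(2)$, but it then invokes the general conjugacy theorem for compact Lie groups ($K\cdot\mathfrak{t}=\mathfrak{k}$ for a maximal abelian subalgebra $\mathfrak{t}$, cited from Duistermaat--Kolk) together with the observation that maximal abelian subalgebras of $\mathfrak{so}(3)$ are one-dimensional. Your argument replaces that citation with the elementary normal form for a nonzero skew-symmetric $3\times 3$ matrix: rank $2$, one-dimensional kernel, rotate the kernel onto $e_{3}$ to get $QAQ^{-1}=\lambda J$. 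For $\mathfrak{so}(3)$ these are literally the same statement, so the trade-off is only that the paper's version generalizes immediately (it is reused for $S^{3}$ later, where the maximal torus is two-dimensional and your kernel argument would need modification), while yours is self-contained and makes the normalization visible. One point where you are actually more careful than the paper: both arguments only produce $\lambda\,\partial_{\theta}$ rather than $\partial_{\theta}$ itself, and you explicitly note that the constant can be absorbed thanks to Lemma \ref{Lemma-zonal-rep-unique}, whereas the paper's proof silently suppresses $\lambda$. The pushforward identity $(\Phi_{*}X)_{q}=(QAQ^{-1})q$ and the identification of $J$ with $\partial_{\theta}$ are routine as you say, so there is no gap.
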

\begin{proof}
We prove this lemma in Appendix \ref{Appendix-proof-Lemma-S2-Killing}
by using the theory of compact Lie groups.
\end{proof}
\end{Remark}

\section{Classification of zonal flows on 3D ellipsoids}
\label{Section-class-zonal-3D-ellip}
In this section,
we classify 
zonal flows on
three-dimensional ellipsoids.

\subsection{Setting}
In this section,
we calculate some formulae on three-dimensional
ellipsoid.

Let 
\begin{equation}
	M_{a}^{3}:=\{(x,y,z,w)\in{\mathbb R}^{4}\mid x^{2}+y^{2}=a^{2}(1-z^{2}-w^{2})\}.
\end{equation}
Take a coordinate
\begin{eqnarray}
\begin{array}{ccc}
(-\pi,\pi)\times (-\pi,\pi)\times (0,\frac{\pi}{2})
& \to &
S^{3}\\
\rotatebox{90}{$\in$}
&&
\rotatebox{90}{$\in$}\\
	(\xi,\mu,\chi)
	&\mapsto &
	(a\cos(\xi)\sin(\chi),a\sin(\xi)\sin(\chi),\cos(\mu)\cos(\chi),\sin(\mu)\cos(\chi)).
	\end{array}
	\label{Hopf}
\end{eqnarray}
we have
\begin{eqnarray}
	\partial_{\xi}
	&\mapsto &
	(-a\sin(\xi)\sin(\chi),a\cos(\xi)\sin(\chi),0,0),
	\nonumber\\
	\partial_{\mu}
	&\mapsto &
	(0,0,-\sin(\mu)\cos(\chi),\cos(\mu)\cos(\chi)),
	\nonumber\\
	\partial_{\chi}
	&\mapsto &
	(a\cos(\xi)\cos(\chi),a\sin(\xi)\cos(\chi),-\cos(\mu)\sin(\chi),-\sin(\mu)\sin(\chi)).
	\nonumber
\end{eqnarray}
Let $g$ be a Riemannian metric on $S^{3}$ induced by ${\mathbb R}^{4}$.
Then,
we have
\begin{eqnarray}
	g=(g_{ij})=(g(\partial_{i},\partial_{j}))=
	\begin{pmatrix}
		a^{2}\sin^{2}(\chi)&0&0\\
		0&\cos^{2}(\chi)&0\\
		0&0&a^{2}\cos^{2}(\chi)+\sin^{2}(\chi)
	\end{pmatrix},
	\label{eq-gij-3D-ellipsoid}
\end{eqnarray}
where we make a coorespondence
\begin{equation}
	1 \mapsto \xi,\qquad
	2\mapsto \mu,\qquad
	3\mapsto \chi.
\end{equation}
Then, 
the inverse matrix of $g$ is
\begin{equation}
	g^{-1}=(g^{ij})=
	\begin{pmatrix}
		1/a^{2}\sin^{2}(\chi)&0&0\\
		0&1/\cos^{2}(\chi)&0\\
		0&0&1/(a^{2}\cos^{2}(\chi)+\sin^{2}(\chi))
	\end{pmatrix}.
\end{equation}
and
\begin{eqnarray}
	\operatorname{grad}f&=&
	\frac{\partial_{1}f}{a^{2}\sin^{2}(\chi)}
	\partial_{1}
	+
	\frac{\partial_{2}f}{\cos^{2}(\chi)}
	\partial_{2}
		+
	\frac{\partial_{3}f}{a^{2}\cos^{2}(\chi)+\sin^{2}(\chi)}
	\partial_{3}.
	\label{eq-grad-3D-ellipsoid-Hopf}
\end{eqnarray}	
for any function $f$ on $M^{3}_{a}$.

Moreover, we have
\begin{eqnarray}
	\Gamma^{1}_{13}=\Gamma^{1}_{31}=\frac{1}{\tan(\chi)},&&
	\Gamma^{2}_{23}=\Gamma^{2}_{32}=-\tan(\chi),
	\\
	\Gamma^{3}_{11}=\frac{-a^{2}\sin(\chi)\cos(\chi)}{a^{2}\cos^{2}(\chi)+\sin^{2}(\chi)},&&
	\Gamma^{3}_{22}=\frac{\sin(\chi)\cos(\chi)}{a^{2}\cos^{2}(\chi)+\sin^{2}(\chi)},
	\\
	\Gamma^{3}_{33}=\frac{(1-a^{2})\sin(\chi)\cos(\chi)}{a^{2}\cos^{2}(\chi)+\sin^{2}(\chi)}.
\end{eqnarray}
Thus,
we have
\begin{eqnarray}
(\nabla_{\partial_{i}}\partial_{j})
&=&
	\begin{pmatrix}
		\nabla_{\partial_{1}}\partial_{1}&
		\nabla_{\partial_{1}}\partial_{2}&
		\nabla_{\partial_{1}}\partial_{3}\\
		\nabla_{\partial_{2}}\partial_{1}&
		\nabla_{\partial_{2}}\partial_{2}&
		\nabla_{\partial_{2}}\partial_{3}\\
		\nabla_{\partial_{3}}\partial_{1}&
		\nabla_{\partial_{3}}\partial_{2}&
		\nabla_{\partial_{3}}\partial_{3}
	\end{pmatrix}
	\label{nabla-ij}
	\\
	&=&
	\begin{pmatrix}
		\frac{-a^{2}\sin(\chi)\cos(\chi)}{a^{2}\cos^{2}(\chi)+\sin^{2}(\chi)}
		\partial_{3}&
		0&
		1/\tan(\chi)\partial_{1}\\
		0&
		\frac{\sin(\chi)\cos(\chi)}{a^{2}\cos^{2}(\chi)+\sin^{2}(\chi)}
		\partial_{3}&
		-\tan(\chi)\partial_{2}\\
		1/\tan(\chi)\partial_{1}&
		-\tan(\chi)\partial_{2}&
		\frac{(1-a^{2})\sin(\chi)\cos(\chi)}{a^{2}\cos^{2}(\chi)+\sin^{2}(\chi)}
		\partial_{3}
	\end{pmatrix}.
	\nonumber
\end{eqnarray}

\subsection{Classification of Killing vector fields on $M^{3}_{a}$}
In this section,
we classify Killing vector fields on $M^{3}_{a}$
with $a\neq 1$.

\begin{Lemma}
\label{Lemma-Classification-3D-ellipsoid}
Let 
$a\neq 1$.
Then,
the space of Killing vector fields on $M^{3}_{a}$
is spanned by
two vector fields
\begin{equation}
	\partial_{\xi},
	\quad
	\partial_{\mu}.
\end{equation}
\end{Lemma}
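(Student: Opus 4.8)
The plan is to exploit that the metric \eqref{eq-gij-3D-ellipsoid} depends only on the coordinate $\chi$: the Lie derivatives $\mathcal{L}_{\partial_\xi}g$ and $\mathcal{L}_{\partial_\mu}g$ vanish, having components $\partial_\xi g_{ij}=0$ and $\partial_\mu g_{ij}=0$, so $\partial_\xi$ and $\partial_\mu$ are Killing and span at least a two-dimensional subspace; the content of the statement is the reverse inclusion. Since $\partial_\xi,\partial_\mu$ generate a $T^2$-action by isometries and $\xi,\mu$ are genuine $2\pi$-periodic angular coordinates, I would expand an arbitrary Killing field $X$ in a Fourier series in $(\xi,\mu)$. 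Because the Killing equations $(\mathcal{L}_Xg)_{ij}=0$ form a linear system whose coefficients are independent of $\xi,\mu$, each Fourier component $X_{m,n}$ — whose coefficient functions have the form $p(\chi)e^{i(m\xi+n\mu)}$, $q(\chi)e^{i(m\xi+n\mu)}$, $r(\chi)e^{i(m\xi+n\mu)}$ — is again a (complexified) Killing field on the chart; thus it suffices to show $X_{0,0}\in\operatorname{span}\{\partial_\xi,\partial_\mu\}$ and $X_{m,n}=0$ for all $(m,n)\neq(0,0)$.

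Writing $X=P\partial_\xi+Q\partial_\mu+R\partial_\chi$ and using that $g$ is diagonal with $\chi$-dependent entries, the six equations $(\mathcal{L}_Xg)_{ij}=0$ become, after dividing by $e^{i(m\xi+n\mu)}$, an overdetermined linear ODE system for $(p,q,r)$ on $(0,\tfrac{\pi}{2})$; the $\xi\xi$, $\mu\mu$, $\xi\mu$, $\xi\chi$ and $\mu\chi$ components read $imp=-r\cot\chi$, $inq=r\tan\chi$, $mq\cos^2\chi+a^2np\sin^2\chi=0$, and two first-order equations expressing $p'$ and $q'$ through $r$ (the $\chi\chi$-component will not be needed, since it holds automatically once $p=q=r=0$). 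For $X_{0,0}$ the first two equations give $r\equiv0$ and then the $\xi\chi,\mu\chi$ equations give $p,q$ constant, i.e.\ $X_{0,0}=c_1\partial_\xi+c_2\partial_\mu$. For $(m,n)\neq(0,0)$ with $m=0$ or $n=0$, the same equations force $p=q=r=0$ in one line. The main case is $m,n\neq0$: eliminating $r$ between the $\xi\xi$ and $\mu\mu$ equations gives $q=-(m/n)p\tan^2\chi$, and substituting into the $\xi\mu$ equation yields $p\,(a^2n^2-m^2)\sin^2\chi=0$, so unless $a^2n^2=m^2$ we already get $p\equiv0$, hence $q=r=0$.

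The key remaining step, and the only point where $a\neq1$ is used, is the exceptional case $a^2n^2=m^2$. Here I would substitute $r=-imp\tan\chi$ and $q=-(m/n)p\tan^2\chi$ into the two first-order equations $(\mathcal{L}_Xg)_{\xi\chi}=0$ and $(\mathcal{L}_Xg)_{\mu\chi}=0$; each produces a formula for $p'\sin\chi\cos\chi$, namely $-n^2p(a^2\cos^2\chi+\sin^2\chi)$ from the first and $n^2p(a^2\cos^2\chi+\sin^2\chi)-2p$ from the second, and equating the two gives $p\bigl[(n^2a^2-1)+n^2(1-a^2)\sin^2\chi\bigr]=0$. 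For $a\neq1$ and $n\neq0$ the bracket is a non-constant affine function of $\sin^2\chi$, hence vanishes for at most one $\chi\in(0,\tfrac{\pi}{2})$, so by continuity $p\equiv0$, and then $q=r=0$; collecting all modes gives $X=c_1\partial_\xi+c_2\partial_\mu$. I expect the only real labor to be the trigonometric ODE bookkeeping — ``easy but lengthy'', as in the two-dimensional case — the single delicate point being this use of $a\neq1$ to rule out the exceptional mode. (When $a=1$ and $n^2=1$ the bracket vanishes identically, which is precisely how the extra Killing fields of the round $S^3$ escape; one may also note that $\chi=0$ and $\chi=\tfrac{\pi}{2}$ correspond to the circles $S$ and $N$, so smoothness there gives a less economical alternative for closing the exceptional case.)
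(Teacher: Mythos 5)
Your proposal is correct: the paper omits this computation entirely (it merely declares the proof ``similar'' to the two-dimensional case, which is itself left as an ``easy but lengthy computation''), and your solution of the Killing system $\mathcal{L}_{X}g=0$ in the chart \eqref{Hopf}, organized by Fourier modes in the periodic coordinates $(\xi,\mu)$, supplies exactly that computation in a clean way. The mode-by-mode elimination checks out, and the final identity $p\bigl[(n^{2}a^{2}-1)+n^{2}(1-a^{2})\sin^{2}\chi\bigr]=0$ correctly isolates the one place where $a\neq 1$ is used, while also explaining how the extra Killing fields of the round $S^{3}$ arise when $a=1$.
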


\begin{proof}
	Because the proof is similar to Lemma \ref{Lemma-Killing-on-2D-ellipsoid},
	we omit it.
\end{proof}

\begin{Lemma}
\label{Lemma-Killing-geodesic-3D-ellipsoid}
	Let $a\neq 1$
	and $X$ a Killing vector field.
	Then,
	there exists $p,q\in{\mathbb R}$
	such that
	\begin{equation}
	X=p\partial_{1}+q\partial_{2}.
	\label{eq-Lemma-Killing-X=pq}
	\end{equation}
	Moreover
	$\|X\|^{2}$ is constant if and only if 
	$$
	a^{2}p^{2}= q^{2}.
	$$
\end{Lemma}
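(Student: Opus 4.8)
The plan is to verify the two claims of Lemma \ref{Lemma-Killing-geodesic-3D-ellipsoid} directly from the explicit description of the Killing algebra. First, the representation \eqref{eq-Lemma-Killing-X=pq} is immediate from Lemma \ref{Lemma-Classification-3D-ellipsoid}: since $a \neq 1$, the space of Killing vector fields is spanned by $\partial_{1} = \partial_{\xi}$ and $\partial_{2} = \partial_{\mu}$, so any Killing $X$ is a constant linear combination $X = p\partial_{1} + q\partial_{2}$ for some $p, q \in {\mathbb R}$.

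Next I would compute $\|X\|^{2}$ using the metric \eqref{eq-gij-3D-ellipsoid}. Since $g$ is diagonal in the $(\xi, \mu, \chi)$ coordinates with $g_{11} = a^{2}\sin^{2}(\chi)$ and $g_{22} = \cos^{2}(\chi)$, and the cross term $g_{12}$ vanishes, we get
\begin{equation}
\|X\|^{2} = g(p\partial_{1} + q\partial_{2}, p\partial_{1} + q\partial_{2}) = p^{2}a^{2}\sin^{2}(\chi) + q^{2}\cos^{2}(\chi).
\end{equation}
Writing $\sin^{2}(\chi) = 1 - \cos^{2}(\chi)$, this becomes $p^{2}a^{2} + (q^{2} - a^{2}p^{2})\cos^{2}(\chi)$, which depends only on $\chi$. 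Since $\chi$ ranges over $(0, \frac{\pi}{2})$, the function $\cos^{2}(\chi)$ is non-constant there; hence $\|X\|^{2}$ is constant on $M^{3}_{a}$ if and only if the coefficient $q^{2} - a^{2}p^{2}$ vanishes, i.e. $a^{2}p^{2} = q^{2}$, which is exactly the asserted condition.

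The argument is entirely routine, so there is no real obstacle; the only point requiring a little care is the claim that $\cos^{2}(\chi)$ is genuinely non-constant on the coordinate chart — this is clear since the chart covers an open dense subset of $M^{3}_{a}$ on which $\chi$ takes a full interval of values, and $\|X\|^{2}$, being a smooth function on the connected manifold $M^{3}_{a}$, is constant globally if and only if it is constant on this dense open set. One should also note for completeness that the coordinate vector fields $\partial_{\xi}$ and $\partial_{\mu}$ extend to genuine (smooth) Killing fields on all of $M^{3}_{a}$, generated respectively by the rotations in the $(x,y)$-plane and the $(z,w)$-plane; the coordinate expression above then determines $\|X\|^{2}$ on the open chart, and smoothness propagates the identity everywhere.
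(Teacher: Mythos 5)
Your proposal is correct and follows the paper's own argument essentially verbatim: both derive \eqref{eq-Lemma-Killing-X=pq} from Lemma \ref{Lemma-Classification-3D-ellipsoid} and then compute $\|X\|^{2}=a^{2}p^{2}\sin^{2}(\chi)+q^{2}\cos^{2}(\chi)=a^{2}p^{2}-(a^{2}p^{2}-q^{2})\cos^{2}(\chi)$ from \eqref{eq-gij-3D-ellipsoid}, reading off constancy from the vanishing of the $\cos^{2}(\chi)$ coefficient. The extra remarks about density of the chart and smooth extension of $\partial_{\xi},\partial_{\mu}$ are fine but add nothing beyond what the paper leaves implicit.
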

\begin{proof}
\eqref{eq-Lemma-Killing-X=pq} 
is immediate corollary of Lemma \ref{Lemma-Classification-3D-ellipsoid}.
By \eqref{eq-gij-3D-ellipsoid},
	we have
	\begin{eqnarray*}
		\|X\|^{2}
		&=&
		a^{2}p^{2}\sin^{2}(\chi)
		+
		q^{2}\cos^{2}(\chi)
		\\
		&=&
		a^{2}p^{2}
		-
		(a^{2}p^{2}-q^{2})
		\cos^{2}(\chi).
	\end{eqnarray*}
	This completes the proof.	
\end{proof}

\subsection{Classification of zonal flows on $M^{3}_{a}$}
In this section,
we classify zonal flows on $M^{3}_{a}$.
First,
we classify geodesic zonal flows on $M^{3}_{a}$
(see Definition \ref{Def-zonal-geodesic-arbitrary}).
\begin{Proposition}
\label{Prop-zonal-geodesic-on-3D-ellipsoid}
	Let $a\neq 1$
	and $Z$ be a geodesic zonal flow.
	Then,
	there exist
	$p,q\in{\mathbb R}$
	and a function $f$ on $M^{3}_{a}$
	satisfying
	$a^{2}p^{2}=q^{2}$
	and
	$(p\partial_{1}+q\partial_{2})(f)=0$
	such that
	\begin{equation}
		Z=f(p\partial_{1}+q\partial_{2}).
	\end{equation}
\end{Proposition}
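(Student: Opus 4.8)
The plan is simply to assemble the classification of Killing fields on $M^{3}_{a}$ (Lemma \ref{Lemma-Classification-3D-ellipsoid}) with the general characterization of geodesic zonal flows established earlier. Let $Z=fX$ be a geodesic zonal flow on $M^{3}_{a}$, where by Definition \ref{Def-zonal-flow-arbitrary} we may take $f$ a function and $X$ a Killing vector field. If $Z\equiv 0$ there is nothing to prove: take $p=q=0$ and $f\equiv 0$, which trivially satisfy $a^{2}p^{2}=q^{2}$, $(p\partial_{1}+q\partial_{2})(f)=0$, and $Z=f(p\partial_{1}+q\partial_{2})$. So assume $Z\not\equiv 0$; then $f$ is not identically zero and $X\neq 0$.

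First I would invoke Lemma \ref{Lemma-Classification-3D-ellipsoid}, which uses $a\neq 1$: the space of Killing vector fields on $M^{3}_{a}$ is spanned by $\partial_{1}=\partial_{\xi}$ and $\partial_{2}=\partial_{\mu}$, so there exist $p,q\in{\mathbb R}$ with $X=p\partial_{1}+q\partial_{2}$. This already gives the asserted form $Z=f(p\partial_{1}+q\partial_{2})$. Next, since $Z$ is a geodesic zonal flow we have $\nabla_{Z}Z=0$ by Definition \ref{Def-zonal-geodesic-arbitrary}; because $f\not\equiv 0$, Corollary \ref{Cor-geodesic-equiv} gives the equivalent condition that $\|X\|$ is constant. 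Plugging $X=p\partial_{1}+q\partial_{2}$ into Lemma \ref{Lemma-Killing-geodesic-3D-ellipsoid} then yields exactly $a^{2}p^{2}=q^{2}$. Finally, a zonal flow is divergence-free by definition, so Lemma \ref{Lemma-zonal-div=0} forces $X(f)=0$, that is, $(p\partial_{1}+q\partial_{2})(f)=0$. Combining these three facts gives the proposition.

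There is no genuine obstacle here: the statement is essentially a bookkeeping corollary of Lemma \ref{Lemma-Classification-3D-ellipsoid}, Corollary \ref{Cor-geodesic-equiv}, and Lemma \ref{Lemma-Killing-geodesic-3D-ellipsoid}. The only points needing a little care are the degenerate case $Z\equiv 0$ (where the representation $Z=fX$ does not pin down $X$), handled by hand above, and the fact that $Z=fX$ is unique only up to a constant multiple by Lemma \ref{Lemma-zonal-rep-unique} — but since the proposition only asserts the existence of $p,q,f$, any choice of representation suffices.
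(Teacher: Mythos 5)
Your proposal is correct and follows essentially the same route as the paper: classify $X$ via Lemma \ref{Lemma-Classification-3D-ellipsoid}, derive $a^{2}p^{2}=q^{2}$ from the geodesic condition via Corollary \ref{Cor-geodesic-equiv} and Lemma \ref{Lemma-Killing-geodesic-3D-ellipsoid}, and get $X(f)=0$ from divergence-freeness. Your explicit handling of the degenerate case $Z\equiv 0$ (where Corollary \ref{Cor-geodesic-equiv} needs $f$ nonzero) is a small point of extra care that the paper's own proof glosses over.
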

\begin{proof}
	Because 
	$Z$ is a zonal flow,
	there exists
	a function $f=f(\xi,\mu,\chi)$ 
	and Killing vector field $X$ on $M^{3}_{a}$
	satisfying
	\begin{equation}
	Z=fX.
	\end{equation}
	Lemma \ref{Lemma-Killing-geodesic-3D-ellipsoid} implies
	there exist
	$p,q\in{\mathbb R}$
	satisfing $a^{2}p^{2}=q^{2}$
	such that
	\begin{equation}
		X=p\partial_{1}+q\partial_{2}.
	\end{equation}
	This completes the proof
	by Lemma \ref{Lemma-geodesic-zonal-fZ}.
\end{proof}

Second,
we classify non-geodesic zonal flows on $M^{3}_{a}$.

\begin{Proposition}
\label{Prop-zonal-non-geodesic-on-3D-ellipsoid}
	Let $a\neq 1$
	and 
	$Z$ be a non-geodesic zonal flow on $M^{3}_{a}$.
	Then,
	there exist $p,q\in{\mathbb R}$
	satisfying $a^{2}p^{2}\neq q^{2}$
	and a one-variable function $f=f(\chi)$
	such that
	\begin{equation}
		Z=f(p\partial_{1}+q\partial_{2}).
	\end{equation}
\end{Proposition}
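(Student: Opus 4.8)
The plan is to start from the fact that any non-geodesic zonal flow $Z$ can be written as $Z = fX$ with $X$ a Killing field; by Lemma \ref{Lemma-Killing-geodesic-3D-ellipsoid} we immediately get $X = p\partial_1 + q\partial_2$ for some $p,q \in \mathbb{R}$, and since $Z$ is non-geodesic, Corollary \ref{Cor-geodesic-equiv} forces $\|X\|^2$ to be non-constant, which by the same lemma means $a^2 p^2 \neq q^2$. So the only real content is to show that the function $f$ depends on $\chi$ alone. First I would compute $\|X\|^2 = a^2 p^2 \sin^2(\chi) + q^2 \cos^2(\chi)$, whose gradient (via \eqref{eq-grad-3D-ellipsoid-Hopf}) is a nonzero multiple of $\partial_3 = \partial_\chi$ everywhere on the dense open set $U_0$ where $\|X\|^2$ is non-critical.

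Next I would apply the two structural constraints that a zonal flow must satisfy. The divergence-free condition together with Lemma \ref{Lemma-zonal-div=0} gives $X(f) = 0$, i.e. $(p\partial_\xi + q\partial_\mu)f = 0$. Then Lemma \ref{Lemma-zonal-grad-collinear} tells us that on $U_0$ the gradient $\operatorname{grad}(f^2)$ is collinear with $\operatorname{grad}(\|X\|^2)$, which is proportional to $\partial_\chi$. Writing out $\operatorname{grad}(f^2)$ using the metric \eqref{eq-gij-3D-ellipsoid}, collinearity with $\partial_3$ forces $\partial_1(f^2) = \partial_2(f^2) = 0$ on $U_0$, hence $\partial_\xi f = \partial_\mu f = 0$ there. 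Since $U_0$ is dense and open and $f$ is smooth (and $\|X\|^2$ is real-analytic with a proper-submanifold critical set), these partial derivatives vanish on all of $M^3_a$, so $f = f(\chi)$ is a one-variable function.

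The main obstacle is the bookkeeping around the critical set of $\|X\|^2$: the collinearity statement from Lemma \ref{Lemma-zonal-grad-collinear} only holds on $U_0 = \{ \operatorname{grad}(\|X\|^2) \neq 0\}$, so I must argue that $\partial_\xi f$ and $\partial_\mu f$, which vanish on the dense open $U_0$, vanish identically by continuity. One should also handle the edge case $f \equiv 0$ separately (then the conclusion is vacuous), so that Lemma \ref{Lemma-zonal-rep-unique} and Lemma \ref{Lemma-zonal-grad-collinear} apply to a nonzero $f$. A cleaner alternative avoiding $U_0$ altogether is to go back to the defining condition $P(\nabla_Z Z) = 0$: by Lemma \ref{Lemma-zonal-nabla-grad}, $\nabla_Z Z = -\tfrac{f^2}{2}\operatorname{grad}(\|X\|^2)$, so $\tfrac{f^2}{2}\operatorname{grad}(\|X\|^2)$ must be a gradient; since $\operatorname{grad}(\|X\|^2) = \frac{\partial_\chi(\|X\|^2)}{a^2\cos^2\chi + \sin^2\chi}\,\partial_\chi$, the curl-free condition on $\tfrac{f^2}{2}\operatorname{grad}(\|X\|^2)$ directly yields $\partial_\xi(f^2) = \partial_\mu(f^2) = 0$ wherever $\partial_\chi(\|X\|^2) \neq 0$, i.e. on a dense open set, and again continuity finishes the argument. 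Either route is routine once the metric computations of the previous subsection are in hand.
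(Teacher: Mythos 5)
Your proposal is correct and follows essentially the same route as the paper: write $Z=fX$, invoke Lemma \ref{Lemma-Killing-geodesic-3D-ellipsoid} to get $X=p\partial_{1}+q\partial_{2}$ with $a^{2}p^{2}\neq q^{2}$ from the non-geodesic hypothesis, compute that $\operatorname{grad}(\|X\|^{2})$ is a nonzero multiple of $\partial_{3}$, and apply Lemma \ref{Lemma-zonal-grad-collinear} to force $\partial_{1}f=\partial_{2}f=0$. Your extra bookkeeping (the density of $U_{0}$, the passage from $f^{2}$ to $f$, and the trivial case $f\equiv 0$) only tightens steps the paper leaves implicit, so there is no substantive difference.
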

\begin{proof}
	Because 
	$Z$ is a zonal flow,
	there exists
	a function $f=f(\xi,\mu,\chi)$ 
	and Killing vector field $X$ on $M^{3}_{a}$
	satisfying
	\begin{equation}
	Z=fX.
	\end{equation}
	Lemma \ref{Lemma-Killing-geodesic-3D-ellipsoid} implies
	there exist
	$p,q\in{\mathbb R}$
	with $a^{2}p^{2}\neq q^{2}$
	such that
	\begin{equation}
		X=p\partial_{1}+q\partial_{2}.
	\end{equation}
	By \eqref{eq-gij-3D-ellipsoid} and \eqref{eq-grad-3D-ellipsoid-Hopf},
	we have
	\begin{eqnarray}
		\operatorname{grad}(\|X\|^{2})
		&=&
		\frac{2(a^{2}p^{2}-q^{2})\sin(\chi)\cos(\chi)}{a^{2}\cos^{2}(\chi)+\sin^{2}(\chi)}
		\partial_{3}
		\neq 0.
		\label{eq-grad(X^2)-2aapp}
	\end{eqnarray}
	Thus,
	$f$ satisfies
	\begin{equation}
		\partial_{1}f
		=
		\partial_{2}f
		=
		0
	\end{equation}
	by Lemma \ref{Lemma-zonal-grad-collinear}, \eqref{eq-grad-3D-ellipsoid-Hopf} and \eqref{eq-grad(X^2)-2aapp}.
\end{proof}

At the end of this section,
we classify $S^{1}$-zonal flows on $M^{3}_{a}$.
We set $\frac{q}{p}=\infty$
where $p,q\in{\mathbb R}$
if $p=0$.
\begin{Proposition}
\label{Proposition-S1-zonal-on-3D-ellipsoid}
	Let
	$Z=f(p\partial_{1}+q\partial_{2})$
	be a zonal flow on $M^{3}_{a}$
	with $p,q\in{\mathbb R}$
	and a function $f$.
	Then,
	$Z$ is a $S^{1}$-zonal flow 
	if and only if
	$\frac{q}{p}\in{\mathbb Q}\cup\{\infty\}$.
\end{Proposition}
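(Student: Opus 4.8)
The plan is to compute the flow of $X:=p\partial_{1}+q\partial_{2}$ explicitly, recognize it as the restriction of the standard torus action on $M^{3}_{a}$ along a one-parameter subgroup, and then translate the condition ``$X$ is induced by an $S^{1}$-action'' into the periodicity of that flow, which is a purely arithmetic condition on $p$ and $q$. First I would recall that $M^{3}_{a}$ carries the natural $T^{2}=S^{1}\times S^{1}$ action given by simultaneously rotating the $(x,y)$-coordinates and the $(z,w)$-coordinates of ${\mathbb R}^{4}$; in the coordinates of \eqref{Hopf} this action is $(\xi,\mu,\chi)\mapsto(\xi+s,\mu+t,\chi)$, with infinitesimal generators $\partial_{1}=\partial_{\xi}$ and $\partial_{2}=\partial_{\mu}$. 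This action is faithful, because already on the open dense chart $\{0<\chi<\frac{\pi}{2}\}$ a group element fixing a point with $(x,y)\neq 0$ and $(z,w)\neq 0$ must be trivial, i.e.\ $(e^{is},e^{it})=(1,1)$. Hence the flow of $X$ is $\phi^{X}_{t}(\xi,\mu,\chi)=(\xi+pt,\mu+qt,\chi)$, it coincides with the $T^{2}$-action along the one-parameter subgroup $t\mapsto(e^{ipt},e^{iqt})$, and $\phi^{X}_{t}=\operatorname{id}$ holds if and only if $pt\in 2\pi{\mathbb Z}$ and $qt\in 2\pi{\mathbb Z}$.

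Next I would make precise the reduction of the $S^{1}$-zonal condition to periodicity of $\phi^{X}_{t}$. A vector field is induced by an $S^{1}$-action exactly when its flow is periodic: an $S^{1}$-action $\Phi$ yields the $2\pi$-periodic flow $s\mapsto\Phi(e^{is},\cdot)$ with the prescribed generator, and conversely a flow of some period $T>0$ descends to a smooth action of ${\mathbb R}/T{\mathbb Z}\cong S^{1}$. This property is preserved under multiplying the vector field by a nonzero constant, and (assuming $Z\neq 0$, the degenerate case being immediate) the representation $Z=fX$ is unique up to a constant by Lemma \ref{Lemma-zonal-rep-unique}; hence, in view of Definition \ref{Def-zonal-S1}, ``$Z$ is an $S^{1}$-zonal flow'' is equivalent to ``the flow $\phi^{X}_{t}$ of $X=p\partial_{1}+q\partial_{2}$ is periodic''.

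Finally I would carry out the arithmetic step. By the first paragraph $\phi^{X}_{t}$ is periodic precisely when there exists $t>0$ with $pt\in 2\pi{\mathbb Z}$ and $qt\in 2\pi{\mathbb Z}$. If $p=0$ this is automatic, and simultaneously $\frac{q}{p}=\infty\in{\mathbb Q}\cup\{\infty\}$ by the stated convention. If $p\neq 0$, the existence of such a $t$ forces $\frac{q}{p}=\frac{qt}{pt}\in{\mathbb Q}$; conversely, if $\frac{q}{p}=\frac{n}{m}$ with $m,n\in{\mathbb Z}$, $m\neq0$, one may take $t=\pm\frac{2\pi m}{p}$. When $\frac{q}{p}$ is irrational no such $t$ exists, so $\phi^{X}_{t}$ is not periodic (equivalently, by Kronecker's theorem the line $\{(pt,qt)\}$ is dense in $T^{2}$ and is in particular not a closed circle). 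Combining with the previous paragraph yields the claimed equivalence.

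I do not expect a genuine difficulty in this argument; the only points needing care are verifying that the ambient $T^{2}$-action on $M^{3}_{a}$ is faithful so that the flow of $X$ is \emph{exactly} the subgroup action $t\mapsto(e^{ipt},e^{iqt})$, correctly handling the degenerate cases $p=0$ and $Z=0$ together with the convention $\frac{q}{p}=\infty$, and stating cleanly the elementary equivalence between periodicity of a flow and being induced by an $S^{1}$-action; the remaining content is Kronecker's theorem and Lemma \ref{Lemma-zonal-rep-unique}.
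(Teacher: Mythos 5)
Your proof is correct and follows essentially the same route as the paper: the paper restricts to a slice $\chi=\chi_{0}$, identifies it isometrically with the flat torus carrying $\partial_{x},\partial_{y}$, and invokes the classical fact that the line of slope $q/p$ closes up iff $q/p\in{\mathbb Q}\cup\{\infty\}$, which is exactly the periodicity-of-the-flow argument you spell out globally. Your version is somewhat more careful than the paper's (which ends with ``the proposition is obvious'') in that you explicitly handle the well-definedness of the $S^{1}$-zonal condition via Lemma \ref{Lemma-zonal-rep-unique} and the degenerate cases $p=0$, $Z=0$, but the underlying idea is identical.
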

\begin{proof}
	Let $\chi_{0}\in(0,\frac{\pi}{2})$.
	Then,
	the image by the coordinate 
	\eqref{Hopf}
	of
	\begin{eqnarray*}
		\left\{(\xi,\mu,\chi)
		\in (-\pi,\pi)\times (-\pi,\pi)\times (0,\frac{\pi}{2})
		\middle|
		\chi=\chi_{0}
		\right\}
	\end{eqnarray*}
	is 
	isometric to
	the two-dimensional 
	flat torus
	\begin{eqnarray*}
		\{(x,y)\in {\mathbb R}/2\pi{\mathbb Z}\times {\mathbb R}/2\pi{\mathbb Z}\mid (x,y)\in {\mathbb R}^{2}\}.
	\end{eqnarray*}
	Moreover,
	this isometry transforms 
	$\partial_{1}$ and $\partial_{2}$
	to
	$\partial_{x}$ and $\partial_{y}$,
	respectively.
	Then,
	the proposition is obvious.
\end{proof}

\begin{Corollary}
\label{Cor-non-geodesic-S1-zonal-on-3D-ellipsoid}
	Let $Z$ be a zonal flow on $M^{3}_{a}$.
	Then,
	$Z$ is a non-geodesic $S^{1}$-zonal flow
	if and only if 
	there exist 
	$p,q\in{\mathbb R}$
	satisfying
	$a^{2}p^{2}\neq q^{2}$
	and
	$\frac{q}{p}\in{\mathbb Q}\cup\{\infty\}$
	and 
	a one-variable function $f(\chi)$
	such that
	\begin{eqnarray*}
		Z=f
		(p\partial_{1}+q\partial_{2}).
	\end{eqnarray*}
\end{Corollary}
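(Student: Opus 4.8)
The plan is to read off this corollary by assembling the classification results already established in this section, so the argument is essentially bookkeeping rather than new computation. The three ingredients are: Proposition~\ref{Prop-zonal-non-geodesic-on-3D-ellipsoid}, which gives the precise shape $Z=f(\chi)(p\partial_{1}+q\partial_{2})$ with $a^{2}p^{2}\neq q^{2}$ of a non-geodesic zonal flow; Proposition~\ref{Proposition-S1-zonal-on-3D-ellipsoid}, which says that such a flow is $S^{1}$-zonal exactly when $q/p\in\mathbb{Q}\cup\{\infty\}$; and the pair Lemma~\ref{Lemma-Killing-geodesic-3D-ellipsoid} and Corollary~\ref{Cor-geodesic-equiv}, which converts the algebraic condition $a^{2}p^{2}\neq q^{2}$ into the statement $\nabla_{Z}Z\neq 0$. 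Throughout I keep the standing hypothesis $a\neq 1$ of this subsection, under which the space of Killing fields on $M^{3}_{a}$ is spanned by $\partial_{1},\partial_{2}$ (Lemma~\ref{Lemma-Classification-3D-ellipsoid}).

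For the ``only if'' direction, suppose $Z$ is a non-geodesic $S^{1}$-zonal flow. Since it is non-geodesic, Proposition~\ref{Prop-zonal-non-geodesic-on-3D-ellipsoid} supplies $p,q\in\mathbb{R}$ with $a^{2}p^{2}\neq q^{2}$ and a one-variable function $f=f(\chi)$ such that $Z=f(p\partial_{1}+q\partial_{2})$. Feeding this representation into Proposition~\ref{Proposition-S1-zonal-on-3D-ellipsoid} and using that $Z$ is $S^{1}$-zonal yields $q/p\in\mathbb{Q}\cup\{\infty\}$, which is exactly the asserted conclusion.

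For the ``if'' direction, suppose $Z$ is a zonal flow with $Z=f(p\partial_{1}+q\partial_{2})$ for some $p,q\in\mathbb{R}$ satisfying $a^{2}p^{2}\neq q^{2}$ and $q/p\in\mathbb{Q}\cup\{\infty\}$ and a (not identically vanishing) one-variable function $f=f(\chi)$. That $Z$ is $S^{1}$-zonal is immediate from Proposition~\ref{Proposition-S1-zonal-on-3D-ellipsoid}. For non-geodesicity, note that $X:=p\partial_{1}+q\partial_{2}$ is Killing by Lemma~\ref{Lemma-Classification-3D-ellipsoid}, and $a^{2}p^{2}\neq q^{2}$ forces $\|X\|^{2}$ to be non-constant by Lemma~\ref{Lemma-Killing-geodesic-3D-ellipsoid}; since $\operatorname{div}(Z)=0$ because $Z$ is a zonal flow, Corollary~\ref{Cor-geodesic-equiv} then gives $\nabla_{Z}Z\neq 0$, so $Z$ is non-geodesic in the sense of Definition~\ref{Def-zonal-geodesic-arbitrary}.

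The one point that deserves a moment's care — the closest thing here to an obstacle — is checking that the conditions ``$a^{2}p^{2}\neq q^{2}$'' and ``$q/p\in\mathbb{Q}\cup\{\infty\}$'' are genuinely properties of $Z$ and not artifacts of the chosen representation $Z=fX$. This is guaranteed by Lemma~\ref{Lemma-zonal-rep-unique}: the representation is unique up to a nonzero constant multiple, and both the ratio $q/p$ and the truth value of $a^{2}p^{2}=q^{2}$ are invariant under $(f,X)\mapsto(f/c,\,cX)$. Hence the statement is unambiguous and the two implications above are mutually consistent.
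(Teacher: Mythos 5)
Your proposal is correct and follows essentially the same route as the paper, which simply cites Propositions \ref{Prop-zonal-non-geodesic-on-3D-ellipsoid} and \ref{Proposition-S1-zonal-on-3D-ellipsoid} and declares the corollary obvious; your extra care with the ``if'' direction (via Lemma \ref{Lemma-Killing-geodesic-3D-ellipsoid} and Corollary \ref{Cor-geodesic-equiv}) and with representation-independence (via Lemma \ref{Lemma-zonal-rep-unique}) just fills in details the paper leaves implicit.
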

\begin{proof}
	This is obvious by
	Propositions
	\ref{Prop-zonal-non-geodesic-on-3D-ellipsoid}
	and
	\ref{Proposition-S1-zonal-on-3D-ellipsoid}.
\end{proof}

\begin{Remark}
	We can also classify zonal flows in the case $a=1$,
	namely, in the case $M^{3}_{a}=S^{3}$.
	In precisely,
	we have the following.
	\begin{Proposition}
	\label{Prop-zonal-geodesic-on-3D-sphere}
		Let $a= 1$
	and $Z$ be a geodesic zonal flow.
	Then,
	there exist
	$p,q\in{\mathbb R}$
	and a function $f=f(\xi,\mu,\chi)$ on $S^{3}$
	satisfying
	$a^{2}p^{2}=q^{2}$
	and
	$(p\partial_{1}+q\partial_{2})(f)=0$
	such that
	$Z$ is transformed by an isometry to
	\begin{equation}
		f(p\partial_{1}+q\partial_{2}).
	\end{equation}
	\end{Proposition}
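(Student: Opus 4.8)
The plan is to follow the proof of Proposition \ref{Prop-zonal-geodesic-on-3D-ellipsoid} with one change: the classification of Killing vector fields (Lemma \ref{Lemma-Killing-geodesic-3D-ellipsoid}, which used $a\neq 1$) is replaced by a classification, up to isometry, of the \emph{constant-norm} Killing vector fields on the round sphere $S^{3}=M^{3}_{1}$. First I would write $Z=fX$ with $f$ a function and $X$ a Killing vector field, as in Definition \ref{Def-zonal-flow-arbitrary}. Since $Z$ is a geodesic zonal flow, Corollary \ref{Cor-geodesic-equiv} gives $\nabla_{X}X=0$, equivalently that $\|X\|$ is a constant function; and Lemma \ref{Lemma-zonal-div=0} turns $\operatorname{div}(Z)=0$ into $X(f)=0$. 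An isometry of $S^{3}$ sends Killing fields to Killing fields and preserves both the divergence-free and the geodesic conditions, so it suffices to show that any constant-norm Killing vector field on $S^{3}$ can be carried by an isometry to $p\partial_{1}+q\partial_{2}$ with $p,q\in{\mathbb R}$ satisfying $a^{2}p^{2}=q^{2}$ (here $a=1$, so $p^{2}=q^{2}$); the invocation of Lemma \ref{Lemma-geodesic-zonal-fZ} then closes the argument exactly as in the ellipsoid case.

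For the classification step I would use that the isometry group of the round $S^{3}$ is $O(4)$, so every Killing vector field has the form $X_{x}=Ax$ for $x\in S^{3}\subset{\mathbb R}^{4}$ and a skew-symmetric matrix $A$, with $\|X_{x}\|^{2}=-\langle A^{2}x,x\rangle$. This is constant on $S^{3}$ if and only if $A^{2}=-c\,I$ for some $c\geq 0$. If $c=0$ then $X=0$ and one takes $p=q=0$; if $c>0$ then $J:=c^{-1/2}A$ is an orthogonal complex structure on ${\mathbb R}^{4}$, and since $O(4)$ acts transitively by conjugation on the set of orthogonal complex structures of ${\mathbb R}^{4}$, after applying a suitable isometry we may assume $J$ is one of the two standard ones, $(x_{1},x_{2},x_{3},x_{4})\mapsto(-x_{2},x_{1},\mp x_{4},\pm x_{3})$. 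A direct computation in the coordinates \eqref{Hopf} with $a=1$, where $g=\operatorname{diag}(\sin^{2}\chi,\cos^{2}\chi,1)$, identifies the vector fields generated by these two complex structures with the Hopf fields $\partial_{\xi}\pm\partial_{\mu}=\partial_{1}\pm\partial_{2}$. Hence, up to the scalar $c^{1/2}$, we have $X=p\partial_{1}+q\partial_{2}$ with $q=\pm p$, i.e.\ $a^{2}p^{2}=q^{2}$.

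Combining, after an isometry $Z=f(p\partial_{1}+q\partial_{2})$ with $(p\partial_{1}+q\partial_{2})(f)=X(f)=0$ and $a^{2}p^{2}=q^{2}$; conversely Lemma \ref{Lemma-geodesic-zonal-fZ} shows every such $f(p\partial_{1}+q\partial_{2})$ is a geodesic zonal flow, so the description is sharp. The main obstacle is the classification step: one must recall that the Killing fields of $S^{3}$ are generated by $\mathfrak{so}(4)$, identify the constant-norm ones with (rescaled) orthogonal complex structures, use $O(4)$-transitivity on such structures, and carry out the explicit coordinate computation matching the two standard complex structures with $\partial_{1}\pm\partial_{2}$. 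An alternative route avoiding ${\mathbb R}^{4}$ is to use $S^{3}=SU(2)$: a computation with $\langle X^{L},X^{R}\rangle=\langle X^{L}_{e},\operatorname{Ad}(g^{-1})X^{R}_{e}\rangle$ shows the constant-norm Killing fields are exactly the left-invariant and the right-invariant ones, and inner automorphisms (which are isometries) act transitively, up to scaling, on each of these two families; this reproves the same statement, the two families corresponding to $q=\pm p$.
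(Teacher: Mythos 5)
Your proposal is correct, but the key classification step is done by a genuinely different route than the paper's. The paper handles the $a=1$ case by first classifying \emph{all} Killing fields on $S^{3}$ up to isometry: it invokes the maximal-torus theorem for the compact group $SO(4)$ (the same mechanism as in Lemma \ref{Lemma-S2-Killing-trans}, via \cite[Prop.~(3.8.1)]{Duistermaat}) to move an arbitrary Killing field into the two-dimensional maximal abelian subalgebra spanned by $\partial_{\xi},\partial_{\mu}$, obtaining $X=p\partial_{1}+q\partial_{2}$ with $p,q$ arbitrary; the constraint $a^{2}p^{2}=q^{2}$ is then read off from the explicit formula $\|X\|^{2}=a^{2}p^{2}-(a^{2}p^{2}-q^{2})\cos^{2}(\chi)$ of Lemma \ref{Lemma-Killing-geodesic-3D-ellipsoid}, exactly as in Proposition \ref{Prop-zonal-geodesic-on-3D-ellipsoid}. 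You instead impose the constant-norm condition \emph{first} and classify only those Killing fields, identifying $A\in\mathfrak{so}(4)$ with $-A^{2}=cI$ as rescaled orthogonal complex structures and using transitivity of the $O(4)$-conjugation action on these (equivalently, your $SU(2)$ left/right-invariant description). Both arguments are sound and your coordinate identification of the two standard complex structures with $\partial_{1}\pm\partial_{2}$ checks out: at $x=(x_{1},x_{2},x_{3},x_{4})$ one has $\partial_{\xi}\pm\partial_{\mu}=(-x_{2},x_{1},\mp x_{4},\pm x_{3})$. The paper's route is more economical in context because the same maximal-torus lemma also feeds the non-geodesic classification (Proposition \ref{Prop-zonal-non-geodesic-on-3D-sphere}), whereas your argument is self-contained for the geodesic case and makes the geometric meaning of the condition $p^{2}=q^{2}$ (Hopf fields, i.e.\ left- or right-invariant fields on $SU(2)$) transparent; one minor remark is that since $O(4)$ acts transitively on orthogonal complex structures you could normalize to the single case $q=p$, so keeping both signs is harmless but not necessary.
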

	
	\begin{Proposition}
	\label{Prop-zonal-non-geodesic-on-3D-sphere}
	Let $a=1$
	and 
	$Z$ be a non-geodesic zonal flow on $S^{3}$.
	Then,
	there exist $p,q\in{\mathbb R}$
	satisfying $a^{2}p^{2}\neq q^{2}$
	and a one-variable function $f=f(\chi)$
	such that
	$Z$ is transformed by an isometry to
	\begin{equation}
		f(p\partial_{1}+q\partial_{2}).
	\end{equation}
	\end{Proposition}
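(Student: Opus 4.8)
The plan is to reduce to the situation of Proposition \ref{Prop-zonal-non-geodesic-on-3D-ellipsoid}, the only genuinely new feature being that the round sphere $S^{3}$ has a much larger isometry group: every isometry of $S^{3}$ is the restriction of a linear map in $O(4)$, so the Killing vector fields on $S^{3}$ form a six-dimensional space, naturally identified with $\mathfrak{so}(4)$ via $A\mapsto X_{A}$, $X_{A}(x):=Ax$, and the isometry group acts on this space by conjugation, $g\cdot X_{A}=X_{gAg^{-1}}$. These structural facts can be recorded in Appendix \ref{Appendix-proof-Lemma-S2-Killing}, exactly as for the $S^{2}$ case.

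First I would write $Z=fX$ with $X$ a Killing vector field, say $X=X_{A}$ for some $A\in\mathfrak{so}(4)$. By the standard normal form for real skew-symmetric matrices there is $g\in SO(4)$ such that $gAg^{-1}$ is block-diagonal with the two $2\times 2$ blocks $\begin{pmatrix}0&-p\\ p&0\end{pmatrix}$ and $\begin{pmatrix}0&-q\\ q&0\end{pmatrix}$, arranged so that the first block acts on the $(x,y)$-plane and the second on the $(z,w)$-plane. Applying the isometry induced by $g$ (and replacing $Z$ by its push-forward), we may assume $X=X_{gAg^{-1}}$; its flow rotates the $(x,y)$-plane at angular speed $p$ and the $(z,w)$-plane at angular speed $q$, so in the coordinate \eqref{Hopf} with $a=1$ one computes directly that $X=p\partial_{\xi}+q\partial_{\mu}=p\partial_{1}+q\partial_{2}$. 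Hence, after this isometry, $Z=f(p\partial_{1}+q\partial_{2})$ for some $p,q\in\mathbb{R}$ and some function $f=f(\xi,\mu,\chi)$.

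It then remains to pin down $p,q$ and $f$, and this proceeds verbatim as in the proof of Proposition \ref{Prop-zonal-non-geodesic-on-3D-ellipsoid}. From \eqref{eq-gij-3D-ellipsoid} with $a=1$ we get $\|X\|^{2}=p^{2}\sin^{2}(\chi)+q^{2}\cos^{2}(\chi)$, which is non-constant if and only if $p^{2}\neq q^{2}$; since $Z$ is non-geodesic, Corollary \ref{Cor-geodesic-equiv} forces $a^{2}p^{2}\neq q^{2}$. Then $\operatorname{grad}(\|X\|^{2})$ is a nonzero multiple of $\partial_{3}$ by \eqref{eq-grad-3D-ellipsoid-Hopf}, so Lemma \ref{Lemma-zonal-grad-collinear} makes $\operatorname{grad}(f^{2})$ collinear with it, forcing $\partial_{1}f=\partial_{2}f=0$, i.e.\ $f=f(\chi)$. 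The companion Proposition \ref{Prop-zonal-geodesic-on-3D-sphere} follows by the same reduction, now with $a^{2}p^{2}=q^{2}$ and with Lemma \ref{Lemma-geodesic-zonal-fZ} in place of the last step, so that no constraint forces $f$ to depend on $\chi$ alone.

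The main obstacle is the first step: justifying that the Killing fields on $S^{3}$ are precisely the $X_{A}$ with $A\in\mathfrak{so}(4)$ and that the isometry group acts on them by conjugation — standard, but worth a careful citation — together with the elementary if slightly fiddly task of matching the abstract block-diagonal Killing field to its coordinate expression $p\partial_{\xi}+q\partial_{\mu}$ by integrating its flow. Everything past the reduction is identical to the $a\neq 1$ case and needs no new computation.
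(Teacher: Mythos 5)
Your proposal is correct and follows essentially the same route as the paper: the paper also reduces to the proof of Proposition \ref{Prop-zonal-non-geodesic-on-3D-ellipsoid}, replacing Lemma \ref{Lemma-Classification-3D-ellipsoid} by the lemma that any Killing field on $S^{3}$ can be moved by an isometry to $p\partial_{\xi}+q\partial_{\mu}$. The only cosmetic difference is that the paper justifies that normal form via the conjugacy statement $K\cdot\mathfrak{t}=\mathfrak{k}$ for the compact group $SO(4)$ (as in Appendix \ref{Appendix-proof-Lemma-S2-Killing}), whereas you use the equivalent concrete block-diagonalization of skew-symmetric matrices.
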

	The proofs are the same 
	for Propositions \ref{Prop-zonal-geodesic-on-3D-ellipsoid}
	and \ref{Prop-zonal-non-geodesic-on-3D-ellipsoid}
using the following lemma instead of Lemma \ref{Lemma-Classification-3D-ellipsoid}.

\begin{Lemma}
	Any Killing flow on $S^{3}$ can be transformed by an isometry
	to 
	$$
		p\partial_{\xi},
	+
	q\partial_{\mu}
	$$
	with $p,q\in{\mathbb R}$.
\end{Lemma}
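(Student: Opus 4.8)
The plan is to reduce the classification of Killing vector fields on $S^{3}$ to representation theory of compact Lie groups, just as was done for $S^{2}$ in the lemma proved in Appendix \ref{Appendix-proof-Lemma-S2-Killing}. First I would recall that the isometry group of the round sphere $S^{3}$ (with the standard metric inherited from ${\mathbb R}^{4}$, i.e.\ the case $a=1$ of \eqref{eq-gij-3D-ellipsoid}) is $O(4)$, so the Lie algebra of Killing vector fields is isomorphic to $\mathfrak{so}(4)$, which has dimension $6$. Since $\mathfrak{so}(4)\cong\mathfrak{so}(3)\oplus\mathfrak{so}(3)$, a maximal torus of $SO(4)$ is two-dimensional; concretely, a maximal torus consists of the block-diagonal rotations acting independently in the $(x,y)$-plane and the $(z,w)$-plane, and the corresponding Cartan subalgebra of Killing fields is spanned by the infinitesimal generators of those two rotations. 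In the Hopf-type coordinates \eqref{Hopf}, rotation in the $(x,y)$-plane is generated by $\partial_{\xi}$ and rotation in the $(z,w)$-plane by $\partial_{\mu}$, so the maximal torus gives exactly the span of $\partial_{\xi}$ and $\partial_{\mu}$.

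Next I would invoke the standard structure theory: every element of a compact connected Lie group is conjugate into a fixed maximal torus, and conjugation of isometries corresponds to pulling back the Killing vector field by the conjugating isometry. Hence any Killing vector field $X$ on $S^{3}$ can be transformed by an isometry $\Phi\in O(4)$ so that $\Phi_{*}X$ lies in the chosen Cartan subalgebra, i.e.\ $\Phi_{*}X=p\partial_{\xi}+q\partial_{\mu}$ for some $p,q\in{\mathbb R}$. This is precisely the assertion of the lemma. I would also note that we may restrict to the identity component $SO(4)$ without loss of generality, since the extra components of $O(4)$ do not enlarge the Lie algebra and an orientation-reversing conjugation changes nothing at the infinitesimal level.

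The main obstacle — really the only nontrivial point — is making the correspondence between the abstract maximal torus of $SO(4)$ and the coordinate vector fields $\partial_{\xi},\partial_{\mu}$ fully explicit and rigorous: one must check that the flows of $\partial_{\xi}$ and $\partial_{\mu}$ are genuinely the two commuting circle actions $(\xi,\mu,\chi)\mapsto(\xi+t,\mu,\chi)$ and $(\xi,\mu,\chi)\mapsto(\xi,\mu+t,\chi)$, that these are isometries of $g$ (immediate from \eqref{eq-gij-3D-ellipsoid} with $a=1$, since the metric coefficients depend only on $\chi$), and that they generate a torus that is maximal (equivalently, that no Killing field commuting with both $\partial_{\xi}$ and $\partial_{\mu}$ is linearly independent of them — this follows from $\dim\mathfrak{so}(4)=6$ and the rank being $2$, or can be seen by a direct computation analogous to Lemma \ref{Lemma-Classification-3D-ellipsoid}). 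Everything else is the conjugacy theorem for maximal tori applied verbatim. I would present the argument in Appendix \ref{Appendix-proof-Lemma-S2-Killing} alongside the $S^{2}$ case, since the two proofs share the same skeleton.
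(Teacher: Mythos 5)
Your argument is correct and is essentially the paper's own: the paper omits the proof as "similar to Lemma \ref{Lemma-S2-Killing-trans}," whose Appendix \ref{Appendix-proof-Lemma-S2-Killing} proof uses exactly the fact $K\cdot\mathfrak{t}=\mathfrak{k}$ for a maximal abelian subalgebra of a compact Lie algebra, here applied to $\mathfrak{so}(4)$ with the rank-$2$ Cartan subalgebra spanned by $\partial_{\xi}$ and $\partial_{\mu}$. The only cosmetic difference is that you phrase the conjugacy statement at the group level (maximal tori) rather than directly at the Lie algebra level as the cited Fact of Duistermaat--Kolk does.
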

\begin{proof}
	Because the proof is similar to Lemma \ref{Lemma-S2-Killing-trans},
	we omit it.
\end{proof}

\end{Remark}

\section{Misio{\l}ek curvature on $M^{3}_{a}$}
\label{Section-prove-Theorem-Main-2}
Finally,
we prove Theorem \ref{Theorem-Main-2}
in this section.
Recall that the positivity of zonal flow is defined in 
Definition \ref{Def-zonal-positive}.
\begin{proof}[Proof of Theorem \ref{Theorem-Main-2}]
By Theorem \ref{Theorem-Main},
it is enough to show that
any zonal flow on $M^{3}_{a}$
with $\operatorname{supp}(Z)\subset M^{3}_{a}\backslash 
(N\cup S)$
is positive.
By Corollary \ref{Cor-non-geodesic-S1-zonal-on-3D-ellipsoid},
we have
\begin{equation}
Z=f(p\partial_{1}+q\partial_{2}),
\end{equation}
where $f=f(\chi)$ is a one-variable function and
$p,q\in{\mathbb R}$
satisfying $a^{2}p^{2}\neq q^{2}$.
By the assumption
$\operatorname{supp}(Z)\subset M^{3}_{a}\backslash 
(N\cup S)$,
there exists $\varepsilon>0$
such that
\begin{eqnarray*}
	f(\chi)\equiv0,\quad 
	\text{
	on
	}
	\chi \in 
	[0,\varepsilon)
	\cup
	\left(\frac{\pi}{2}-\varepsilon,\frac{\pi}{2}\right],
\end{eqnarray*}
which implies the existence of a maximal value of $f^{2}$
on $\chi\in(0,\frac{\pi}{2})$.

On the other hand,
we have
	\begin{eqnarray}
		\operatorname{grad}(\|X\|^{2})
		&=&
		\frac{2(a^{2}p^{2}-q^{2})\sin(\chi)\cos(\chi)}{a^{2}\cos^{2}(\chi)+\sin^{2}(\chi)}
		\partial_{3}
	\end{eqnarray}
	by \eqref{eq-gij-3D-ellipsoid} and \eqref{eq-grad-3D-ellipsoid-Hopf}.
	Thus, $\operatorname{grad}(\|X\|^{2})$ is always
	positive or negative 
	on $\chi\in(0,\frac{\pi}{2})$
	by $a^{2}p^{2}\neq q^{2}$.
	Moreover,
	we have
	\begin{eqnarray*}
		\operatorname{grad}(f^{2})
		=
		\frac{\partial_{3}f^{2}}{a^{2}\cos^{2}(\chi)+\sin^{2}(\chi)}
	\partial_{3}
	\end{eqnarray*}
	by
	\eqref{eq-grad-3D-ellipsoid-Hopf}.
	However,
	the existence of 
	a maximal value of $f^{2}$
	implies
	that
	$\partial_{\chi}(f^{2})$
	takes both 
	positive and negative values
	on $\chi\in(0,\frac{\pi}{2})$.
	Because 
	$a^{2}\cos^{2}(\chi)+\sin^{2}(\chi)$
	is always positive,
	this completes the proof.
\end{proof}

\appendix

\section{Proof of Lemma \ref{Lemma-S2-Killing-trans}}
\label{Appendix-proof-Lemma-S2-Killing}
In this section,
we prove Lemma \ref{Lemma-S2-Killing-trans}.
For this end,
we recall the following elementary fact of the theory of compact groups.
\begin{Fact}[{\cite[Prop.~(3.8.1)]{Duistermaat}}]
	Let $K$ be a compact Lie group 
	with Lie algebra ${\mathfrak k}$
	and
	${\mathfrak t}$
	a maximal abelian subalgebra of
	${\mathfrak k}$
	Then,
	we have
	\begin{eqnarray*}
	K\cdot {\mathfrak t}
	={\mathfrak k}.
	\end{eqnarray*}
\end{Fact}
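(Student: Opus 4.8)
The plan is to establish the nontrivial inclusion $\mathfrak{k} \subseteq K \cdot \mathfrak{t}$ (the reverse $K\cdot\mathfrak{t}\subseteq\mathfrak{k}$ is immediate, since $\mathrm{Ad}(k)$ preserves $\mathfrak{k}$) by a compactness-plus-critical-point argument. I may assume $K$ is connected, because $\mathfrak{k}$ is the Lie algebra of the identity component and enlarging the group only enlarges the orbit $K\cdot\mathfrak{t}$. First I would fix an $\mathrm{Ad}(K)$-invariant inner product $\langle\cdot,\cdot\rangle$ on $\mathfrak{k}$, which exists by averaging over the compact group $K$; the feature I will exploit is its infinitesimal invariance $\langle [A,B],C\rangle = -\langle B,[A,C]\rangle$ for all $A,B,C\in\mathfrak{k}$.

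Next I would record the structural input. A maximal abelian $\mathfrak{t}$ is its own centralizer, $\mathfrak{z}_{\mathfrak{k}}(\mathfrak{t})=\mathfrak{t}$: if $Z$ commutes with all of $\mathfrak{t}$ then $\mathfrak{t}+\mathbb{R}Z$ is abelian, so $Z\in\mathfrak{t}$ by maximality. I then need a \emph{regular} element $H\in\mathfrak{t}$, meaning $\mathfrak{z}_{\mathfrak{k}}(H)=\mathfrak{t}$. Its existence I would draw from the weight decomposition of the adjoint action of $\mathfrak{t}$: after complexifying, $\mathfrak{t}_{\mathbb{C}}$ is a Cartan subalgebra of $\mathfrak{k}_{\mathbb{C}}$ (the zero weight space equals $\mathfrak{t}_{\mathbb{C}}$ precisely by the self-centralizing property just noted), and $\mathrm{ad}(H)$ acts on the root space $\mathfrak{g}_\alpha$ by the scalar $\alpha(H)$. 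Thus $\mathfrak{z}_{\mathfrak{k}}(H)=\mathfrak{t}$ exactly when $\alpha(H)\neq 0$ for every root $\alpha$; since there are finitely many roots and each $\{\alpha=0\}$ is a proper subspace of $\mathfrak{t}$, regular elements exist and are in fact dense.

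With $H$ fixed and an arbitrary $X\in\mathfrak{k}$ given, the heart of the matter is to maximize $f(k):=\langle \mathrm{Ad}(k)X,\,H\rangle$ over $k\in K$. By compactness $f$ attains its maximum at some $k_0$, and differentiating along $t\mapsto\exp(tY)k_0$ yields the first-order condition $\langle [Y,Z],H\rangle=0$ for every $Y\in\mathfrak{k}$, where $Z:=\mathrm{Ad}(k_0)X$. The invariance identity gives $\langle [Y,Z],H\rangle=\langle Y,[Z,H]\rangle$, so $[Z,H]=0$, whence $Z\in\mathfrak{z}_{\mathfrak{k}}(H)=\mathfrak{t}$. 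Therefore $X=\mathrm{Ad}(k_0^{-1})Z\in K\cdot\mathfrak{t}$, and as $X$ was arbitrary this proves $\mathfrak{k}\subseteq K\cdot\mathfrak{t}$.

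The step I expect to be the main obstacle is the existence of a regular $H$ with $\mathfrak{z}_{\mathfrak{k}}(H)=\mathfrak{t}$: this is the one place where genuine structure theory of compact Lie algebras (the root decomposition together with the self-centralizing property of a maximal abelian subalgebra) is unavoidable, whereas the maximization argument is soft and purely variational. One could try to sidestep roots by showing instead that $H\mapsto\dim\mathfrak{z}_{\mathfrak{k}}(H)$ is upper semicontinuous on $\mathfrak{t}$ with generic value $\dim\mathfrak{t}$, but pinning down that generic value reduces to the same eigenvalue computation, so I would simply invoke the weight decomposition directly.
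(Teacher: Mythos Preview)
Your proof is correct. The paper itself does not prove this statement at all: it is recorded as a Fact with a citation to Duistermaat--Kolk and then used as a black box in the proof of Lemma~\ref{Lemma-S2-Killing-trans}. The argument you give---fix an $\mathrm{Ad}(K)$-invariant inner product, choose a regular $H\in\mathfrak{t}$, and for arbitrary $X$ maximize $k\mapsto\langle\mathrm{Ad}(k)X,H\rangle$ over the compact group to force $[\mathrm{Ad}(k_0)X,H]=0$---is precisely the standard textbook proof (and indeed is the one in the cited reference). Your identification of the existence of a regular element as the only step requiring real structure theory is also accurate.
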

\begin{proof}[Proof of Lemma \ref{Lemma-S2-Killing-trans}]
Let $SO(3)$ be the orthogonal group.
Then, $SO(3)$ acts on ${\mathbb R}^{3}$
by isometries.
Thus,
we have
$$SO(3)/SO(2)\simeq S^{2}.$$
Moreover,
the Lie algebra ${\mathfrak g}$ of $G:=SO(3)$
is isomorphic to the space of Killing vector fields on $S^{2}$.
We note that any maximal abelian subalgebra of ${\mathfrak g}$
is one-dimensional.
Therefore,
any Killing vector fields on $S^{2}$
can be transformed by an isometry 
to $\partial_{\theta}$.
This completes the proof.
\end{proof}

\section{Proof of Lemma \ref{Lemma-existence-Y-nonzero}}
\label{Appendix-proof-of-Lemma-Y-existence}
In this section,
we prove Lemma \ref{Lemma-existence-Y-nonzero}.
For this end,
we prepare some elementary lemmas
in the following three sections.

\subsection{Compactly supported divergence free vector fields on ${\mathbb R}^{k}$}
In this section,
we prove a elementary lemma,
which states that 
there exists many compactly supported divergence-free vector fields on ${\mathbb R}^{k}$
with respect to any volume form on ${\mathbb R}^{k}$.

Let $M$ be an orientable manifold
with a volume form $\mu$,
namely,
$\mu$ is a nowhere vanishing $(\dim M)$-form on $M$.
Note that we do not assume $M$ is a Riemannian manifold.
Recall that 
the divergence of a vector field $Y$ on $M$ 
is defined by
\begin{equation}
	\operatorname{div}(Y)\mu
	=
	L_{Y}(\mu)
	=
	d\circ i_{Y}(\mu),
	\label{eq-def-div-di}
\end{equation}
where $L_{Y}$ is the Lie derivative,
$i_{Y}$ is the interior derivative
and
$d$ is the exterior derivative.
We also write $\operatorname{div}_{\mu}(Y):=
\operatorname{div}(Y)$
when we want to emphasis that
we calculate the divergence with respect to $\mu$.
Let 
\begin{eqnarray}
	{\mathfrak X}_{\mu}(M):=\{X\in{\mathfrak X}(M)\mid \operatorname{div}(X)=0\} 
\end{eqnarray}
be the space of divergence-free vector fields on $M$.
\begin{Lemma}
\label{Lemma-div-mu=Hmu}
	Let $M$ be a manifold
	with
	a volume form $\mu$
	and
	$H$ a nowhere vanishing function $H$ on $M$.
	Define another volume form on $M$ by
	$\mu_{H}:=H\mu$.
	Then,
	we have
	\begin{equation}
		\operatorname{div}_{\mu_{H}}(Y)
		=
		\operatorname{div}_{\mu}(HY).
	\end{equation}
	In particular,
	$Y\in{\mathfrak X}_{\mu_{H}}(M)$
	if and only if 
	$HY\in{\mathfrak X}_{\mu}(M)$.
\end{Lemma}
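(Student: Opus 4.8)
The plan is to argue directly from the definition \eqref{eq-def-div-di}, which expresses the divergence through the Cartan formula $\operatorname{div}_{\nu}(Y)\,\nu = d\circ i_{Y}(\nu)$ for an arbitrary volume form $\nu$. First I would record the two elementary contraction identities: since $H$ is a $0$-form and $i_{Y}$ is $C^{\infty}(M)$-linear in the form slot, $i_{Y}(H\mu) = H\,i_{Y}(\mu)$; and since the contraction is $C^{\infty}(M)$-linear in the vector slot, $i_{HY}(\mu) = H\,i_{Y}(\mu)$. Combining them gives the key pointwise identity $i_{Y}(\mu_{H}) = i_{Y}(H\mu) = H\,i_{Y}(\mu) = i_{HY}(\mu)$.

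Next I would apply the exterior derivative to both ends of this chain and invoke \eqref{eq-def-div-di} twice, once with the form $\mu_{H}$ and the field $Y$, once with the form $\mu$ and the field $HY$. This yields $\operatorname{div}_{\mu_{H}}(Y)\,\mu_{H} = d\circ i_{Y}(\mu_{H}) = d\circ i_{HY}(\mu) = \operatorname{div}_{\mu}(HY)\,\mu$. Since $\mu_{H}=H\mu$ with $H$ nowhere vanishing, both sides are functional multiples of the single nowhere-vanishing top-degree form $\mu$, so one may cancel $\mu$ and read off the asserted relation between $\operatorname{div}_{\mu_{H}}(Y)$ and $\operatorname{div}_{\mu}(HY)$. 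The ``in particular'' clause is then immediate: because $H$ never vanishes, $\operatorname{div}_{\mu_{H}}(Y)=0$ if and only if $\operatorname{div}_{\mu}(HY)=0$, i.e.\ $Y\in{\mathfrak X}_{\mu_{H}}(M) \iff HY\in{\mathfrak X}_{\mu}(M)$.

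There is essentially no obstacle here: the statement is local and algebraic, involving no analysis or global topology, so the whole argument is bookkeeping with the interior product and $d$. The only point deserving explicit mention is the $C^{\infty}(M)$-bilinearity of the contraction $(Y,\omega)\mapsto i_{Y}\omega$, which is precisely what makes $i_{Y}(\mu_{H})$ and $i_{HY}(\mu)$ agree; once that is noted, applying $d$ and dividing by the nowhere-vanishing $\mu$ finishes the proof.
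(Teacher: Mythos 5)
Your argument is correct in substance and is essentially the paper's own proof: the authors run exactly the same chain $\operatorname{div}_{\mu_{H}}(Y)\,\mu_{H}=d\circ i_{Y}(H\mu)=d\circ i_{HY}(\mu)=\operatorname{div}_{\mu}(HY)\,\mu$, resting on the same $C^{\infty}(M)$-bilinearity of the interior product. One caveat, which your write-up shares with the paper: after cancelling the nowhere-vanishing $\mu$, the left side still carries the factor $H$ coming from $\mu_{H}=H\mu$, so the chain actually yields $H\operatorname{div}_{\mu_{H}}(Y)=\operatorname{div}_{\mu}(HY)$ rather than the displayed identity verbatim; since $H$ is nowhere vanishing this discrepancy is harmless for the ``in particular'' equivalence $Y\in{\mathfrak X}_{\mu_{H}}(M)\iff HY\in{\mathfrak X}_{\mu}(M)$, which is the only part used later.
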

\begin{proof}
We have
\begin{eqnarray*}
	\operatorname{div}_{\mu_{H}}(Y)\mu_{H}
	&=&
	d\circ i_{Y}(\mu_{H})
	\\
	&=&
	d\circ i_{Y}(H\mu)
	\\
	&=&
	d\circ i_{HY}(\mu)
	\\
	&=&
	\operatorname{div}_{\mu}(HY)\mu,
\end{eqnarray*}
which completes the proof.
\end{proof}

Let $\operatorname{ev}_{p}:{\mathfrak X}(M)\to T_{p}M$
be the evaluation map,
where $T_{p}M$ is the tangent space of $M$ at $p\in M$.
Namely,
\begin{eqnarray}
	\operatorname{ev}_{p}:{\mathfrak X}(M)&\to& T_{p}M
	\label{eq-def-ev}\\
Y&\mapsto&\operatorname{ev}_{p}(Y):=Y_{p}.
\end{eqnarray}
We write 
${\mathfrak X}_{\mu,c}(M)$
for the space
of compactly supported divergence-free vector fields on $M$.

\begin{Lemma}
\label{Lemma-surj-Rk-ev}
Let $k\geq 2$
and $\mu_{0}$ the usual volume form on ${\mathbb R}^{k}$.
Then,
	for any $p\in {\mathbb R}^{k}$,
	the restriction of the evaluation map 
	$\operatorname{ev}_{p}:{\mathfrak X}_{\mu_{0},c}({\mathbb R}^{k}) \to T_{p}{\mathbb R}^{k}$
	is surjective.
\end{Lemma}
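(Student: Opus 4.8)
The plan is to reduce to a finite set of prescribed values by linearity and then write down explicit stream-function (Hamiltonian-type) vector fields. First I would note that $\mathfrak{X}_{\mu_{0},c}(\mathbb{R}^{k})$ is a real vector space, that $\operatorname{ev}_{p}$ is linear, and that the whole situation is translation-invariant; hence it suffices to treat $p=0$, and since $\{\partial_{1}|_{0},\dots,\partial_{k}|_{0}\}$ is a basis of $T_{0}\mathbb{R}^{k}$, it is enough to produce, for each $j\in\{1,\dots,k\}$, a field $Y_{j}\in\mathfrak{X}_{\mu_{0},c}(\mathbb{R}^{k})$ with $\operatorname{ev}_{0}(Y_{j})=\partial_{j}|_{0}$. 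Then $\operatorname{ev}_{0}\big(\sum_{j}v^{j}Y_{j}\big)=\sum_{j}v^{j}\partial_{j}|_{0}$ exhausts $T_{0}\mathbb{R}^{k}$.

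Next, fix $j$. Using $k\geq 2$, I would choose an index $\ell\neq j$ and a compactly supported smooth function $\phi$ on $\mathbb{R}^{k}$ with $\phi\equiv 1$ on a neighborhood of the origin, set $\psi:=x_{\ell}\phi$, and define
$$
Y_{j}:=(\partial_{\ell}\psi)\,\partial_{j}-(\partial_{j}\psi)\,\partial_{\ell}.
$$
I would then verify the three required properties: $\operatorname{supp}(Y_{j})\subset\operatorname{supp}(\phi)$, so $Y_{j}$ is compactly supported; the only nonzero components of $Y_{j}$ are in the $\partial_{j}$ and $\partial_{\ell}$ directions, and since $\ell\neq j$,
$$
\operatorname{div}_{\mu_{0}}(Y_{j})=\partial_{j}(\partial_{\ell}\psi)-\partial_{\ell}(\partial_{j}\psi)=0 ,
$$
so $Y_{j}\in\mathfrak{X}_{\mu_{0},c}(\mathbb{R}^{k})$; and because $\psi$ coincides with the coordinate function $x_{\ell}$ near $0$, we have $\partial_{\ell}\psi(0)=1$ and $\partial_{j}\psi(0)=0$, hence $\operatorname{ev}_{0}(Y_{j})=\partial_{j}|_{0}$. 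Combining with the first paragraph and undoing the translation gives the claim for arbitrary $p$.

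There is no substantial obstacle here; the only point requiring care — and the only place the hypothesis $k\geq 2$ enters — is that the Hamiltonian vector field of a stream function vanishes wherever that function is locally constant, so one cannot simply take $Y_{j}=(\partial_{\ell}\phi)\partial_{j}-(\partial_{j}\phi)\partial_{\ell}$ (which is identically zero near $0$). Multiplying the cutoff by the coordinate $x_{\ell}$, which requires a second coordinate direction to exist, repairs the value at the origin while preserving both compact support and the divergence-free condition, and everything else is routine.
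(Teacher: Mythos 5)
Your proof is correct. The paper also argues by explicit construction, but with a different choice of fields: for $k=2$ and $p=(0,0)$ it takes rigid rotations about the displaced centers $(1,0)$ and $(0,1)$, cut off by a radial bump function, namely $Y_{1}=\rho(R_{1})\left(y\partial_{x}-(x-1)\partial_{y}\right)$ with $R_{1}=\sqrt{(x-1)^{2}+y^{2}}$ and similarly $Y_{2}$; these are divergence-free because the cutoff is constant along the orbits of the rotation field, and they evaluate at the origin to $\partial_{y}$ and $-\partial_{x}$, which span $T_{0}\mathbb{R}^{2}$. The paper only writes out this special case and leaves general $k$ and $p$ implicit. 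Your stream-function construction $Y_{j}=(\partial_{\ell}\psi)\partial_{j}-(\partial_{j}\psi)\partial_{\ell}$ with $\psi=x_{\ell}\phi$ accomplishes the same thing but treats every $k\geq 2$, every coordinate direction $j$, and (via translation) every $p$ uniformly, with the divergence-free property reduced to equality of mixed partials rather than to a geometric orthogonality; it is the more complete write-up. Both arguments use the hypothesis $k\geq 2$ in the same essential way: a second coordinate direction is needed to produce a nontrivial volume-preserving field supported near a point, exactly as you observe in your closing remark.
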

\begin{proof}
	We only prove in the case $k=2$
	and $p=(0,0)\in{\mathbb R}^{2}$.
	Let $\rho:{\mathbb R}\to {\mathbb R}$
	be a compactly supported smooth function
	satisfying $\rho\equiv 1$ on $[-1,1]$.
	Define functions $R_{1},R_{2}$
	by
	\begin{eqnarray*}
		R_{1}
		&:=&
		\sqrt{(x-1)^{2}+y^{2}},
		\\
		R_{2}
		&:=&
		\sqrt{x^{2}+(y-1)^{2}},
	\end{eqnarray*}
	for
	$(x,y)\in{\mathbb R}^{2}$.
	Moreover,
	we define vector fields
	\begin{eqnarray*}
		Y_{1}&:=&\rho(R_{1})
		\left(
		y\partial_{x}
		-
		(x-1)\partial_{y}
		\right),
		\\
		Y_{2}&:=&\rho(R_{2})
		\left(
		(y-1)\partial_{x}
		-
		x\partial_{y}
		\right).
	\end{eqnarray*}
	Recall that for a vector field $u=a\partial_{x}+b\partial_{y}$,
	we have
	\begin{eqnarray}
		\operatorname{div}_{\mu_{0}}(u)
		=
		\partial_{x}a +
		\partial_{y}b.
	\end{eqnarray}
	Thus,
	it is obvious that 
	$Y_{1},Y_{2}\in {\mathfrak X}_{\mu_{0},c}({\mathbb R}^{k})$
	and
	\begin{eqnarray}
		\operatorname{ev}_{(0,0)}(Y_{1})=\partial_{y},
		\quad
		\operatorname{ev}_{(0,0)}(Y_{1})=-\partial_{x}.
	\end{eqnarray}
	This completes the proof.
\end{proof}

\begin{Corollary}
\label{Cor-surjection-Rk-vect-div}
	Let $k\geq 2$
	and $\mu$ a volume form on ${\mathbb R}^{k}$.
	Then,
	for any $p\in {\mathbb R}^{k}$,
	the restriction of the evaluation map 
	$\operatorname{ev}_{p}:{\mathfrak X}_{\mu,c}({\mathbb R}^{k}) \to T_{p}{\mathbb R}^{k}$
	is surjective.
\end{Corollary}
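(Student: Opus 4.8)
The plan is to deduce the corollary from Lemma \ref{Lemma-surj-Rk-ev} by absorbing the general volume form into the vector field. First I would write $\mu = H\mu_{0}$, where $\mu_{0}$ is the usual volume form on ${\mathbb R}^{k}$ and $H$ is the unique smooth function with this property; since ${\mathbb R}^{k}$ is connected and both $\mu$ and $\mu_{0}$ are nowhere vanishing, $H$ is nowhere vanishing (of constant sign), so $H^{-1}$ is again a smooth function on ${\mathbb R}^{k}$, and multiplication by $H^{\pm 1}$ preserves supports.

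Next, given $p\in{\mathbb R}^{k}$ and $v\in T_{p}{\mathbb R}^{k}$, I would apply Lemma \ref{Lemma-surj-Rk-ev} (here $k\geq 2$ is used) to obtain $Z\in{\mathfrak X}_{\mu_{0},c}({\mathbb R}^{k})$ with $\operatorname{ev}_{p}(Z)=H(p)\,v$, and then set $Y:=H^{-1}Z$. Then $\operatorname{supp}(Y)=\operatorname{supp}(Z)$ is compact, and $HY=Z\in{\mathfrak X}_{\mu_{0},c}({\mathbb R}^{k})$, so $Y\in{\mathfrak X}_{\mu,c}({\mathbb R}^{k})$ by Lemma \ref{Lemma-div-mu=Hmu} applied with base volume form $\mu_{0}$ (so that $\mu$ plays the role of $\mu_{0,H}$ and $\operatorname{div}_{\mu}(Y)=\operatorname{div}_{\mu_{0}}(HY)=0$). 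Finally $\operatorname{ev}_{p}(Y)=H(p)^{-1}\operatorname{ev}_{p}(Z)=H(p)^{-1}H(p)\,v=v$, which gives the surjectivity of $\operatorname{ev}_{p}$ onto $T_{p}{\mathbb R}^{k}$.

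I do not expect any genuine obstacle here: the substantive content is already in Lemma \ref{Lemma-surj-Rk-ev}, and the only points deserving a word of care are that the density of $\mu$ relative to $\mu_{0}$ is a globally defined, smooth, nowhere-vanishing function on the connected manifold ${\mathbb R}^{k}$ (so dividing by it is legitimate and preserves compact support) and that Lemma \ref{Lemma-div-mu=Hmu} is being invoked in the direction "$HY$ divergence-free for $\mu_{0}$ $\Rightarrow$ $Y$ divergence-free for $\mu$". Both are immediate.
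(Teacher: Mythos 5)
Your argument is correct and is essentially the paper's own proof: both write $\mu=H\mu_{0}$ with $H$ nowhere vanishing, use Lemma \ref{Lemma-div-mu=Hmu} to identify ${\mathfrak X}_{\mu,c}$ with $H^{-1}\cdot{\mathfrak X}_{\mu_{0},c}$, observe that $\operatorname{ev}_{p}(HY)=H(p)\operatorname{ev}_{p}(Y)$ with $H(p)\neq 0$, and then invoke Lemma \ref{Lemma-surj-Rk-ev}. The only cosmetic difference is that you construct the preimage $Y=H^{-1}Z$ explicitly, whereas the paper states the equality of images $\operatorname{ev}_{p}({\mathfrak X}_{\mu,c})=\operatorname{ev}_{p}({\mathfrak X}_{\mu_{0},c})$ directly.
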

\begin{proof}
	By the assumption,
	there exists a nowhere vanishing function $H$
	on ${\mathbb R}^{k}$
	such that 
	\begin{eqnarray}
		\mu=H\mu_{0}.
	\end{eqnarray}
	Then,
	$Y\in{\mathfrak X}_{\mu,c}(M)$
	if and only if 
	$HY\in{\mathfrak X}_{\mu_{0},c}(M)$
	for any $Y\in{\mathfrak X}(M)$
	by Lemma \ref{Lemma-div-mu=Hmu}.
	By definition,
	we have
	\begin{eqnarray}
		\operatorname{ev}_{p}(HY)
		=
		H(p)
		\operatorname{ev}_{p}(Y).
	\end{eqnarray}
	Moreover,
	$H(p)\neq 0$
	because $H$ is nowhere vanishing.
	Thus,
	we have
	$$
	\operatorname{ev}_{p}
	\left({\mathfrak X}_{\mu,c}({\mathbb R}^{k})
	\right)
	=
	\operatorname{ev}_{p}
	\left({\mathfrak X}_{\mu_{0},c}({\mathbb R}^{k})
	\right).
	$$
	Therefore,
	the corollary follows from Lemma \ref{Lemma-surj-Rk-ev}.
\end{proof}

\subsection{Lie group theory}
\label{Section-Lie-group}
In this section,
we recall 
some elementary theory of Lie groups.
For example,
see \cite{Geometry-of-Lie-groups}
for more details.

Let $G$ be a Lie group,
$G/H$ a homogeneous $G$-space,
and $G$ act on $G/H\times {\mathbb R}^{k}$
via the first factor.
We write ${\mathfrak X}(G/H\times {\mathbb R}^{k})^{G}$
for the space of $G$-invariant vector fields on 
$G/H\times {\mathbb R}^{k}$.
For any $gH\in G/H$,
let $\iota_{gH}$ be a closed embedding 
	\begin{eqnarray*}
		\iota_{gH}:{\mathbb R}^{k}
		&\to &
		 G/H\times {\mathbb R}^{k},
		\\
		x
		&\mapsto &
		\iota_{gH}(x):=(gH,x).
	\end{eqnarray*}
Then,
there exists 
an isomorphism
\begin{eqnarray}
	{\mathfrak X}({\mathbb R}^{k})
	&\simeq &  {\mathfrak X}(G/H\times {\mathbb R}^{k})^{G}
	\label{eq-isom-vect-G-inv}\\
	Y
		&\mapsto &
		\left(
		(gH,x)
		\mapsto
		\widetilde{Y}_{(gH,x)}
		:=
		\iota_{gH*}(Y_{x})
		\right),
		\nonumber
\end{eqnarray}
where $\iota_{gH*}$ is the pushforward by $\iota_{gH}$.
Moreover,
if $G$ is connected,
we have
\begin{equation}
	{\mathfrak X}(G/H\times {\mathbb R}^{k})^{G}
	=
	{\mathfrak X}(G/H\times {\mathbb R}^{k})^{\mathfrak g},
	\label{eq-isom-vect-G-g-connect}
\end{equation}
where ${\mathfrak g}$ is the Lie algebra of $G$
and 
we set
$$
{\mathfrak X}(G/H\times {\mathbb R}^{k})^{\mathfrak g}:=
\{
Y\in{\mathfrak X}(G/H\times {\mathbb R}^{k})\mid
[X,Y]=0\text{
for any
}
X\in{\mathfrak g}
\}.
$$
Here,
we identify $X\in{\mathfrak g}$
with the corresponding vector field on $G/H\times {\mathbb R}^{k}$.

Next,
we recall the top form version of the isomorphism \eqref{eq-isom-vect-G-inv}.
For this,
we suppose that there exists a $G$-invariant 
$n$-form $\omega_{0}$ on $G/H$,
where $n:=\dim G/H$.
Let 
\begin{equation}
p:G/H\times {\mathbb R}^{k}\to G/H,
\quad
q:G/H\times {\mathbb R}^{k}\to {\mathbb R}^{k}
\end{equation}
be the projections to each factor.
Then,
we have an isomorphism
\begin{eqnarray}
	{\mathcal E}^{k}({\mathbb R}^{k})
	&\simeq &  {\mathcal E}^{n+k}(G/H\times {\mathbb R}^{k})^{G}
	\label{eq-isom-top-form-G-inv}\\
	\mu
		&\mapsto &
		p^{*}(\omega_{0})\wedge
		q^{*}(\mu),
		\nonumber
\end{eqnarray}
where
${\mathcal E}^{k}({\mathbb R}^{k})$
is the space of $k$-th forms on ${\mathbb R}^{k}$
and 
$p^{*}$ is the pull back by $p$.

The following lemma states that
the isomorphism \eqref{eq-isom-vect-G-inv}
preserves 
the divergent 
if given volume form on $G/H\times {\mathbb R}^{k}$ 
is $G$-invariant.

\begin{Lemma}
\label{Lemma-isom-G-vect-inv-non-c}
	Let $G$ be a Lie group,
	$G/H$ a homogeneous $G$-space.
	Suppose that there exists a $G$-invariant $n$-form 
	$\omega_{0}$ on $G/H$ with $n:=\dim G/H$.
	Then,
	for any
	$G$-invariant $(n+k)$-form $\omega$ on $G/H\times {\mathbb R}^{k}$,
	there exists a $k$-form $\mu$ on ${\mathbb R}^{k}$
	such that
	the isomorphism \eqref{eq-isom-vect-G-inv}
	induces
	\begin{equation}
		{\mathfrak X}_{\mu}({\mathbb R}^{k})
	\simeq  {\mathfrak X}_{\omega}(G/H\times {\mathbb R}^{k})^{G}.
	\label{eq-Lemma-isom-vect-G-inv}
	\end{equation}
\end{Lemma}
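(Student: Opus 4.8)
The plan is to reduce the statement to the single identity
$\operatorname{div}_{\omega}(\widetilde{Y}) = \operatorname{div}_{\mu}(Y)\circ q$
relating the divergence of $Y$ and of its $G$-invariant extension $\widetilde{Y}$, where $q:G/H\times{\mathbb R}^{k}\to{\mathbb R}^{k}$ is the second projection. First I would produce the claimed $\mu$: applying the top-form isomorphism \eqref{eq-isom-top-form-G-inv} to the $G$-invariant $(n+k)$-form $\omega$ (which, being a volume form, is nowhere vanishing) gives a $k$-form $\mu$ on ${\mathbb R}^{k}$ with $\omega=p^{*}\omega_{0}\wedge q^{*}\mu$, and $\mu$ is automatically nowhere vanishing since $\omega_{0}$ and $\omega$ are. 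It then remains to check that \eqref{eq-isom-vect-G-inv} restricts to the asserted bijection \eqref{eq-Lemma-isom-vect-G-inv}.

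Next I would unwind the definition $\operatorname{div}_{\omega}(\widetilde{Y})\,\omega = d\,i_{\widetilde{Y}}\omega$. The key geometric observation is that $\widetilde{Y}_{(gH,x)}=\iota_{gH*}(Y_{x})$ is \emph{vertical} for the first projection $p$ (it lies in the ${\mathbb R}^{k}$-summand of $T_{(gH,x)}(G/H\times{\mathbb R}^{k})$) and is $q$-related to $Y$, because $q\circ\iota_{gH}=\mathrm{id}_{{\mathbb R}^{k}}$. Hence $i_{\widetilde{Y}}(p^{*}\omega_{0})=0$ and $i_{\widetilde{Y}}(q^{*}\mu)=q^{*}(i_{Y}\mu)$, so the Leibniz rule for the interior product yields $i_{\widetilde{Y}}\omega = (-1)^{n}\,p^{*}\omega_{0}\wedge q^{*}(i_{Y}\mu)$.

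Then I would apply $d$, using $d(p^{*}\omega_{0})=p^{*}(d\omega_{0})=0$ (since $\omega_{0}$ is a top-degree form on $G/H$, so $d\omega_{0}$ is an $(n+1)$-form on an $n$-manifold) together with $d\,q^{*}(i_{Y}\mu)=q^{*}(d\,i_{Y}\mu)=q^{*}(\operatorname{div}_{\mu}(Y)\,\mu)$. This gives $d\,i_{\widetilde{Y}}\omega = p^{*}\omega_{0}\wedge q^{*}(\operatorname{div}_{\mu}(Y)\,\mu)=(\operatorname{div}_{\mu}(Y)\circ q)\cdot\omega$, and since $\omega$ is nowhere vanishing we conclude $\operatorname{div}_{\omega}(\widetilde{Y})=\operatorname{div}_{\mu}(Y)\circ q$. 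Therefore $\widetilde{Y}$ is $\omega$-divergence-free precisely when $Y$ is $\mu$-divergence-free; combined with the fact that \eqref{eq-isom-vect-G-inv} already maps into $G$-invariant vector fields, this establishes \eqref{eq-Lemma-isom-vect-G-inv}.

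I expect the only delicate part to be the bookkeeping of the second and third steps: stating the verticality and $q$-relatedness of $\widetilde{Y}$ precisely and tracking the wedge/interior-product signs. None of this is conceptually hard — the degree count forcing $d\omega_{0}=0$ and the nowhere-vanishing of $\omega$ do all the real work — so the main risk is merely a misplaced sign, which in any case does not affect the divergence-free condition being characterized.
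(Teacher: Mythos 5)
Your proposal is correct and follows essentially the same route as the paper: obtain $\mu$ from the splitting $\omega=p^{*}\omega_{0}\wedge q^{*}\mu$ via \eqref{eq-isom-top-form-G-inv}, then use the Leibniz rule for $i_{\widetilde{Y}}$ together with $p_{*}\widetilde{Y}=0$ and $q_{*}\widetilde{Y}=Y$ (your verticality and $q$-relatedness) and commute $d$ with pullbacks to get $\operatorname{div}_{\omega}(\widetilde{Y})=\operatorname{div}_{\mu}(Y)\circ q$. The paper's proof is exactly this computation, so no further comparison is needed.
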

\begin{proof}
Let $\omega$ be a
$G$-invariant $n$-form on $G/H$.
Then,
	by the isomorphism \eqref{eq-isom-top-form-G-inv},
	there exists a $k$-form $\mu$ on ${\mathbb R}^{k}$
	such that we have
	\begin{equation}
		\omega=p^{*}(\omega_{0})\wedge
		q^{*}(\mu).
	\end{equation}
Then,
for any $Y\in{\mathfrak X}_{\mu}({\mathbb R}^{k})$,
we have
\begin{eqnarray*}
	\operatorname{div}_{\omega}(\widetilde{Y})\omega
	&=&
	d\circ i_{\widetilde{Y}} 
	\left(
	p^{*}(\omega_{0})\wedge
		q^{*}(\mu)
	\right)
\end{eqnarray*}
by \eqref{eq-def-div-di}.
By the property of the interior derivative,
we have
\begin{eqnarray*}
	i_{\widetilde{Y}} 
	\left(
	p^{*}(\omega_{0})\wedge
		q^{*}(\mu)
		\right)
		&=&
	\left(
	i_{\widetilde{Y}} 
	p^{*}(\omega_{0})
	\right)	
	\wedge
		q^{*}(\mu)
		+
		(-1)^{n}
	p^{*}(\omega_{0})	
	\wedge
	\left(
	i_{\widetilde{Y}} 
		q^{*}(\mu)\right).
\end{eqnarray*}
By the definition of the pull back,
this is equal to
\begin{eqnarray}
	&=&
	p^{*}\left(
	i_{p_{*}\widetilde{Y}} 
	(\omega_{0})
	\right)	
	\wedge
		q^{*}(\mu)
		+
		(-1)^{n}
	p^{*}(\omega_{0})	
	\wedge
	q^{*}
	\left(
	i_{q_{*}\widetilde{Y}} 
		(\mu)
		\right).
		\label{eq-totyu-pqpq}
	\end{eqnarray}
	However,
	by the definition,
	we have
	\begin{eqnarray*}
		p_{*}(\widetilde{Y})=0,
		\quad
		q_{*}(\widetilde{Y})=Y.
	\end{eqnarray*}
	Thus,
	\eqref{eq-totyu-pqpq} is equal to
	\begin{eqnarray*}
		&=&
			(-1)^{n}
	p^{*}(\omega_{0})	
	\wedge
	q^{*}
	\left(
	i_{Y} 
		(\mu)
		\right).
	\end{eqnarray*}
	Thus,
	we have
	\begin{eqnarray*}
		\operatorname{div}_{\omega}(\widetilde{Y})\omega
		&=&
		d
		\left(
		(-1)^{n}
	p^{*}(\omega_{0})	
	\wedge
	q^{*}
	\left(
	i_{Y} 
		(\mu)
		\right)
		\right).
	\end{eqnarray*}
	By the property of the exterior derivative,
	this is equal to
	\begin{eqnarray*}
		&=&
		(-1)^{n}
		\left(
		d
	p^{*}(\omega_{0})	
	\right)
	\wedge
	q^{*}
	\left(
	i_{Y} 
		(\mu)
		\right)
		+
		(-1)^{2n}
	p^{*}(\omega_{0})	
	\wedge
	\left(
		d
	q^{*}
	\left(
	i_{Y} 
		(\mu)
		\right)
			\right)
			\\
			&=&
			p^{*}(\omega_{0})	
	\wedge
	\left(
		d
	q^{*}
	\left(
	i_{Y} 
		(\mu)
		\right)
			\right).
	\end{eqnarray*}
	Because $d$ and the pull back is commutative,
	we have
	\begin{eqnarray*}
		\operatorname{div}_{\omega}(\widetilde{Y})\omega
		&=&
		p^{*}(\omega_{0})	
	\wedge
	\left(
		q^{*}\left(d
		\circ
	i_{Y} 
		(\mu)
		\right)
			\right)
			\\
			&=&
			p^{*}(\omega_{0})	
	\wedge
	\left(
		q^{*}\left(\operatorname{div}_{\mu}(Y)
		\mu
		\right)
			\right)
			\\
			&=&
			\operatorname{div}_{\mu}(Y)
			\left(
			p^{*}(\omega_{0})	
	\wedge
		q^{*}\left(
		\mu
		\right)
		\right)
		\\
		&=&
		\operatorname{div}_{\mu}(Y)
		\omega.
	\end{eqnarray*}
	This completes the proof.
\end{proof}

\subsection{$S^{1}$ case}
In this section,
we apply the results of Section \ref{Section-Lie-group}
to the case $G:=S^{1}$.

Let $N$ be a one-dimensional $G$-space.
Then,
it is obvious that there exists 
a $G$-invariant 1-form $\omega_{0}$ on $N$.
Therefore we have the following.
\begin{Lemma}
\label{Lemma-isom-S1-inv-vecto-div-c}
	Let $N$ be a one-dimensional $S^{1}$-homogeneous space.
	Then,
	for any $S^{1}$-invariant $(k+1)$-from $\omega$ on $N\times {\mathbb R}^{k}$,
	there exists a $k$-form $\mu$ on ${\mathbb R}^{k}$
	such that 
	we have an isomorphism
	\begin{eqnarray*}
		{\mathfrak X}_{\mu,c}({\mathbb R}^{k})
	\simeq  {\mathfrak X}_{\omega,c}(G/H\times {\mathbb R}^{k})^{S^{1}}.
	\end{eqnarray*}
\end{Lemma}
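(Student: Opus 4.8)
The plan is to obtain this as the special case $G = S^{1}$ of Lemma \ref{Lemma-isom-G-vect-inv-non-c}, the only extra work being the passage to compactly supported fields.

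First I would record that a one-dimensional $S^{1}$-homogeneous space $N = S^{1}/H$ is necessarily diffeomorphic to $S^{1}$: the closed subgroups of $S^{1}$ are $S^{1}$ itself (whose quotient is a point) and the finite cyclic subgroups (whose quotients are again one-dimensional circles). In particular $N$ is compact, and it carries an $S^{1}$-invariant $1$-form $\omega_{0}$ — under an identification $N \cong S^{1}$ one may take the standard invariant form $d\theta$, or equivalently average any volume form on $N$ over the compact group $S^{1}$. This is exactly the hypothesis required to invoke Lemma \ref{Lemma-isom-G-vect-inv-non-c} with $G = S^{1}$, $G/H = N$, and $n = \dim N = 1$: applied to the given $S^{1}$-invariant $(k+1)$-form $\omega$ on $N \times {\mathbb R}^{k}$, it produces a $k$-form $\mu$ on ${\mathbb R}^{k}$ for which the isomorphism \eqref{eq-isom-vect-G-inv} restricts to ${\mathfrak X}_{\mu}({\mathbb R}^{k}) \simeq {\mathfrak X}_{\omega}(N \times {\mathbb R}^{k})^{S^{1}}$.

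It remains to check that this isomorphism carries compactly supported vector fields to compactly supported vector fields. For $Y \in {\mathfrak X}({\mathbb R}^{k})$ the corresponding invariant field is $\widetilde{Y}_{(gH,x)} = \iota_{gH*}(Y_{x})$, and since each $\iota_{gH}$ is an embedding we have $\widetilde{Y}_{(gH,x)} = 0$ if and only if $Y_{x} = 0$; hence $\operatorname{supp}(\widetilde{Y}) = N \times \operatorname{supp}(Y)$. As $N$ is compact, $\operatorname{supp}(\widetilde{Y})$ is compact precisely when $\operatorname{supp}(Y)$ is, so \eqref{eq-isom-vect-G-inv} indeed restricts to ${\mathfrak X}_{\mu,c}({\mathbb R}^{k}) \simeq {\mathfrak X}_{\omega,c}(N \times {\mathbb R}^{k})^{S^{1}}$, which is the assertion. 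Every step is a citation of Lemma \ref{Lemma-isom-G-vect-inv-non-c} or an elementary observation, so I expect no genuine obstacle; the only point deserving a moment of care is this support identification, which uses exactly the compactness of the one-dimensional homogeneous space $N$.
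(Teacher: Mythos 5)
Your proposal is correct and follows essentially the same route as the paper: both deduce the statement from Lemma \ref{Lemma-isom-G-vect-inv-non-c} after noting that the one-dimensional homogeneous space $N$ admits an $S^{1}$-invariant $1$-form, and both observe that the isomorphism \eqref{eq-isom-vect-G-inv} preserves compact supports because $N$ (equivalently $S^{1}$) is compact. You merely spell out the two points the paper leaves as ``obvious'' --- that $N\cong S^{1}$ and that $\operatorname{supp}(\widetilde{Y})=N\times\operatorname{supp}(Y)$ --- which is a harmless elaboration, not a different argument.
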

\begin{proof}
We note that 
the isomorphism 
\eqref{eq-Lemma-isom-vect-G-inv}
preserves the compact support property
because $S^{1}$ is compact.
Thus,
	this is an immediate corollary of Lemma \ref{Lemma-isom-G-vect-inv-non-c}.
\end{proof}

\begin{Corollary}
\label{Cor-isom-X-inv}
Let $N$ be a one-dimensional $S^{1}$-homogeneous space,
and $X$ a vector field on $N\times {\mathbb R}^{k}$
induced by the $S^{1}$-action.
	Then,
	for any $S^{1}$-invariant $(k+1)$-from $\omega$ on $N\times {\mathbb R}^{k}$,
	there exists a $k$-form $\mu$ on ${\mathbb R}^{k}$
	such that 
	we have an isomorphism
\begin{equation}
	{\mathfrak X}_{\mu,c}({\mathbb R}^{k})
	\simeq   {\mathfrak X}_{\omega,c}(G/H\times {\mathbb R}^{k})^{X},
\end{equation}
where
we set
\begin{equation}
	 {\mathfrak X}_{\omega,c}(G/H\times {\mathbb R}^{k})^{X}
	 :=
	 \{
Y\in{\mathfrak X}_{\omega,c}(G/H\times {\mathbb R}^{k})\mid
[X,Y]=0
\}.
\end{equation}
\end{Corollary}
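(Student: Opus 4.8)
The plan is to obtain this as a short consequence of Lemma~\ref{Lemma-isom-S1-inv-vecto-div-c} together with the identity \eqref{eq-isom-vect-G-g-connect}, which says that for a \emph{connected} group $G$ one has $\mathfrak{X}(G/H\times{\mathbb R}^{k})^{G}=\mathfrak{X}(G/H\times{\mathbb R}^{k})^{\mathfrak g}$. Lemma~\ref{Lemma-isom-S1-inv-vecto-div-c} already furnishes a $k$-form $\mu$ on ${\mathbb R}^{k}$ together with an isomorphism $\mathfrak{X}_{\mu,c}({\mathbb R}^{k})\simeq\mathfrak{X}_{\omega,c}(N\times{\mathbb R}^{k})^{S^{1}}$, so the only thing left to check is that the right-hand spaces of the two statements coincide as subsets of $\mathfrak{X}(N\times{\mathbb R}^{k})$, i.e. that
\[
\mathfrak{X}_{\omega,c}(N\times{\mathbb R}^{k})^{S^{1}}
=
\mathfrak{X}_{\omega,c}(N\times{\mathbb R}^{k})^{X}.
\]

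To see this I would first invoke \eqref{eq-isom-vect-G-g-connect} with $G=S^{1}$, which is connected: a vector field on $N\times{\mathbb R}^{k}$ is $S^{1}$-invariant if and only if it commutes with every fundamental vector field coming from the Lie algebra $\mathfrak g$ of $S^{1}$. Since $\dim\mathfrak g=1$ and $X$ is the fundamental vector field of a nonzero element of $\mathfrak g$ --- nonzero because the $S^{1}$-action on the one-dimensional homogeneous space $N$ is nontrivial --- every element of $\mathfrak g$ is a constant multiple $cX$, and $[cX,Y]=c[X,Y]$. Hence commuting with all of $\mathfrak g$ is equivalent to commuting with $X$ alone, so that $\mathfrak{X}(N\times{\mathbb R}^{k})^{S^{1}}=\mathfrak{X}(N\times{\mathbb R}^{k})^{X}$. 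Intersecting both sides with the space of compactly supported, $\omega$-divergence-free vector fields gives the displayed equality, and combining it with the isomorphism from Lemma~\ref{Lemma-isom-S1-inv-vecto-div-c} yields the corollary.

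I do not expect a real obstacle here: the argument is essentially bookkeeping. The only point deserving a line of justification is the identification of $X$ with a generator of $\mathfrak g$, so that ``commutes with $X$'' and ``commutes with $\mathfrak g$'' are literally the same condition; the compatibility of compact supports under the isomorphism of Lemma~\ref{Lemma-isom-S1-inv-vecto-div-c} is already handled there using the compactness of $S^{1}$.
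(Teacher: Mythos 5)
Your proposal is correct and follows essentially the same route as the paper: it invokes \eqref{eq-isom-vect-G-g-connect} for the connected group $S^{1}$, identifies ${\mathfrak g}={\mathbb R}X$ so that commuting with ${\mathfrak g}$ is the same as commuting with $X$, and then concludes via Lemma \ref{Lemma-isom-S1-inv-vecto-div-c}. No gaps.
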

\begin{proof}
	Because $S^{1}$ is connected,
	we have
	\begin{eqnarray*}
		{\mathfrak X}_{\omega,c}(G/H\times {\mathbb R}^{k})^{S^{1}}
		=
		{\mathfrak X}_{\omega,c}(G/H\times {\mathbb R}^{k})^{\mathfrak g},
	\end{eqnarray*}
	where ${\mathfrak g}$ is the Lie algebra of $S^{1}$
	by \eqref{eq-isom-vect-G-g-connect}.
	Moreover,
	we have
	${\mathfrak g}={\mathbb R}X$
	because $S^{1}$ is one-dimensional.
	Thus,
	by definition,
	we have
	$$
	{\mathfrak X}_{\omega,c}(G/H\times {\mathbb R}^{k})^{\mathfrak g}
	=
	{\mathfrak X}_{\omega,c}(G/H\times {\mathbb R}^{k})^{X}.
	$$
	This completes the proof by
	Lemma \ref{Lemma-isom-S1-inv-vecto-div-c}.
\end{proof}

Recall that the evaluation map 
$\operatorname{ev}_{p}$
defined in \eqref{eq-def-ev}.

\begin{Corollary}
\label{Cor-surjection-ev-G-inv}
	Let $N$ be a one-dimensional $S^{1}$-homogeneous space,
	$\omega$ a $S^{1}$-invariant $(k+1)$-form on $N\times {\mathbb R}^{k}$,
	$X$ a vector field on $N\times {\mathbb R}^{k}$
	induced by the $S^{1}$-action.
	Then,
	we have
	\begin{eqnarray*}
		T_{p}(N\times {\mathbb R}^{k})
		=
		{\mathbb R}X
		+
		\operatorname{ev}_{p}
		\left(
		{\mathfrak X}_{\omega,c}(G/H\times {\mathbb R}^{k})^{X}
		\right)
	\end{eqnarray*}
	for any $p\in N\times {\mathbb R}^{k}$.
\end{Corollary}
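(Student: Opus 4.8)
The plan is to combine the structural isomorphism of Corollary \ref{Cor-isom-X-inv} with the surjectivity statement of Corollary \ref{Cor-surjection-Rk-vect-div}, after noting that the vector fields produced by the isomorphism \eqref{eq-isom-vect-G-inv} are ``vertical'' with respect to the projection $p:N\times{\mathbb R}^{k}\to N$; the extra summand ${\mathbb R}X$ is then exactly what is needed to supply the missing $N$-direction. First I would fix $P=(n_{0},x_{0})\in N\times{\mathbb R}^{k}$ and split $T_{P}(N\times{\mathbb R}^{k})=T_{n_{0}}N\oplus T_{x_{0}}{\mathbb R}^{k}$. Since $N$ is a one-dimensional homogeneous $S^{1}$-space its isotropy is discrete, so the $S^{1}$-action on $N$ has no fixed points and the induced vector field $X$ is nowhere vanishing on $N$; hence $T_{n_{0}}N={\mathbb R}X_{n_{0}}={\mathbb R}X_{P}$ and the splitting reads $T_{P}(N\times{\mathbb R}^{k})={\mathbb R}X_{P}\oplus\big(\{0\}\oplus T_{x_{0}}{\mathbb R}^{k}\big)$.

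Next I would invoke Corollary \ref{Cor-isom-X-inv}: there is a volume form $\mu$ on ${\mathbb R}^{k}$ for which the map \eqref{eq-isom-vect-G-inv}, $Y\mapsto\widetilde{Y}$, restricts to an isomorphism ${\mathfrak X}_{\mu,c}({\mathbb R}^{k})\simeq{\mathfrak X}_{\omega,c}(N\times{\mathbb R}^{k})^{X}$. By the very definition of $\widetilde{Y}$ we have $\widetilde{Y}_{P}=\iota_{n_{0}*}(Y_{x_{0}})$, which lies in the image of $\iota_{n_{0}*}$, that is, in $\{0\}\oplus T_{x_{0}}{\mathbb R}^{k}$ (equivalently $p_{*}\widetilde{Y}=0$ and $q_{*}\widetilde{Y}=Y$, as established in the proof of Lemma \ref{Lemma-isom-G-vect-inv-non-c}). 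Identifying $\{0\}\oplus T_{x_{0}}{\mathbb R}^{k}$ with $T_{x_{0}}{\mathbb R}^{k}$ via $\iota_{n_{0}*}$, this gives $\operatorname{ev}_{P}(\widetilde{Y})=\operatorname{ev}_{x_{0}}(Y)$, so $\operatorname{ev}_{P}\big({\mathfrak X}_{\omega,c}(N\times{\mathbb R}^{k})^{X}\big)$ coincides, inside the second summand, with $\operatorname{ev}_{x_{0}}\big({\mathfrak X}_{\mu,c}({\mathbb R}^{k})\big)$.

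Finally I would apply Corollary \ref{Cor-surjection-Rk-vect-div} to the volume form $\mu$ (this is where $k\geq 2$ is used, which is the relevant range since $k=\dim M-1\geq 2$ in all applications): $\operatorname{ev}_{x_{0}}:{\mathfrak X}_{\mu,c}({\mathbb R}^{k})\to T_{x_{0}}{\mathbb R}^{k}$ is surjective. Hence $\operatorname{ev}_{P}\big({\mathfrak X}_{\omega,c}(N\times{\mathbb R}^{k})^{X}\big)\supseteq\{0\}\oplus T_{x_{0}}{\mathbb R}^{k}$, and adding ${\mathbb R}X_{P}$ recovers all of $T_{P}(N\times{\mathbb R}^{k})$ by the splitting from the first step; the reverse inclusion is trivial. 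I expect the only subtle point to be bookkeeping of the two directions: the isomorphism of Corollary \ref{Cor-isom-X-inv} only ever produces vector fields tangent to the ${\mathbb R}^{k}$-factor, so it can never hit $X_{P}$ on its own, and the statement is formulated precisely so that the one-dimensional summand ${\mathbb R}X$ closes this gap.
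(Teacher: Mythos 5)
Your proposal is correct and follows essentially the same route as the paper: both use the evaluation identity $\operatorname{ev}_{p}(\widetilde{Y})=\iota_{gH*}(Y_{x})$ together with Corollaries \ref{Cor-isom-X-inv} and \ref{Cor-surjection-Rk-vect-div} to identify the image of $\operatorname{ev}_{p}$ with $\iota_{gH*}(T_{x}{\mathbb R}^{k})$, and then add the ${\mathbb R}X$ direction to fill out the tangent space. Your extra remark that $X$ is nowhere vanishing on $N$ (discrete isotropy) is a small useful justification of the decomposition that the paper leaves implicit.
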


\begin{proof}
Set $G:=S^{1}$
and write $N\simeq G/H$
and
$p=(gH,x)\in G/H\times {\mathbb R}^{k}$.
Then,
	for any $Y\in{\mathfrak X}({\mathbb R}^{k})$,
	we have
	\begin{eqnarray*}
		\operatorname{ev}_{p}(\widetilde{Y})
		&=&
		\widetilde{Y}_{(gH,x)}
		\\
		&=&
		\iota_{gH*}(Y_{x})
	\end{eqnarray*}
	by definition (see \eqref{eq-isom-vect-G-inv}).
	Thus,
	we have
	\begin{eqnarray*}
	\operatorname{ev}_{p}
		\left(
		{\mathfrak X}_{\omega,c}(G/H\times {\mathbb R}^{k})^{X}
		\right)
		&=&
		\iota_{gH*}
		\left(
		\operatorname{ev}_{x}({\mathfrak X}_{\mu,c}({\mathbb R}^{k}))
		\right)
		\\
		&=&
		\iota_{gH*}(T_{x}{\mathbb R}^{k})
	\end{eqnarray*}
	by Corollaries \ref{Cor-surjection-Rk-vect-div} and 
	\ref{Cor-isom-X-inv}.
Because 
\begin{eqnarray*}
	T_{p}(N\times {\mathbb R}^{k})
	=
	{\mathbb R}X
	+
	\iota_{gH*}
		\left(
		T_{x}{\mathbb R}^{k}
		\right),
\end{eqnarray*}
we have the corollary.
\end{proof}

\subsection{Proof of Lemma \ref{Lemma-existence-Y-nonzero}}
We rewrite Lemma \ref{Lemma-existence-Y-nonzero}
using the notation 
introduced in this section.
\begin{Lemma}
	Let $N$ be a a one-dimensional homogeneous $S^{1}$-space,
	$\omega$ be a $S^{1}$-invariant volume form on $N\times {\mathbb R}^{k}$,
	$X$ a vector field on $N\times {\mathbb R}^{k}$
	induced by the $S^{1}$-action,
	and $h$ a non-constant function on $N\times {\mathbb R}^{k}$
	satisfying $X(h)=0$.
	Then, if $k\geq 2$, there exists
	$Y\in {\mathfrak X}_{\omega,c}(N\times {\mathbb R}^{k})^{X}$ 
	such that $Y(h)\neq 0$.
\end{Lemma}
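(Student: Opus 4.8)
The plan is to reduce the statement to the surjectivity result of Corollary~\ref{Cor-surjection-ev-G-inv} by a one-point linear-algebra argument. First I would use that $h$ is non-constant and smooth to conclude that the open set $\{p\in N\times{\mathbb R}^{k}\mid dh_{p}\neq 0\}$ is nonempty, and fix any point $p$ in it. The hypothesis $X(h)=0$ gives $(Xh)(p)=dh_{p}(X_{p})=0$, so the (possibly vanishing) vector $X_{p}$ lies in the hyperplane $\ker dh_{p}\subsetneq T_{p}(N\times{\mathbb R}^{k})$; in particular ${\mathbb R}X_{p}\subseteq\ker dh_{p}$.

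Next I would apply Corollary~\ref{Cor-surjection-ev-G-inv} at this point, which yields $T_{p}(N\times{\mathbb R}^{k})={\mathbb R}X_{p}+\operatorname{ev}_{p}\bigl({\mathfrak X}_{\omega,c}(N\times{\mathbb R}^{k})^{X}\bigr)$. If every $Y\in{\mathfrak X}_{\omega,c}(N\times{\mathbb R}^{k})^{X}$ satisfied $\operatorname{ev}_{p}(Y)=Y_{p}\in\ker dh_{p}$, then, combining this with ${\mathbb R}X_{p}\subseteq\ker dh_{p}$, we would get $T_{p}(N\times{\mathbb R}^{k})\subseteq\ker dh_{p}$, contradicting $dh_{p}\neq 0$. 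Hence there exists $Y\in{\mathfrak X}_{\omega,c}(N\times{\mathbb R}^{k})^{X}$ with $dh_{p}(Y_{p})\neq 0$, i.e.\ $(Y(h))(p)\neq 0$, so $Y(h)\not\equiv 0$, which is exactly the assertion.

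The substantive content---that compactly supported, $\omega$-divergence-free, $X$-invariant vector fields evaluate onto a complement of ${\mathbb R}X$ at each point---is already packaged into Corollaries~\ref{Cor-surjection-Rk-vect-div} and \ref{Cor-surjection-ev-G-inv}, and it is precisely there that the hypothesis $k\geq 2$ is used, since on ${\mathbb R}^{1}$ a compactly supported divergence-free vector field vanishes identically. Consequently the only remaining step is the elementary argument above, and I do not expect any genuine obstacle once those corollaries are available; the one point that might look delicate---choosing $p$ compatibly with the $S^{1}$-action---is a non-issue, because $X(h)=0$ forces $X_{p}\in\ker dh_{p}$ whether or not $X_{p}=0$, so the argument runs at any point where $dh$ is nonzero.
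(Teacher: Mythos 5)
Your proposal is correct and follows essentially the same route as the paper: both arguments reduce the lemma to the pointwise surjectivity statement of Corollary \ref{Cor-surjection-ev-G-inv} (which is where $k\geq 2$ enters, via Corollary \ref{Cor-surjection-Rk-vect-div}). The paper phrases it contrapositively---if every such $Y$ killed $h$, then together with $X(h)=0$ all derivatives of $h$ would vanish, forcing $h$ to be constant---whereas you localize at a point where $dh_{p}\neq 0$; the two are logically identical.
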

\begin{proof}
We suppose that any
$Y\in {\mathfrak X}_{\omega,c}(N\times {\mathbb R}^{k})^{X}$
satisfies $Y(h)= 0$.
Then,
because $X(h)=0$,
$h$ is killed by all the derivatives
by Corollary \ref{Cor-surjection-ev-G-inv}.
This contradicts to the non-constant assumption of $h$.
Thus,
we have the lemma.
\end{proof}

\section{Diffeomorphism group and Misio{\l}ek curvature}
\label{Appendix-diffomorphism-and-MC}

In this section,
we recall
the theory of
diffeomorhphism groups
in the context of 
inviscid fluid flows
for the completeness.
Our main references are
\cite{EMa} and \cite{Mstability}.

Let $(M, g)$ be a compact $n$-dimensional Riemannian manifold without boundary
and ${\mathcal D}^{s}(M)$  
the group of Sobolev $H^{s}$ diffeomorphisms of $M$
and
${\mathcal D}_{\mu}^{s}(M)$
the subgroup of ${\mathcal D}^{s}(M)$
consisting volume preserving elements,
where $\mu$ is the volume form on $M$ defined by $g$.
If $s>1+\frac{n}{2}$,
the group ${\mathcal D}^{s}(M)$
can be 
given a structure of an infinite-dimensional weak Riemannian manifold
(see \cite{EMa})
and ${\mathcal D}^{s}_{\mu}(M)$ become its
weak Riemannian submanifold
(The term ``weak''
means that
the topology induced from the metric 
is weaker than
the original topology of
${\mathcal D}^{s}(M)$
or 
${\mathcal D}^{s}_{\mu}(M)$).
This weak Riemannian metric is given as follows:
The tangent space $T_{\eta}{\mathcal D}^{s}(M)$ 
of ${\mathcal D}^{s}(M)$ at a 
point $\eta\in{\mathcal D}^{s}(M)$ consists of all 
$H^{s}$ vector fields on $M$ which cover $\eta$, namely, 
all $H^{s}$ sections of the pullback bundle $\eta^{*}TM$.
Thus
for $x\in M$
and 
$V, W\in T_{\eta}{\mathcal D}^{s}(M)$,
we have 
$V(x),
W(x)
\in
T_{\eta(x)}M$.
Then we define an inner product on $T_{\eta}{\mathcal D}^{s}(M)$
by
\begin{eqnarray}
\langle V, W\rangle
:=
\int_{M}
g(V(x),W(x))\mu(x)
\label{L2metric}
\end{eqnarray}
and set $|V|:=\sqrt{\langle V, V\rangle}$.
Similarly,
$T_{\eta}{\mathcal D}^{s}_{\mu}(M)$ 
consists of all 
$H^{s}$ divergence-free vector fields on $M$ which cover 
$\eta\in{\mathcal D}_{\mu}^{s}(M)$.
Therefore, the metric \eqref{L2metric} induces a direct sum:
\begin{eqnarray}
T_{\eta}{\mathcal D}^{s}(M)=
T_{\eta}{\mathcal D}^{s}_{\mu}(M)
\oplus%_{L^{2}}
\{
({\rm grad} f)
\circ 
\eta
\mid
f\in
H^{s+1}(M)\},
\label{directsum}
\end{eqnarray}
which follows from the fact that
the gradient
is 
the adjoint of 
the negative divergence.
Let
\begin{eqnarray*}
	P_{\eta}&:& T_{\eta}{\mathcal D}^{s}(M)
	\to T_{\eta}{\mathcal D}^{s}_{\mu}(M)
	\\
	Q_{\eta}&:& T_{\eta}{\mathcal D}^{s}(M)
	\to 
	\{
({\rm grad} f)
\circ 
\eta
\mid
f\in
H^{s+1}(M)\}
\end{eqnarray*}
be the projection to the first and second components of \eqref{directsum},
respectively.
Moreover,
we write $e\in{\mathcal D}^{s}(M)$ for the 
identity element of 
${\mathcal D}^{s}(M)$.

The metric \eqref{L2metric}
also
induces
the
right invariant Levi-Civita connections $\bar{\nabla}$ and
$\widetilde{\nabla}$ on ${\mathcal D}^{s}(M)$ and
${\mathcal D}^{s}_{\mu}(M)$,
respectively.
This is defined as follows:
Let 
$V, W$ be vector fields on ${\mathcal D}^{s}(M)$.
We write
$V_{\eta}\in T_{\eta}{\mathcal D}^{s}(M)$
for
the value of 
$V$
at $\eta\in {\mathcal D}^{s}(M)$.
Then we have $V_{\eta}\circ \eta^{-1}, W_{\eta}\circ \eta^{-1}\in T_{e}{\mathcal D}^{s}(M)$,
namely,
$V_{\eta}\circ \eta^{-1}$ and $W_{\eta}\circ \eta^{-1}$ are vector fields on $M$.
Moreover,
we have
$W_{\eta}\circ \eta^{-1}$ is a vector field of class $C^{1}$ on $M$
by Sobolev embedding theorem
and the assumption $s>1+\frac{n}{2}$.
Thus we can consider $\nabla_{V_{\eta}\circ \eta^{-1}}W_{\eta}\circ \eta^{-1}$,
where $\nabla$ is the Levi-Civita connection on $M$.
Take a path $\varphi$ on ${\mathcal D}^{s}(M)$ satisfying
$\varphi(0)=\eta$ and $V_{\eta}=\partial_t\varphi(0)\in T_{\eta}{\mathcal D}_{\mu}^{s}(M)$,
then we define
\begin{eqnarray}
(\bar\nabla_{V}W)_{\eta}
&:=&
\frac{d}{dt}\left(
W_{\varphi(t)}\circ\varphi^{-1}(t)
\right)|_{t=0}\circ\eta+(\nabla_{V_{\eta}\circ\eta^{-1}}
W_{\eta}\circ\eta^{-1})\circ\eta.
\label{nab1}
\end{eqnarray}
Moreover, if $V$ and $W$ are vector fields on 
${\mathcal D}^{s}_{\mu}(M)$,
we define
\begin{eqnarray}
(\widetilde{\nabla}_{V}W)_{\eta}
&:=&P_\eta(\bar{\nabla}_{V}W)_{\eta}
\label{nab2}.
\end{eqnarray}
These definitions are independent of the particular choice of $\varphi(t)$.
We note that
$(\bar\nabla_{V}W)_{\eta}=(\bar\nabla_{V}W)_{e}\circ \eta$
if $V$ and $W$ are right invariant
vector fields on ${\mathcal D}^{s}(M)$
(i.e., $\bar{\nabla}$ is right invariant).
This is because
if $W$ is right invariant,
or equivalently, if $W$ satisfies $W_{\eta}=W_{e}\circ \eta$ for any $\eta\in{\mathcal D}^{s}_{\mu}(M)$,
the first term of \eqref{nab1} vanishes.

A geodesic joining the identity element $e\in {\mathcal D}^{s}_{\mu}(M)$ and
$p\in{\mathcal D}^{s}_{\mu}(M)$ 
can be obtained 
from
a variational principle as a stationary point of the energy function:
\begin{eqnarray}
E(\eta)^{t_{0}}_{0}:=\frac{1}{2}
\int_{0}^{t_{0}}
\left|
\dot{\eta}(t)
\right|^{2}dt,
\label{Eint}
\end{eqnarray}
where $\eta$ is a curve on ${\mathcal D}^{s}_{\mu}(M)$
satisfying $\eta(0)=e$ and $\eta(t_{0})=p$
and we set $\dot{\eta}(t):=\partial_{t}\eta(t)\in T_{\eta(t)}{\mathcal D}^{s}_{\mu}(M)$.
Let $\xi(r,t):(-\varepsilon,\varepsilon)\times[0,t_{0}]\to {\mathcal D}^{s}_{\mu}(M)$
be
a variation of a geodesic $\eta(t)$ with fixed end points,
namely,
it satisfies
$\xi(r,0) = \eta(0)$, $\xi(r,t_{0}) = \eta(t_{0})$ and $\xi(0,t)=\eta(t)$ for $t\in[0,t_{0}]$.
We sometimes write $\xi_{r}(t)$ for $\xi(r,t)$.
Let
$X(t):=\partial_{r}\xi(r,t)|_{r=0}\in T_{\eta(t)}{\mathcal D}^{s}_{\mu}(M)$ be the associated vector field
on ${\mathcal D}^{s}_{\mu}(M)$. 
Then 
the first and the second variations of the above integral are given by
\begin{alignat}{1}
0=
E'(\eta)_{0}^{t_{0}}(X)
=&
(X(t_{0}),\dot{\eta}(t_{0}))
-
(X(0),\dot{\eta}(0))\nonumber\\
&-
\int_{0}^{t_{0}}
(X(t),
\widetilde{\nabla}
_{\dot{\eta}(t)}\dot{\eta}(t)
)
dt,
\nonumber\\
E''(\eta)^{t_0}_{0}(X,X)
=&
\int_{0}^{t_{0}}
\{
(
\widetilde{\nabla}_{\dot{\eta}} X,
\widetilde{\nabla}_{\dot{\eta}} X)
-
(
\widetilde{R}_{\eta}
(X, \dot{\eta}
)\dot{\eta},
X)
\}dt.
\label{E''}
\end{alignat}

The reason why the geometry of ${\mathcal D}_{\mu}^{s}(M)$ 
is important
is 
that
geodesics in ${\mathcal D}^{s}_{\mu}(M)$ %with respect to $\widetilde{\nabla}$
correspond to inviscid fluid flows on $M$,
which was first remarked by V. I. Arnol'd \cite{A}.
This correspondence is accomplished in the following way:
If $\eta(t)$ is a geodesic on ${\mathcal D}^{s}_{\mu}(M)$ (i.e., $\widetilde{\nabla}_{\dot{\eta}}\eta=0$)
joining $e$ and $\eta(t_{0})$,
a time dependent vector field on $M$ defined by
$u(t):=\dot{\eta}(t)\circ \eta^{-1}(t)$ 
is a solution to the Euler equations on $M$:
\begin{align} \nonumber 
&\partial_tu + \nabla_{u} u = - \operatorname{grad}p
\qquad 
t \in [0,t_{0}], 
\\ 
\label{Epre} 
&\operatorname{div} u=0,
\\ 
\nonumber 
&u|_{t=0} = \dot{\eta}(0), 
\end{align} 
with a scalar  function (pressure) $p(t)$ determined by $u(t)$.
Here
$\operatorname{grad}p$ (resp. $\operatorname{div}u$) is the gradient (resp.\:divergent) of $p$ (resp.\:$u$) with respect to the Riemannian metric $g$ of $M$.
In this context,
the existence of conjugate points along a geodesic $\eta$ corresponds to the Lagrangian stability of a fluid flow $u=\dot{\eta}\circ\eta^{-1}$.

In this context,
G. Misio{\l}ek essentially established
a criterion  
for the existence of a conjugate point
on a geodesic corresponding to a stationary solution of 
in \cite{MconjT2},
which we call M-criterion.
\begin{Definition}[{\cite{MconjT2}, \cite[(2.14)]{TY1}, \cite[Lem.~B.6]{TY2}}]
	Let $u$ be a stationary solution of \eqref{Epre}
and $v$ a divergence-free vector field on $M$.
Then,
the Misio{\l}ek curvature ${\mathfrak m}{\mathfrak c}_{u,v}$ is defined by
\begin{eqnarray*}
	{\mathfrak m}{\mathfrak c}_{u,v}
	&:=&
	\langle \nabla_{u}[u,v]+
	\nabla_{[u,v]}v,v\rangle
	\\
	&=&
	-|[u,v]|^{2}
	-
	\langle
	u,
	[[u,v],v]
	\rangle.
\end{eqnarray*}
\end{Definition}

\begin{Fact}[{\cite[Lems.~2 and 3]{MconjT2}, \cite[Fact.~1.1]{TY1}}]
\label{Fact-M-criterion}
Let $M$ be a compact $n$-dimensional 
Riemannian manifold without boundary
and $s>2+\frac{n}{2}$.
Suppose that $u\in T_{e}{\mathcal D}^{s}_{\mu}(M)$
is a stationary solution of \eqref{Epre} on $M$
and take a geodesic $\eta(t)$
on ${\mathcal D}^{s}_{\mu}(M)$
satisfying $u=\dot{\eta}\circ\eta$.
Then,
if $v\in T_{e}{\mathcal D}^{s}_{\mu}(M)$ satisfies
${\mathfrak m}{\mathfrak c}_{u,v}>0$,
then,
there exists a point conjugate to $e\in{\mathcal D}^{s}_{\mu}(M)$
along $\eta(t)$
on $0\leq t\leq t^{*}$
for some $t^{*}>0$.
\end{Fact}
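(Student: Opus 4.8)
The plan is to derive the conjugate point from the second variation formula \eqref{E''} by exhibiting, for every sufficiently large $t_{0}$, a proper variation field along $\eta$ on which the index form is negative, and then to invoke Morse index theory on ${\mathcal D}^{s}_{\mu}(M)$. Fix the geodesic $\eta$ corresponding to the stationary solution $u$ (so that $\dot{\eta}(t)=u\circ\eta(t)$), which exists since $u$ solves \eqref{Epre}. I will construct $X(t)\in T_{\eta(t)}{\mathcal D}^{s}_{\mu}(M)$ with $X(0)=X(t_{0})=0$ and $E''(\eta)^{t_{0}}_{0}(X,X)<0$; by the index theorem this forces a point conjugate to $e$ with parameter in $(0,t_{0}]$, and we take $t^{*}=t_{0}$.

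For the test field, note that since $v$ is divergence-free and each $\eta(t)$ preserves $\mu$, the right translation $\widetilde{v}(t):=v\circ\eta(t)$ lies in $T_{\eta(t)}{\mathcal D}^{s}_{\mu}(M)$, is of class $H^{s}$ by Sobolev composition, and by \eqref{nab1}, \eqref{nab2}, the right invariance of $\bar{\nabla}$, and the right invariance of the decomposition \eqref{directsum} one gets $\widetilde{\nabla}_{\dot{\eta}}\widetilde{v}=\bigl(P(\nabla_{u}v)\bigr)\circ\eta$. Given a smooth $\phi$ with $\phi(0)=\phi(t_{0})=0$, put $X(t):=\phi(t)\widetilde{v}(t)$ (realized, e.g., by the variation $\xi(r,t)=\psi_{r\phi(t)}\circ\eta(t)$ with $\psi$ the flow of $v$, hence an admissible proper variation field). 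Then $\widetilde{\nabla}_{\dot{\eta}}X=\dot{\phi}\,\widetilde{v}+\phi\,\bigl(P\nabla_{u}v\bigr)\circ\eta$, and since $\int_{M}u(\|v\|^{2})\mu=0$ by Stokes' theorem (using $\operatorname{div}u=0$) the mixed term drops, leaving
\[
|\widetilde{\nabla}_{\dot{\eta}}X|^{2}=|v|^{2}\,\dot{\phi}(t)^{2}+|P\nabla_{u}v|^{2}\,\phi(t)^{2}.
\]

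The crucial step is the curvature term $\langle\widetilde{R}_{\eta}(X,\dot{\eta})\dot{\eta},X\rangle=\phi(t)^{2}\langle\widetilde{R}_{e}(v,u)u,v\rangle$, which I would treat via Arnold's explicit formula for the curvature of the $L^{2}$ metric on ${\mathcal D}^{s}_{\mu}(M)$ (see \cite{Mstability}, \cite{EMa}). Combined with the stationarity of $u$ (so $P\nabla_{u}u=0$) and several integrations by parts on $M$, this yields
\[
\langle\widetilde{R}_{e}(v,u)u,v\rangle=|P\nabla_{u}v|^{2}+{\mathfrak m}{\mathfrak c}_{u,v},
\]
equivalently that the reduced Jacobi operator $v\mapsto P\nabla_{u}(P\nabla_{u}v)+\widetilde{R}_{e}(v,u)u$ equals $P\bigl(\nabla_{u}[u,v]+\nabla_{[u,v]}v\bigr)$; this is in essence the computation of \cite[Lems.~2 and 3]{MconjT2}. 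Substituting this and the previous display into \eqref{E''}, the $|P\nabla_{u}v|^{2}$ terms cancel and, since ${\mathfrak m}{\mathfrak c}_{u,v}$ is a constant,
\[
E''(\eta)^{t_{0}}_{0}(X,X)=\int_{0}^{t_{0}}\bigl(|v|^{2}\,\dot{\phi}(t)^{2}-{\mathfrak m}{\mathfrak c}_{u,v}\,\phi(t)^{2}\bigr)\,dt.
\]
Choosing $\phi(t)=\sin(\pi t/t_{0})$ gives $E''(\eta)^{t_{0}}_{0}(X,X)=\frac{\pi^{2}}{2t_{0}}|v|^{2}-\frac{t_{0}}{2}{\mathfrak m}{\mathfrak c}_{u,v}$, which is strictly negative for every $t_{0}>\pi|v|/\sqrt{{\mathfrak m}{\mathfrak c}_{u,v}}$, by the hypothesis ${\mathfrak m}{\mathfrak c}_{u,v}>0$.

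It remains to pass from negativity of the index form to a conjugate point. Fix such a $t_{0}$; if no point of $(0,t_{0}]$ were conjugate to $e$ along $\eta$, then the Jacobi fields vanishing at $t=0$ would span $T_{\eta(t)}{\mathcal D}^{s}_{\mu}(M)$ for each $t\in(0,t_{0}]$, whence $E''(\eta)^{t_{0}}_{0}$ would be positive definite on proper variation fields, contradicting the display above. This easy implication of the Morse index theorem remains valid in the present weak infinite-dimensional setting because ${\mathcal D}^{s}_{\mu}(M)$ is a smooth Hilbert manifold whose $L^{2}$ exponential map is smooth (Ebin--Marsden, \cite{EMa}) and, for $s$ large, a nonlinear Fredholm map of index zero, so that injectivity of the differential of the exponential map along the geodesic ray implies bijectivity there; compare \cite{MconjT2}, \cite{Mstability}. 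Hence there is a point conjugate to $e$ along $\eta$ with parameter in $(0,t_{0}]\subset[0,t^{*}]$. The main obstacle is the curvature identity: verifying it is where the real work lies, requiring Arnold's curvature formula for the weak $L^{2}$ metric together with careful use of the stationarity of $u$ and of Stokes' theorem; a secondary subtlety is the functional-analytic justification of the final step, which rests on the Fredholm property of the $L^{2}$ exponential map rather than on naive Morse theory.
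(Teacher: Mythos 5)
The paper itself offers no proof of this statement: it is quoted as a known Fact with references to \cite{MconjT2} and \cite{TY1}, so your reconstruction can only be measured against those sources, and it matches them. Your argument is exactly the standard one: the proper test field $\phi(t)\,v\circ\eta(t)$, the vanishing of the cross term via $\operatorname{div}u=0$, the identity $\langle\widetilde{R}_{e}(v,u)u,v\rangle=|P\nabla_{u}v|^{2}+{\mathfrak m}{\mathfrak c}_{u,v}$ (which is the content of \cite[Lem.~2]{MconjT2} and which you correctly identify as the real work), and the choice $\phi(t)=\sin(\pi t/t_{0})$; all the computations you do carry out are correct. The one inaccurate assertion is in your last paragraph: the $L^{2}$ exponential map on ${\mathcal D}^{s}_{\mu}(M)$ is a nonlinear Fredholm map of index zero when $\dim M=2$, but this is known to \emph{fail} in general for $\dim M\geq 3$ (Ebin--Misio{\l}ek--Preston; see also \cite{Preston-conj}), which is precisely the dimension this paper cares about. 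Fortunately your conclusion does not need Fredholmness: with conjugate points understood as parameters where $d\exp_{e}(tu)$ fails to be an isomorphism, the absence of such points on $(0,t_{0}]$ already forces positive semi-definiteness of the index form on proper variation fields by the usual Jacobi-field substitution, which is all your contradiction requires and is how \cite[Lem.~3]{MconjT2} is actually established.
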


\end{document}